\newtheorem{theorem}{Theorem}[section]
\newtheorem{lemma}{Lemma}[section]
\newtheorem{proposition}{Proposition}[section]
\newtheorem{corollary}{Corollary}[section]
\newtheorem{definition}{Definition}[section]
\newtheorem{remark}{Remark}[section]
\begin{document}
\title{Inhomogeneous microlocal propagation of singularities in Fourier Lebesgue spaces}
\author{Gianluca Garello, Alessandro Morando}
\date{ \ }
\maketitle
\begin{abstract}
Some results of microlocal continuity for pseudodifferential operators whose non regular symbols belong to weighted Fourier Lebesgue spaces are given.\\
Inhomogeneous local and microlocal propagation of singularities of Fourier Lebesgue type are then  studied, with applications to some classes of semilinear equations.
\end{abstract}
\section{Introduction}\label{INT}
Consider the general nonlinear partial differential equation
\begin{equation}\label{int_eq_1}
F\left(x, \{\partial ^\alpha u\}_{\vert \alpha\vert \leq m}\right)=0,
\end{equation} 
where $F(x,\zeta)\in C^\infty (\mathbb R^n\times \mathbb C^N)$ for suitable positive integer $N$.\\
In order to investigate local and microlocal regularity of the solutions, it is quite natural to reduce the study to the linearized equation, obtained by differentiation with respect to $x_j$
\begin{equation}\label{int_eq_2}   
\sum_{\vert \alpha\vert\leq m}\frac{\partial F}{\partial \zeta^\alpha} \left(x, \{\partial^\beta u\}_{\vert \beta\vert\leq m}\right) \partial ^\alpha\partial_{x_j}u=-\frac{\partial F}{\partial x_j}\left(x, \{\partial^\beta u\}_{\vert \beta\vert\leq m}\right).
\end{equation}
Notice that the regularity of the  coefficients $a_\alpha(x)=\dfrac{\partial F}{\partial \zeta^\alpha} \left(x, \{\partial^\beta u\}_{\vert \beta\vert\leq m}\right)$, depends on the  solution $u$ and the function $F(x,\zeta)$. We need then to study as first step the algebra properties in the function spaces in which we are intended to operate, as well as the behaviour of the pseudodifferential operators with symbols in such spaces.
\\
When working in H\"older spaces and Sobolev spaces $H^{s,2}$, we can refer  to the paradifferential calculus, developed by J.M. Bony and Y. Meyer, \cite{BO1}, \cite{ME1}, \cite{TA1}. Generalizations of these arguments to the symbols of quasi homogeneous, or completely inhomogeneous type can be found  in \cite{YA1}, \cite{YA2}, \cite{G}, \cite{G0}, \cite{G3}.\\
In this paper we fix the attention on pseudodifferential operators with symbols in weighted Fourier Lebesgue spaces $\mathcal F L^p_\omega(\mathbb R^n)$, following an idea of S. Pilipovi\'c - N. Teofanov - J. Toft, \cite{PTT2}, \cite{PTT1}.\\ Passing now to consider  the microlocal regularity properties, let us notice that the H\"ormander wave front set, introduced in \cite{HOR1} for smooth singularities and extended to the Sobolev spaces $H^{s,2}$ in \cite{HOR_hyp}, uses as basic tool the conic neighborhoods in $\mathbb R^n\setminus\{0\}$. 
Thus the homogeneity properties  of the symbol $p(x,\xi)$ and the characteristic set Char $P$ of the (pseudo) differential operator  $P=p(x,D)$, play a key role. In order to better adapt the study to a wider class of equations, starting from the fundamental papers of R. Beals \cite{BE1}, L. H\"ormander \cite{HO3}, an extensive literature about weighted pseudodifferential operators has been developed, see e.g. \cite{YA1}, \cite{CM1}, \cite{LR1}, \cite{BBR}.\\
 We are particularly interested here in the generalizations of the wave front set  not involving the use of conic neighborhoods and consequently the homogeneity properties. In some cases, for example in the study of propagation of singularities of the Schroedinger operator, $i\partial_t-\Delta$, we can use the quasi-homogeneous wave front set, introduced in \cite{LA1}, see further  \cite{SA1}, \cite{YA2}. More generally, failing of any homogeneity properties, the propagation of the  microlocal singularities are described in terms of filter of neighborhoods, introduced in \cite{RO1} and further developed in \cite{GA}, \cite{G}, \cite{G0}, \cite{G1}, \cite{G3}, \cite{GM2.1}, \cite{GM2.2} \cite{GM3}.\\
 In  some previous works of the authors continuity and microlocal properties are considered in Sobolev spaces in $L^p$ setting, see \cite{GM1}, \cite{GM2}, \cite{GMqh2}, \cite{GM2.1}, \cite{GM2.2}, \cite{GM3}.\\
 In the present paper we prove a result of propagation of singularities of Fourier Lebesgue type, for partial (pseudo) differential equations, whose symbol satisfies generalized elliptic properties. Namely we obtain an extension of the well known propagation of singularities given by H\"ormander \cite{HOR_hyp} for the Sobolev wave front set $WF_{H^{s,2}}$ and operators of order $m$:
 \begin{equation*}
  WF_{H^{s-m,2}}(Pf)\subset WF_{H^{s,2}}(f)\subset WF_{H^{s-m,2}}(Pf)\cup {\rm Char}\, (P),
\end{equation*}   
 given in terms of filter of microlocal singularities and quasi-homogeneous wave front set.\\
 Applications to semilinear partial differential equations are given at the end.
 The plan of the paper is the following: in \S \ref{prel_sct} the weight funtions $\omega$ and the Fourier Lebesgue spaces $\mathcal F L^p_\omega(\mathbb R^n)$ are introduced and their properties studied. In \S \ref{algebra_sct}, \S \ref{w_FL_pdo_sct}, under suitable additional conditions on the weight function, algebra properties in $\mathcal F L^p_\omega(\mathbb R^n)$ and continuity of pseudodifferential operators with symbols in Fourier Lebesgue spaces are studied. The microlocal regularity, in terms of inhomogeneous neighborhoods, is introduced and studied. In \S \ref{micro_w_FL_sct} the microlcal properties of Fourier Lebesgue spaces are defined, while the propagation of microlocal singularities is given in \S \ref {appl_sct}, namely in Proposition \ref{filter_inclusions}. In \S \ref {appl_sct}  applications to semilinear equations are studied, with specific examples in the field of quasi-homogeneous partial differential equations.

\section{Preliminaries}\label{prel_sct}
\subsection{Weight functions}\label{wf_sct}
Throughout the paper, we call {\it weight function} any positive measurable map $\omega:\mathbb R^n\rightarrow ]0,+\infty[$ satisfying the following {\it temperance} condition
\begin{equation*}
(\mathcal T)\qquad\qquad\qquad\omega(\xi)\le C(1+\vert\xi-\eta\vert)^N\omega(\eta)\,,\qquad\forall\,\xi\,,\eta\in\mathbb R^n\,,\qquad\qquad\qquad\,\,\,
\end{equation*}
for suitable positive constants $C$ and $N$.

In the current literature, a positive function $\omega$ obeying condition ($\mathcal T$) is said to be either {\it temperated} (see  \cite{G3}, \cite{HOR1}) or, in the field of Modulation Spaces, {\it polynomially moderated} (cf. \cite{F1}, \cite{GRO1},  \cite{PTT1}, \cite{PTT2}). 

For $\omega, \omega_1$  weight functions; we write $\omega\preceq\omega_1$ to mean that, for some $C>0$
\begin{equation*}
\omega(x)\le C\omega_1(x)\,,\qquad\forall\,x\in\mathbb R^n\,;
\end{equation*}

moreover we say that  $\omega, \omega_1$ are {\it equivalent}, writing $\omega \asymp\omega_1$ in this case, if
\begin{equation}\label{equiv}
\omega\preceq\omega_1\qquad\mbox{and}\qquad\omega_1\preceq\omega\,.
\end{equation}

Applying ($\mathcal T$) it yields at once that
$\omega(\xi)\le C(1+\vert\xi\vert)^N\omega(0)\,$ and
$\omega(0)\le C(1+\vert-\xi\vert)^N\omega(\xi)=C(1+\vert\xi\vert)^N\omega(\xi)\,$, for any $\xi\in\mathbb R^n$.

Thus, for every weight function $\omega$ there exist constants $C\ge 1$ and $N>0$ such that
\begin{equation}\label{pg_cond}
\frac1{C}(1+\vert\xi\vert)^{-N}\le\omega(\xi)\le C(1+\vert\xi\vert)^N\,,\qquad\forall\,\xi\in\mathbb R^n\,.
\end{equation}
\begin{proposition}\label{pm_prop1}
Let $\omega, \omega_1$ be two weight functions and $s\in\mathbb R$. Then $\omega\omega_1$, $1/\omega$ and $\omega^s$ are again weight functions.
\end{proposition}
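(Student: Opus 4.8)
The plan is to verify that each of the three candidate functions satisfies the temperance condition $(\mathcal{T})$, since this is the defining property of a weight function. All three are clearly positive and measurable (products, reciprocals, and real powers of positive measurable functions remain positive and measurable), so the only substantive point is the polynomial-moderation estimate. I would treat the three cases in order of increasing delicacy.

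First I would handle the product $\omega\omega_1$. Writing the temperance inequalities for $\omega$ and $\omega_1$ separately, with respective constants $C,N$ and $C_1,N_1$, I would multiply them: from $\omega(\xi)\le C(1+|\xi-\eta|)^N\omega(\eta)$ and $\omega_1(\xi)\le C_1(1+|\xi-\eta|)^{N_1}\omega_1(\eta)$ one gets immediately
\begin{equation*}
\omega(\xi)\omega_1(\xi)\le CC_1(1+|\xi-\eta|)^{N+N_1}\omega(\eta)\omega_1(\eta),
\end{equation*}
so $\omega\omega_1$ obeys $(\mathcal{T})$ with constants $CC_1$ and $N+N_1$. This case is essentially immediate.

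Next I would treat $1/\omega$. The key observation is that $(\mathcal{T})$ is symmetric in a usable way: applying it with the roles of $\xi$ and $\eta$ exchanged gives $\omega(\eta)\le C(1+|\eta-\xi|)^N\omega(\xi)=C(1+|\xi-\eta|)^N\omega(\xi)$, and taking reciprocals of this inequality yields $\frac{1}{\omega(\xi)}\le C(1+|\xi-\eta|)^N\frac{1}{\omega(\eta)}$, which is exactly $(\mathcal{T})$ for $1/\omega$ with the same constants $C,N$. For the power $\omega^s$, I would split into the cases $s\ge 0$ and $s<0$. For $s\ge 0$, raising the temperance inequality for $\omega$ to the power $s$ is legitimate because both sides are positive and $t\mapsto t^s$ is monotone increasing, giving constants $C^s$ and $Ns$; for $s<0$, I would first pass to $1/\omega$ (already shown to be a weight) and then raise $(1/\omega)^{-s}$ to the positive power $-s$, reducing to the previous subcase. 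I expect no genuine obstacle here: the one point that deserves a word of care is the legitimacy of exponentiating an inequality, which is justified precisely because every weight function is strictly positive and the bounding factors $(1+|\xi-\eta|)^N$ are at least $1$, so monotonicity of the power map applies cleanly throughout.
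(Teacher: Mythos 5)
Your proposal is correct and follows essentially the same route as the paper's proof: multiply the two temperance inequalities for the product, rearrange (or swap $\xi$ and $\eta$ in) the temperance inequality to handle $1/\omega$, and treat $\omega^s$ by raising to the power $s\ge 0$ directly and reducing $s<0$ to the identity $\omega^s=(1/\omega)^{-s}$. Your extra remark about the legitimacy of exponentiating the inequality via monotonicity of $t\mapsto t^s$ on positive reals is a harmless elaboration of a step the paper leaves implicit.
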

\begin{proof}
Assume that for suitable constants $C, C_1, N, N_1$
\begin{equation}\label{PMomega1}
\begin{split}
&\omega(\xi)\le C(1+\vert\xi-\eta\vert)^N\omega(\eta)\,,\\
&\omega_1(\xi)\le C_1(1+\vert\xi-\eta\vert)^{N_1}\omega(\eta)\qquad\forall\,\xi\,,\eta\in\mathbb R^n\,.
\end{split}
\end{equation}
Then we deduce that
\begin{equation*}
\begin{split}
&\omega(\xi)\omega_1(\xi)\le CC_1(1+\vert\xi-\eta\vert)^{N+N_1}\omega(\eta)\omega_1(\eta)\,,\\
&1/\omega(\eta)\le C(1+\vert\eta-\xi\vert)^N 1/\omega(\xi)\,,\qquad\forall\,\xi\,,\eta\in\mathbb R^n\,,
\end{split}
\end{equation*}
which show that $\omega\omega_1$ and $1/\omega$ are temperate.

If $s\ge 0$ then condition ($\mathcal{T}$) for $\omega^s$ follows at once from \eqref{PMomega1}. If $s<0$ it suffices to observe that $\omega^s=1/\omega^{-s}$ and then combining the preceding results.
\end{proof}
We introduce now some further onditions on the weight function $\omega$ which will be repeatedly used in the following.
\begin{itemize}
\item[($\mathcal{SV}$)] {\bf Slowly varying condition}: there exist positive constants $C\ge 1$, $N$ such that
\begin{equation}\label{sv_cond}
\frac1{C}\le\frac{\omega(\eta)}{\omega(\xi)}\le C\,,\qquad\mbox{when}\,\,\vert\eta-\xi\vert\le\frac1{C}\omega(\xi)^{1/N}\,;
\end{equation}
\item[($\mathcal{SA}$)] {\bf Sub additive condition}: for some positive constant $C$
\begin{equation}\label{ring_cond}
\omega(\xi)\le C\left\{\omega(\xi-\eta)+\omega(\eta)\right\}\,,\qquad\forall\,\xi\,,\eta\in\mathbb R^n\,;
\end{equation}
\item[($\mathcal{SM}$)] {\bf Sub multiplicative condition}: for some positive constant $C$
\begin{equation}\label{subm_cond}
\omega(\xi)\le C\omega(\xi-\eta)\omega(\eta)\,,\qquad\forall\,\xi\,,\eta\in\mathbb R^n\,;
\end{equation}
\item[($\mathcal{G}$)] {\bf $\delta$ condition}: for some positive constants $C$ and $0<\delta<1$
\begin{equation}\label{delta_cond}
\omega(\xi)\le C\left\{\omega(\eta)\omega(\xi-\eta)^\delta+\omega(\eta)^\delta\omega(\xi-\eta)\right\}\,,\qquad\forall\,\xi\,,\eta\in\mathbb R^n\,;
\end{equation}
\item[($\mathcal{B}$)] {\bf Beurling's condition}: for some positive constant $C$
\begin{equation}\label{B_cond}
\sup\limits_{\xi\in\mathbb R^n}\int_{\mathbb R^n}\frac{\omega(\xi)}{\omega(\xi-\eta)\omega(\eta)}\,d\eta\le C\,.
\end{equation}
\end{itemize}

For a thorough account on the relations between the properties introduced above, we refer to \cite{G3}. For reader's convenience, here we quote and prove only the following result.

\begin{proposition}\label{wf_relationship}
For the previous conditions the following relationships are true.
\begin{itemize}
\item[i.] Assume that $\omega$ is uniformly bounded from below in $\mathbb R^n$, that is
\begin{equation}\label{bb_cond}
\inf_{\xi\in\mathbb R^n}\omega(\xi)=c >0\,.
\end{equation}
Then
$(\mathcal{SV})\,\,\Rightarrow\,\,(\mathcal{T})$ and $(\mathcal{G})\,\,\Rightarrow\,\,(\mathcal{SM})$.
\item[ii.] Assume that
\begin{equation}\label{integrability}
\frac{1}{\omega}\in L^1(\mathbb R^n)\,.
\end{equation}
Then $(\mathcal{SA})\,\,\Rightarrow\,\,(\mathcal{B})$ and $(\mathcal{G})\,\,\Rightarrow\,\,\omega^{\frac1{1-\delta}}\,\,\mbox{satisfies}\,\,(\mathcal{B})$.
\end{itemize}
\end{proposition}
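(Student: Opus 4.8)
My plan is to establish the four implications one by one, first isolating the only subtle one, $(\mathcal{SV})\Rightarrow(\mathcal{T})$, and then disposing of the remaining three by elementary pointwise manipulations of the defining inequalities combined with the standing hypotheses \eqref{bb_cond} and \eqref{integrability}.

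For $(\mathcal{SV})\Rightarrow(\mathcal{T})$ I would connect two arbitrary points $\eta,\xi\in\mathbb R^n$ by a finite chain $\eta=\zeta_0,\zeta_1,\dots,\zeta_k=\xi$ lying on the segment joining them, where each step $\zeta_{j+1}-\zeta_j$ points toward $\xi$ with length at most the admissible radius $\frac1C\omega(\zeta_j)^{1/N}$ of \eqref{sv_cond}. The lower bound \eqref{bb_cond} keeps this radius bounded below by $\frac1C c^{1/N}>0$, so the chain is finite, and at each node \eqref{sv_cond} gives $\omega(\zeta_{j+1})\le C\,\omega(\zeta_j)$. A crude iteration only yields a constant exponential in $|\xi-\eta|$; the gain comes from observing that the step length at $\zeta_j$ is itself proportional to $\omega(\zeta_j)^{1/N}$, so that the quantity $\omega^{1/N}$ increases at most \emph{linearly} along the chain. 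Summing the increments one obtains $\omega(\xi)^{1/N}\le C'\big(\omega(\eta)^{1/N}+|\xi-\eta|\big)$; dividing by $\omega(\eta)^{1/N}\ge c^{1/N}$ then absorbs the distance term and produces $\omega(\xi)\le C''(1+|\xi-\eta|)^N\omega(\eta)$, i.e. $(\mathcal{T})$. This linear-growth estimate for $\omega^{1/N}$ is the main obstacle; once it is in place everything reduces to bookkeeping.

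The implication $(\mathcal{G})\Rightarrow(\mathcal{SM})$ is immediate: writing $\omega(\xi-\eta)^\delta=\omega(\xi-\eta)\,\omega(\xi-\eta)^{\delta-1}$ and $\omega(\eta)^\delta=\omega(\eta)\,\omega(\eta)^{\delta-1}$ in \eqref{delta_cond}, and using that $\delta-1<0$ together with \eqref{bb_cond} to bound $\omega(\cdot)^{\delta-1}\le c^{\delta-1}$, the right-hand side is dominated by $2C\,c^{\delta-1}\,\omega(\eta)\omega(\xi-\eta)$, which is $(\mathcal{SM})$. Likewise $(\mathcal{SA})\Rightarrow(\mathcal{B})$ follows by dividing \eqref{ring_cond} by $\omega(\xi-\eta)\omega(\eta)$ to get $\frac{\omega(\xi)}{\omega(\xi-\eta)\omega(\eta)}\le C\big(\frac1{\omega(\eta)}+\frac1{\omega(\xi-\eta)}\big)$ and integrating in $\eta$; by translation invariance of Lebesgue measure and \eqref{integrability} the integral is at most $2C\,\|1/\omega\|_{L^1}$, uniformly in $\xi$.

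Finally, for the implication $(\mathcal{G})\Rightarrow\omega^{1/(1-\delta)}$ satisfies $(\mathcal{B})$, set $p:=\frac1{1-\delta}\ge1$ and $\tilde\omega:=\omega^{p}$, which is again a weight function by Proposition \ref{pm_prop1}. Raising \eqref{delta_cond} to the power $p$ and using $(a+b)^p\le 2^{p-1}(a^p+b^p)$ gives a bound for $\omega(\xi)^p$ in which the exponents are tuned so that $\delta p-p=-1$; dividing by $\tilde\omega(\xi-\eta)\tilde\omega(\eta)=\omega(\xi-\eta)^p\omega(\eta)^p$ therefore collapses the two terms to $\frac1{\omega(\xi-\eta)}$ and $\frac1{\omega(\eta)}$ respectively. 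One is thus reduced exactly to the situation of the previous case, and integrating in $\eta$ with \eqref{integrability} shows that $\tilde\omega$ satisfies $(\mathcal{B})$.
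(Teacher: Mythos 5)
Your proofs of the last three implications are correct and essentially identical to the paper's: $(\mathcal{G})\Rightarrow(\mathcal{SM})$ via the pointwise bound $\omega(\cdot)^{\delta-1}\le c^{\delta-1}$; $(\mathcal{SA})\Rightarrow(\mathcal{B})$ by dividing \eqref{ring_cond} by $\omega(\xi-\eta)\omega(\eta)$ and integrating in $\eta$ using translation invariance and \eqref{integrability}; and the last implication by the exponent arithmetic $p(\delta-1)=-1$ with $p=\frac{1}{1-\delta}$ --- the paper merely divides \eqref{delta_cond} by $\omega(\xi-\eta)\omega(\eta)$ first and raises to the power $\frac{1}{1-\delta}$ afterwards, which is the same computation in a different order.

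Where you genuinely diverge is $(\mathcal{SV})\Rightarrow(\mathcal{T})$, which you treat as the hard step; the paper dispatches it with a two-case dichotomy and no iteration at all. If $\vert\xi-\eta\vert\le\frac1C\omega(\xi)^{1/N}$, then \eqref{sv_cond} applied at the base point $\xi$ gives directly $\omega(\xi)\le C\omega(\eta)\le C(1+\vert\xi-\eta\vert)^N\omega(\eta)$; if instead $\vert\xi-\eta\vert>\frac1C\omega(\xi)^{1/N}$, then by the very failure of the smallness condition $\omega(\xi)<C^N\vert\xi-\eta\vert^N\le\frac{C^N}{c}(1+\vert\xi-\eta\vert)^N\omega(\eta)$, the lower bound \eqref{bb_cond} being used only in this far regime. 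Your chaining argument along the segment is also correct: since each full step has length exactly $\frac1C\omega(\zeta_j)^{1/N}$, the multiplicative bound $\omega(\zeta_{j+1})^{1/N}\le C^{1/N}\omega(\zeta_j)^{1/N}$ becomes the additive one $\omega(\zeta_{j+1})^{1/N}\le\omega(\zeta_j)^{1/N}+(C^{1/N}-1)C\vert\zeta_{j+1}-\zeta_j\vert$, and the increments telescope to the sub-Lipschitz estimate $\omega(\xi)^{1/N}\le C'\bigl(\omega(\eta)^{1/N}+\vert\xi-\eta\vert\bigr)$. One detail needs patching: the final step of your chain is in general shorter than the admissible radius, so for that step the increment of $\omega^{1/N}$ is \emph{not} controlled by the step length; handle it by the crude bound $\omega(\zeta_k)^{1/N}\le C^{1/N}\omega(\zeta_{k-1})^{1/N}$, which only costs one extra constant factor. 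The trade-off between the two routes: the paper's dichotomy is much shorter, while your chaining proves strictly more, namely the Lipschitz-type growth of $\omega^{1/N}$ --- the same structural property the paper obtains for the quasi-homogeneous weight in Example 2 (there from the derivative estimates \eqref{q_hom_der}), and which is what makes $(\mathcal{SV})$-weights behave like smooth slowly varying ones.
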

\begin{proof}
{\it Statement i:} Let the constants $C$, $N$ be fixed as in \eqref{sv_cond}. For $\xi, \eta\in\mathbb R^n$ such that $\vert\xi-\eta\vert\le\frac1{C}\omega(\xi)^{1/N}$, it follows directly from \eqref{sv_cond} that $\omega(\xi)\le C\omega(\eta)\le C\omega(\eta)(1+\vert\xi-\eta\vert)^N
$. On the other hand, when $\vert\xi-\eta\vert>\frac1{C}\omega(\xi)^{1/N}$ from \eqref{bb_cond} we deduce at once
$\omega(\xi)\le C^N\vert\xi-\eta\vert^N\le\frac{C^N}{c}\omega(\eta)(1+\vert\xi-\eta\vert)^N\,.
$

This shows the validity of the first implication. As for the second one, it is sufficient to observe that $\omega(\xi)\ge\varepsilon>0$ and $0<\delta<1$ yield at once
\begin{equation}\label{stima_delta}
\omega(\xi)^\delta\le c^{\delta-1} \omega(\xi)\,, \qquad\forall\,\xi\in\mathbb R^n\,.
\end{equation}
Then the result follows from estimating by \eqref{stima_delta} the function $\omega^\delta$ in the right-hand side of \eqref{delta_cond}.

{\it Statement ii:} For every $\xi\in\mathbb R^n$, using \eqref{ring_cond} we get
\begin{equation*}
\frac{\omega(\xi)}{\omega(\xi-\eta)\omega(\eta)}\le C\left\{\frac{1}{\omega(\eta)}+\frac{1}{\omega(\xi-\eta)}\right\}\,;
\end{equation*}
hence the first implication follows observing that, by a suitable change of variables, the right-hand side is an integrable function on $\mathbb R^n$, whose integral is independent of $\xi$.

Concerning the second implication, for every $\xi\in\mathbb R^n$, from \eqref{delta_cond} we get
\begin{equation*}
\begin{split}
\left(\frac{\omega(\xi)}{\omega(\xi-\eta)\omega(\eta)}\right)^{\frac1{1-\delta}}&\le C_\delta\left(\omega(\xi-\eta)^{\delta-1}+\omega(\eta)^{\delta-1}\right)^{\frac1{1-\delta}}\\
&\le C_\delta\left\{\left(\omega(\xi-\eta)^{\delta-1}\right)^{\frac1{1-\delta}}+\left(\omega(\eta)^{\delta-1}\right)^{\frac1{1-\delta}}\right\}\\
&=C_\delta\left\{\frac1{\omega(\xi-\eta)}+\frac1{\omega(\eta)}\right\}\,,
\end{split}
\end{equation*}
for a suitable constant $C_\delta>0$ depending on $\delta$. Now we conclude as in the proof of the first implication.

\end{proof}
{\it Examples}
\begin{itemize}
\item[1.] The {\it standard homogeneous weight}
\begin{equation}\label{ell_wf}
\langle\xi\rangle^m:=\left(1+\vert\xi\vert^2\right)^{m/2}\,,\qquad\xi\in\mathbb R^n\,,\,\,\,m\in\mathbb R\,,
\end{equation}
is a weight function according to the definition given at the beginning of this section.

The well-known Peetre inequality
\begin{equation}\label{peetre}
\langle\xi\rangle^m\le 2^{\vert m\vert}\langle\xi-\eta\rangle^{\vert m\vert}\langle\eta\rangle^m\,,\qquad\forall\,\xi\,,\eta\in\mathbb R^n\,,
\end{equation}
shows that $\langle\cdot\rangle^m$ satisfies the condition ($\mathcal{T}$) for every $m\in\mathbb R$ (with $N=\vert m\vert$) as well as the condition ($\mathcal{SM}$) for $m\ge 0$. For every $m\ge 0$, the function $\langle\cdot\rangle^m$ also fulfils ($\mathcal{SV}$) (where $N=m$) as a consequence of a Taylor expansion, and ($\mathcal{SA}$).  Finally $1/\langle\cdot\rangle^m$ satisfies the integrability condition \eqref{integrability} as long as $m>n$; hence $\langle\cdot\rangle^m$ satisfies condition ($\mathcal{B}$) for $m>n$, in view of the statement ii of Proposition \ref{wf_relationship}.

\item[2.] For $M=(m_1,\dots,m_n)\in\mathbb N^n$, the {\it quasi-homogeneous weight} is defined as
\begin{equation}\label{quasi_ell_wf}
\langle\xi\rangle_M:=\left(1+\sum\limits_{j=1}^n\xi_j^{2m_j}\right)^{1/2}\,,\qquad\forall\,\xi\in\mathbb R^n\,.
\end{equation}

The quasi-homogeneous weight obeys the  polynomial growth condition
\begin{equation}\label{pg_q_hom}
\frac1{C}\langle\xi\rangle^{m_\ast}\le \langle\xi\rangle_M\le C\langle\xi\rangle^{m^\ast}\,,\qquad\forall\,\xi\in\mathbb R^n\,,
\end{equation}
for some positive constant $C$ and  $m_\ast:=\min\limits_{1\le j\le n}m_j$, $m^\ast:=\max\limits_{1\le j\le n}m_j$. Moreover, for all $s\in\mathbb R$, the derivatives of $\langle\cdot\rangle_M^s$ decay according to the estimates below
\begin{equation}\label{q_hom_der}
\left\vert\partial^\alpha_\xi\langle\xi\rangle_M^s\right\vert\le C_\alpha\langle\xi\rangle_M^{s-\langle\alpha\,,\frac1{M}\rangle}\,,\qquad\forall\,\xi\in\mathbb R^n\,,\, \forall \alpha\in\mathbb Z^n_+,
\end{equation}
where $\langle\alpha\,, \,\frac1{M}\rangle:=\sum\limits_{j=1}^n\frac{\alpha_j}{m_j}$ and $C_\alpha>0$ is a suitable constant. Using \eqref{q_hom_der} with $s=\frac1{m^\ast}$ we may prove that $\langle\cdot\rangle_M$ fulfils condition ($\mathcal{SV}$) with $N=m^\ast$; indeed from the trivial identities
\begin{equation}\label{taylor1}
\langle\xi\rangle_M^{1/m^\ast}-\langle\eta\rangle_M^{1/m^\ast}\!=\sum\limits_{j=1}^n(\xi_j-\eta_j)\int_0^1\partial_j\left(\langle\cdot\rangle_M^{1/m^\ast}\right)(\eta+t(\xi-\eta))\,dt
\end{equation}
and \eqref{q_hom_der}, we deduce
\begin{equation*}\label{sv_q_hom1}
\begin{split}
\left\vert\langle\xi\rangle_M^{1/m^\ast}\right.&\left.-\langle\eta\rangle_M^{1/m^\ast}\right\vert\le\sum\limits_{j=1}^n C_j\vert\xi_j-\eta_j\vert\int_0^1\langle \eta+t(\xi-\eta)\rangle_M^{1/m^\ast-1/m_j}\,dt\\
&\le\sum\limits_{j=1}^n C_j\vert\xi_j-\eta_j\vert\,,
\end{split}
\end{equation*}
since $m^\ast\ge m_j$ for every $j$. Now for $\xi$, $\eta$ satisfying
$
\vert\xi-\eta\vert\le\varepsilon \langle\xi\rangle_M^{1/m^\ast}
$,
from the previous inequality we deduce
$
\left\vert\langle\xi\rangle_M^{1/m^\ast}-\langle\eta\rangle_M^{1/m^\ast}\right\vert\!\le\!\widehat C \varepsilon \langle\xi\rangle_M^{1/m^\ast}
$,
with $\widehat C:=\sum\limits_{j=1}^n C_j$, that is
$
(1-\widehat C \varepsilon)\langle\xi\rangle_M^{1/m^\ast}\le\langle\eta\rangle_M^{1/m^\ast}\le(1+\widehat C \varepsilon)\langle\xi\rangle_M^{1/m^\ast}
$,
from which we get the conclusion, if we assume for instance $0<\varepsilon\le\frac1{2\widehat C}$.

From ($\mathcal{SV}$) and the trivial inequality $\langle\xi\rangle_M\ge 1$, using the statement i of Proposition \ref{wf_relationship} we obtain that ($\mathcal{T}$) is also satisfied with $N=m^\ast$.

Also, the weight $\langle\cdot\rangle_M$ satisfies condition ($\mathcal{SA}$) and, because of the left inequality in \eqref{pg_q_hom}, $1/\langle\cdot\rangle_M^s$ satisfies condition \eqref{integrability} provided that $s>\frac{n}{m_\ast}$. Then from the statement ii of Proposition \ref{wf_relationship}, $\langle\cdot\rangle_M^s$ satisfies condition ($\mathcal{B}$) for $s>\frac{n}{m_\ast}$.

At the end, let us observe that for $M=(m,\dots,m)$, with a given $m\in\mathbb N$, $\langle\xi\rangle_M \asymp\langle\xi\rangle^m$.

\item[3.] Let $\mathcal P$ be a {\it complete polyhedron} of $\mathbb{R}^n$ in the sense of Volevich-Gindikin, \cite{VG2}. The {\em multi-quasi-elliptic weight functions} is defined by
\begin{equation}\label{mqe_wf}
\lambda_{\cal P}(\xi):=\left(\sum_{\alpha\in V(\mathcal P)}\xi^{2\alpha}\right)^{1/2},\qquad\xi\in\mathbb R^n\,,
\end{equation}
where $V(\mathcal P)$ denotes the set of {\it vertices} of $\mathcal P$.

We recall that a {\em convex polyhedron} $\mathcal{P}\subset\mathbb{R}^n$ is the convex hull of a finite set $V(\mathcal{P})\subset \mathbb{R}^n$ of convex-linearly independent points, called {\em vertices} of $\mathcal{P}$, and univocally determined by $\mathcal {P}$ itself. Moreover, if $\mathcal{P}$ has non empty interior, it is completely described by
\begin{displaymath}\label{WF6}
\mathcal{P}=\{\xi\in\mathbb{R}^n;\nu\cdot\xi\geq 0, \forall \nu\in \mathcal{N}_0(\mathcal{P})\}\cap\{\xi\in\mathbb{R}^n;\nu\cdot\xi\leq 1,\forall\nu\in\mathcal{N}_1(\mathcal{P})\};
\end{displaymath}
where $\mathcal{N}_0(\mathcal{P})\subset \{\nu\in \mathbb{R}^n;\vert\nu\vert =1\}$, $\mathcal{N}_1(\mathcal{P})\subset \mathbb{R}^n$ are finite sets univocally determined by $\mathcal{P}$ and, as usual, $\nu\cdot\xi=\sum_{j=1}^n\nu_j\xi_j$. The boundary of $\mathcal{P}$, $\mathcal {F}(\mathcal{P})$, is made of faces $\mathcal{F}_{\nu}(\mathcal{P})$ which are the convex hulls of the vertices of $\mathcal{P}$ lying on the hyper-planes $H_\nu$ orthogonal to $\nu\in \mathcal{N}_0(\mathcal{P})\cup\mathcal{N}_1(\mathcal{P})$, of equation :
\begin{equation*}
\nu\cdot\xi=0\quad \mbox{if}\quad \nu\in\mathcal{N}_0(\mathcal{P}),\quad\quad \nu\cdot\xi=1\quad\mbox{if}\quad \nu\in\mathcal{N}_1(\mathcal{P}).
\end{equation*}
A {\it complete polyhedron} is a convex polyhedron $\mathcal{P}\subset\mathbb{R}^n_+=:\{\xi\in\mathbb{R}^n:\ \xi_j\geq 0, \ j=1,...,n\}$ such that:
\begin{itemize}
\item[i)] $V(\mathcal{P})\subset\mathbb{N}^n$;
\item[ii)] $(0,\dots,0)\in V(\mathcal{P})$, and $V(\mathcal{P})\neq\{(0,\dots,0)\}$;
\item[iii)] $\mathcal{N}_0(\mathcal{P})=\{e_1,\dots,e_n\}$ with $e_j=(0,\dots, 1_{j-entry},\dots 0)\in\mathbb{R}^n_+$;
\item[iv)] every $\nu\in \mathcal{N}_1(\mathcal{P})$ has strictly positive components $\nu_j$, $j=1,...,n$.
\end{itemize}

On can prove that the multi-quasi-elliptic weight growths at infinity according to the following estimates
\begin{equation}\label{pg_mqe}
\frac1{C}\langle\xi\rangle^{\mu_0}\le \lambda_{\mathcal P}(\xi)\le C\langle\xi\rangle^{\mu_1}\,,\qquad\forall\,\xi\in\mathbb R^n\,,
\end{equation}
for a suitable positive constant $C$ and where
\begin{equation}\label{min_max_order}
\mu_0:=\min\limits_{\gamma\in V(\mathcal P)\setminus\{0\}}\vert\gamma\vert\quad\mbox{and}\quad\mu_1:=\max\limits_{\gamma\in V(\mathcal P)}\vert\gamma\vert
\end{equation}
are called {\it minimum} and {\it maximum order} of $\mathcal P$ respectively. Moreover it can be proved  that for all $s\in\mathbb R$ the derivatives of $\lambda_{\mathcal P}^s$ decay according to the estimates below
\begin{equation}\label{mqe_der}
\left\vert\partial^\alpha_\xi\lambda_{\mathcal P}^s(\xi)\right\vert\le C_\alpha\lambda_{\mathcal P}(\xi)^{s-\frac1{\mu}\vert\alpha\vert}\,,\qquad\forall\,\xi\in\mathbb R^n\,,
\end{equation}
where
\begin{equation}\label{formal_order}
\mu:=\max\left\{1/\nu_j\,,\,\,j=1,\dots,n\,,\,\,\nu\in\mathcal N_1(\mathcal P)\right\}\,,
\end{equation}
satisfying $\mu\ge\mu_1$, is the so-called {\it formal order} of $\mathcal P$, see e.g. \cite{BBR} (see also \cite{GM1} where more general decaying estimates for $\lambda_{\mathcal P}^s$ are established).

Representing $\lambda_{\mathcal P}(\xi)^{1/\mu}-\lambda_{\mathcal P}(\eta)^{1/\mu}$ as in \eqref{taylor1}, for arbitrary $\xi, \eta\in\mathbb R^n$, and using \eqref{mqe_der} (with $s=1/\mu$), we deduce that $\lambda_{\mathcal P}$ satisfies ($\mathcal{SV}$) with $N=\mu$. Using also that $\lambda_{\mathcal P}(\xi)\ge 1$ (recall that $0\in V(\mathcal P)$), in view of Proposition \ref{wf_relationship} it follows that $\lambda_{\mathcal P}$ also satisfies ($\mathcal{T}$) with $N=\mu$, hence it is a weight function agreeing to the definition given at the beginning of this section.

The weight function $\lambda_{\mathcal P}$ does not satisfy condition ($\mathcal{SA}$); on the other hand it can be shown (see \cite{G0}) that condition ($\mathcal G$) is verified taking
\begin{equation}\label{delta}
\delta=\max\limits_{\beta\in\mathcal P\setminus\mathcal F(\mathcal P)}\max\limits_{\nu\in\mathcal N_1(\mathcal P)}\left\{\nu\cdot\beta\right\}\,.
\end{equation}
Since, from the left inequality in \eqref{pg_mqe} we also derive that $\lambda_{\mathcal P}^{-s}$ satisfies \eqref{integrability} for $s>\frac{n}{\mu_0}$, we conclude from the statement ii of Proposition \ref{wf_relationship} that $\lambda_{\mathcal P}^{r}$ satisfies condition ($\mathcal{B}$) if $r>\frac{n}{(1-\delta)\mu_0}$ for $\delta$ defined above. 

In the end, we notice that $\lambda_{\mathcal P}$ verifies ($\mathcal{SM}$) as a consequence of ($\mathcal{G}$), since $\lambda_{\mathcal P}(\xi)\ge 1$ and $0<\delta<1$, cf. Proposition \ref{wf_relationship}, statement i.
\end{itemize}

\begin{remark}\label{rmk:5}
{\rm We notice that the quasi-homogeneous weight $\langle\cdot\rangle_M$, $M=(m_1,\dots,m_n)\in\mathbb N^n$, considered in the example 2 is just the multi-quasi-elliptic weight $\lambda_{\mathcal P}$ introduced in the example 3 corresponding to the complete polyhedron $\mathcal P$ defined by the convex hull of the finite set $V(\mathcal P)=\{0\,,\,\, m_je_j\,,\,\,\,j=1,\dots,n\}$; in particular, the growth estimates \eqref{pg_q_hom} are the particular case of \eqref{pg_mqe} corresponding to the previous polyhedron $\mathcal P$ (in which case $\mu_0=m_\ast$ and $\mu_1=m^\ast$). Notice however that the decaying estimates \eqref{q_hom_der} satisfied by the quasi-homogeneous weight $\langle\cdot\rangle_M$ do not admit a counterpart in the case of the general multi-quasi-elliptic weight $\lambda_{\mathcal P}$. Estimates \eqref{q_hom_der} give a precise decay in each coordinate direction: the decrease of $\langle\xi\rangle_M$ corresponding to one derivative with respect to $\xi_j$ is measured by $\langle\xi\rangle_M^{-1/m_j}$\footnote{In other words the decay of the derivatives is measured here by the {\it vector weight} $(\langle\xi\rangle_M^{1/m_1},\dots,\langle\xi\rangle_M^{1/m_n})$, in the sense of {\it vector weighted} symbol classes, see \cite{G}, \cite{GM3}, \cite{RO1}.}. The lack of homogeneity in the weight associated to a general complete polyhedron $\mathcal P$ in \eqref{mqe_wf}, prevents from extending to derivatives of $\lambda_{\mathcal P}(\xi)$ the  decay properties in \eqref{q_hom_der}: estimates \eqref{mqe_der} do not take account of the decay corresponding {\it separately to each coordinate direction}.}
\end{remark}

\subsection{Weighted Lebesgue and Fourier--Lebesgue spaces}\label{FL_spaces_sct}
Let $\omega:\mathbb R^n\rightarrow ]0,+\infty[$ be a weight function.
\begin{definition}\label{w_L_spaces}
For every $p\in[1,+\infty]$, the weighted Lebesgue space $L^p_{\omega}(\mathbb R^n)$ is defined as the set of the (equivalence classes of) measurable functions $f:\mathbb R^n\rightarrow\mathbb C$ such that
\begin{equation}\label{w_L_cond}
\begin{split}
&\int_{\mathbb R^n}\omega(x)^p\vert f(x)\vert^p\,dx<+\infty\,,\quad\mbox{if}\,\,p<+\infty\,,\\
&\\
&\omega f\,\,\mbox{is essentially bounded in}\,\,\mathbb R^n\,,\quad\mbox{if}\,\,p=+\infty\,.
\end{split}
\end{equation}
\end{definition}
For every $p\in[1,+\infty]$, $L^p_{\omega}(\mathbb R^n)$ is Banach space with respect to the natural norm
\begin{equation}\label{norma_w_L}
\Vert f\Vert_{L^p_{\omega}}:=
\begin{cases}
\left(\int_{\mathbb R^n}\omega(x)^p\vert f(x)\vert^p\,dx\right)^{1/p}\,,\quad\mbox{if}\,\,p<+\infty\,,\\
\\
{\rm ess\,sup}_{x\in\mathbb R^n}\omega(x)\vert f(x)\vert\,,\quad\mbox{if}\,\,p=+\infty\,.
\end{cases}
\end{equation}

\begin{remark}\label{rmk:1}
{\rm It is easy to see that for all $p\in[1,+\infty]$
\begin{equation}\label{imbedding1}
L^p_{\omega_2}(\mathbb R^n)\hookrightarrow L^p_{\omega_1}(\mathbb R^n)\,,\qquad\mbox{if}\,\,\omega_1\preceq\omega_2\,.
\end{equation}
If in particular $\omega_1\asymp\omega_2$ then $L^p_{\omega_1}(\mathbb R^n)\equiv L^p_{\omega_2}(\mathbb R^n)$, and the norms defined in \eqref{norma_w_L} corresponding to $\omega_1$ and $\omega_2$  are equivalent. When the weight function $\omega$ is constant the related weighted space $L^p_{\omega}(\mathbb R^n)$ reduces to the standard Lebesgue space of order $p$, denoted as usual by $L^p(\mathbb R^n)$.}
\end{remark}

\begin{remark}\label{rmk:2}
{\rm For an arbitrary $p\in[1,+\infty]$, $f\in L^p_{\omega}(\mathbb R^n)$ and every $\varphi\in\mathcal S(\mathbb R^n)$ we obtain
\begin{equation}\label{holder}
\int_{\mathbb R^n}f(x)\varphi(x)\,dx\le\left\Vert\frac{\varphi}{\omega}\right\Vert_{L^q}\Vert\omega f\Vert_{L^p}\,,\qquad\frac1{p}+\frac1{q}=1\,.
\end{equation}
From \eqref{holder} and the estimates \eqref{pg_cond}, we deduce at once that
\begin{equation*}
\mathcal S(\mathbb R^n)\hookrightarrow L^p_{\omega}(\mathbb R^n)\hookrightarrow\mathcal S^\prime(\mathbb R^n)\,.
\end{equation*}
Moreover, $C^\infty_0(\mathbb R^n)$ is a dense subspace of $L^p_{\omega}(\mathbb R^n)$ when $p<+\infty$, see \cite{G2}.}
\end{remark}

\begin{remark}\label{rmk:8}
{\rm For $\Omega$ open subset of $\mathbb R^n$,  $L^p_{\omega}(\Omega)$, for any $p\in[1,+\infty]$, is the set of (equivalence classes of) measurable functions on $\Omega$ such that
\begin{equation}\label{Lpomega}
\Vert f\Vert^p_{L^p_{\omega}(\Omega)}:=\int_{\Omega}\omega(x)^p\vert f(x)\vert^p\,dx<+\infty
\end{equation}
(obvious modification for $p=+\infty$). $L^p_{\omega}(\Omega)$ is endowed with a structure of Banach space with respect to the natural norm defined by \eqref{Lpomega}.}
\end{remark}

\begin{definition}\label{w_FL_spaces}
For every $p\in[1,+\infty]$ and $\omega(\xi)$ temperate weight function, the weighted Fourier Lebesgue space $\mathcal F L^p_{\omega}(\mathbb R^n)$ is the vector space of all distributions $f\in\mathcal S^\prime(\mathbb R^n)$ such that
\begin{equation}\label{w_FL_cond}
\widehat f\in L^p_{\omega}(\mathbb R^n)\,,
\end{equation}
equipped with the natural norm
\begin{equation}\label{w_FL_norm}
\Vert f\Vert_{\mathcal FL^p_{\omega}}:=\Vert\widehat f\Vert_{L^p_{\omega}}\,.
\end{equation}
\end{definition}
Here $\widehat f$ is the Fourier transform $\widehat f(\xi)=\int e^{-i\xi\cdot x}f(x)\,dx$, defined in $\mathcal S(\mathbb R^n)$ and extended to  $\mathcal S'(\mathbb R^n)$.

The spaces $\mathcal FL^p_{\omega}(\mathbb R^n)$ were  introduced in H\"ormander \cite{HOR1}, with the notation $\mathcal B_{p,k}$, $k(\xi)$ weight function, for the study of the regularity of solutions to hypoelliptic partial differential equations with constant coefficients, see also \cite{G2}, \cite{G1}.

From the mapping properties of the Fourier transform on $\mathcal S(\mathbb R^n)$ and $\mathcal S^\prime(\mathbb R^n)$ and the above stated properties of weighted Lebesgue spaces we can conclude, see again \cite{G2}, that for all $p\in[1,+\infty]$ and $\omega$ temperate weight function
\begin{itemize}
\item[(a)] $\mathcal F L^p_{\omega}(\mathbb R^n)$ is a Banach space with respect to the norm \eqref{w_FL_norm};
\item[(b)] $\mathcal S(\mathbb R^n)\hookrightarrow \mathcal F L^p_{\omega}(\mathbb R^n)\hookrightarrow\mathcal S^\prime(\mathbb R^n)$;
\item[(c)] $C^\infty_0(\mathbb R^n)$ is a dense subspace of $\mathcal F L^p_{\omega}(\mathbb R^n)$ when $p<+\infty$;
\item[(d)] $\mathcal F L^p_{\omega_2}(\mathbb R^n)\hookrightarrow\mathcal F L^p_{\omega_1}(\mathbb R^n)$ if $\omega_1\preceq\omega_2$; in particular, we obtain that $\mathcal F L^p_{\omega_2}(\mathbb R^n)\equiv\mathcal F L^p_{\omega_1}(\mathbb R^n)$ as long as $\omega_1\asymp\omega_2$ and the norms corresponding to $\omega_1$ and $\omega_2$ by \eqref{w_FL_norm} are equivalent in this case.
\end{itemize}
When $\omega$ is a positive constant the weighted space $\mathcal F L^p_{\omega}(\mathbb R^n)$ is simply denoted by $\mathcal F L^p(\mathbb R^n)$. Moreover we will adopt the shortcut notations $L^p_s(\mathbb R^n):= L^p_{\langle\cdot\rangle^s}(\mathbb R^n)$, $\mathcal F L^p_s(\mathbb R^n):=\mathcal F L^p_{\langle\cdot\rangle^s}(\mathbb R^n)$ for the corresponding Lebesgue and Four\-ier Lebesgue spaces.

Analogously, when $\omega(\xi)=\langle\xi\rangle_M^s$ or $\omega(\xi)=\lambda_{\mathcal P}(\xi)^s$, for $s\in\mathbb R$, the corresponding Lebesgue and Fourier Lebesgue spaces will be denoted $L^p_{s,M}(\mathbb R^n)$, $\mathcal F L^p_{s,M}(\mathbb R^n)$ and $L^p_{s,\mathcal P}(\mathbb R^n)$, $\mathcal F L^p_{s, \mathcal P}(\mathbb R^n)$ respectively.

\vspace{.5cm}
A local counterpart of Fourier Lebesgue spaces can be introduced in the following natural way (see \cite{PTT2}).

\begin{definition}\label{loc_w_FL_spaces}
For $\omega$  weight function, $\Omega$ open subset of $\mathbb R^n$ and any $p\in[1,+\infty]$, $\mathcal FL^p_{\omega,\,{\rm loc}}(\Omega)$ is the class of all distributions $f\in\mathcal D^\prime(\Omega)$ such that $\varphi f\in\mathcal FL^p_{\omega}(\mathbb R^n)$ for every $\varphi\in C^\infty_0(\Omega)$.

For $x_0\in\Omega$, $f\in\mathcal FL^p_{\omega,\,{\rm loc}}(x_0)$ if there exists $\phi\in C^\infty_0(\Omega)$, with $\phi(x_0)\neq 0$, such that $\phi f\in\mathcal FL^p_\omega(\mathbb R^n)$.
\end{definition}

The family of semi-norms
\begin{equation}\label{loc_w_FL_norm}
f\mapsto\Vert \varphi f\Vert_{\mathcal F L^p_{\omega}}\,,\qquad\varphi\in C^\infty_0(\Omega)\,,
\end{equation}
provides $\mathcal FL^p_{\omega\,,{\rm loc}}(\Omega)$ with a natural Fr\'echet space topology. Moreover the following inclusions hold true with continuous embedding
\begin{equation}\label{inclusioni1}
C^\infty(\Omega)\hookrightarrow\mathcal F L^p_{\omega\,,{\rm loc}}(\Omega)\hookrightarrow\mathcal D^\prime(\Omega)
\end{equation}
and for $\Omega_1\subset\Omega_2$ open sets
\begin{equation}\label{inclusioni2}
\mathcal F L^p_{\omega}(\mathbb R^n)\hookrightarrow\mathcal F L^p_{\omega\,,{\rm loc}}(\Omega_2)\hookrightarrow \mathcal F L^p_{\omega\,,{\rm loc}}(\Omega_1)\,.
\end{equation}
\begin{remark}\label{rmk:3}
{\rm It is worth noticing that, as it was proved in \cite{RSTT}, {\it locally} the weighted Fourier Lebesgue spaces $\mathcal F L^q_{\omega}(\mathbb R^n)$ are the same as the weighted {\it modulation spaces} $M^{p,q}_{\omega}(\mathbb R^n)$ and the {\it Wiener amalgam spaces}  $W^{p,q}_{\omega}(\mathbb R^n)$, in the sense that
\begin{equation*}
\mathcal F L^q_{\omega}(\mathbb R^n)\cap\mathcal E^\prime(\mathbb R^n)=M^{p,q}_{\omega}(\mathbb R^n)\cap\mathcal E^\prime(\mathbb R^n)=W^{p,q}_{\omega}(\mathbb R^n)\cap\mathcal E^\prime(\mathbb R^n)\,.
\end{equation*}
We refer to Feichtinger \cite{F1} and Gr\"ochenig \cite{GRO1} for the definition and basic properties of modulation and amalgam spaces.}
\end{remark}

\vspace{.5cm}
Agreeing with the previous notations, when the weight function $\omega$ reduces to those considered in the examples 1, 2, 3 above, the corresponding local Fourier Lebesgue spaces will be denoted respectively by $\mathcal F L^p_{s,{\rm loc}}(\Omega)$, $\mathcal F L^p_{s,M,{\rm loc}}(\Omega)$, $\mathcal F L^p_{s,\mathcal P,{\rm loc}}(\Omega)$.

\vspace{.5cm}
Notice at the end that, from Plancherel Theorem, when $p=2$ the global and local weighted Fourier Lebesgue spaces $\mathcal F L^2_{\omega}(\mathbb R^n)$, $\mathcal F L^2_{\omega\,,{\rm loc}}(\Omega)$ coincide with weighted spaces of Sobolev type, see Garello \cite{G3} for an extensive study of such spaces.

\section{Algebra conditions in spaces $\mathcal FL^p_{\omega}(\mathbb  R^n)$}\label{algebra_sct}
In order to seek conditions on the weight function $\omega$ which allow the Fourier Lebesgue space $\mathcal F L^p_{\omega}(\mathbb R^n)$ to be an algebra with respect to the point-wise product, let us first state a general continuity result in the framework of suitable mixed-norm spaces of Lebesgue type.

Following \cite{F1}, \cite{GRO1} and in particurar\cite{PTT2}, for $p, q\in [1,+\infty]$ we denote by $\mathcal L_1^{p,q}(\mathbb R^{2n})$ the space of all (equivalence classes of) measurable functions $F=F(\zeta,\eta)$ in $\mathbb R^{n}\times\mathbb R^n$ such that the mixed norm
\begin{equation}\label{mixed_norm1}
\Vert F\Vert_{\mathcal L_1^{p,q}}:=\left(\int\left(\int\vert F(\zeta,\eta)\vert^p\,d\zeta\right)^{q/p}d\eta\right)^{1/q}
\end{equation}
is finite (with obvious modifications if $p$ or $q$ equal $+\infty$).

We also define $\mathcal L_2^{p,q}(\mathbb R^{2n})$ to be the space of measurable functions $F=F(\xi,\eta)$ in $\mathbb R^{n}\times\mathbb R^n$ such that the norm
\begin{equation}\label{mixed_norm2}
\Vert F\Vert_{\mathcal L_2^{p,q}}:=\left(\int\left(\int\vert F(\zeta,\eta)\vert^q\,d\eta\right)^{p/q}d\zeta\right)^{1/p}
\end{equation}
is finite.


\begin{lemma}\label{lemma_cont2}
For $p, q\in [1,+\infty]$ satisfying $\frac1{p}+\frac1{q}=1$, let $f=f(\zeta,\eta)\in\mathcal L^{p,\infty}_1(\mathbb R^{2n})$ and $F=F(\zeta,\eta)\in\mathcal L_2^{\infty,q}(\mathbb R^{2n})$. Then the linear map
\begin{equation}\label{TF2}
\begin{split}
T: C_0^\infty(\mathbb R^n)&\rightarrow\mathcal S^\prime(\mathbb R^n)\\
&g\mapsto Tg:=\int F(\xi,\eta)f(\xi-\eta,\eta)g(\eta)\,d\eta\,.
\end{split}
\end{equation}
extends uniquely to a continuous map from $L^p(\mathbb R^n)$ into itself, still denoted by $T$; moreover its operator norm is bounded as follows
\begin{equation}\label{estimate_TF}
\Vert  T\Vert_{\mathcal L(L^p)}\le \Vert f\Vert_{\mathcal L_1^{p,\infty}}\Vert F\Vert_{\mathcal L_2^{\infty,q}}\,.
\end{equation}
\end{lemma}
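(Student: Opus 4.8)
The plan is to establish the estimate \eqref{estimate_TF} first as an a priori bound for $g\in C_0^\infty(\mathbb R^n)$ by a direct Schur-type computation, and then to extend $T$ by density. The entire argument rests on a single application of Hölder's inequality in the integration variable $\eta$, followed by Tonelli's theorem and a translation change of variables; there is no genuine obstacle, only a couple of bookkeeping choices that must be made correctly.

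First I would estimate $Tg(\xi)$ pointwise. Writing the integrand as the product $|F(\xi,\eta)|\cdot\bigl(|f(\xi-\eta,\eta)|\,|g(\eta)|\bigr)$ and applying Hölder in $\eta$ with the conjugate exponents $q$ and $p$ gives
\[|Tg(\xi)|\le\left(\int|F(\xi,\eta)|^q\,d\eta\right)^{1/q}\left(\int|f(\xi-\eta,\eta)|^p|g(\eta)|^p\,d\eta\right)^{1/p}.\]
The first factor is bounded, uniformly in $\xi$, by $\|F\|_{\mathcal L_2^{\infty,q}}$, which is precisely the essential supremum over the first variable of the inner $L^q_\eta$-norm appearing in \eqref{mixed_norm2}. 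Next I would raise the inequality to the $p$-th power and integrate in $\xi$. After pulling out the constant $\|F\|_{\mathcal L_2^{\infty,q}}^p$, Tonelli's theorem permits exchanging the order of integration, yielding
\[\|Tg\|_{L^p}^p\le\|F\|_{\mathcal L_2^{\infty,q}}^p\int|g(\eta)|^p\left(\int|f(\xi-\eta,\eta)|^p\,d\xi\right)d\eta.\]
The translation $\zeta=\xi-\eta$ leaves Lebesgue measure invariant, so the inner integral equals $\int|f(\zeta,\eta)|^p\,d\zeta$, which is at most $\|f\|_{\mathcal L_1^{p,\infty}}^p$ for a.e.\ $\eta$ by the definition \eqref{mixed_norm1} of the $\mathcal L_1^{p,\infty}$-norm. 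This decouples the two factors and produces \eqref{estimate_TF} upon taking the $p$-th root.

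The only points demanding care are the Hölder split — one must assign the exponent $q$ to $F$ so as to match its $\mathcal L_2^{\infty,q}$-integrability, while $f$ and $g$ share the exponent $p$ — and the observation that the translation change of variables is exactly what separates the $f$-norm from the factor carrying $g$. For the extension I would argue as follows. When $p<\infty$ the a priori bound on $C_0^\infty(\mathbb R^n)$, together with the density of $C_0^\infty(\mathbb R^n)$ in $L^p(\mathbb R^n)$ recorded in Remark~\ref{rmk:2}, furnishes a unique continuous extension satisfying the same operator-norm bound; moreover the finiteness of $\|Tg\|_{L^p}$ shows that the defining integral converges absolutely for a.e.\ $\xi$ for any $g\in L^p$. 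The endpoint $p=\infty$ (so $q=1$) must be handled separately, since $C_0^\infty$ is not dense in $L^\infty$: there one checks directly that $|Tg(\xi)|\le\|f\|_{\mathcal L_1^{\infty,\infty}}\|g\|_{L^\infty}\int|F(\xi,\eta)|\,d\eta\le\|f\|_{\mathcal L_1^{\infty,\infty}}\|F\|_{\mathcal L_2^{\infty,1}}\|g\|_{L^\infty}$, so the integral converges for every $g\in L^\infty$ and $T$ is bounded outright with the required norm estimate.
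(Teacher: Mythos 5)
Your proposal is correct, but there is nothing in the paper to compare it against line by line: the paper does not prove Lemma \ref{lemma_cont2} at all, it only cites \cite[Lemma 2.1]{G2} (in a different formulation) and \cite[Proposition 3.2]{PTT2} (a restricted version). Your argument supplies exactly the standard proof one would expect those references to contain: H\"older's inequality in $\eta$ with exponents $(q,p)$ splitting off $\Vert F(\xi,\cdot)\Vert_{L^q}\le\Vert F\Vert_{\mathcal L_2^{\infty,q}}$, then Tonelli and the translation $\zeta=\xi-\eta$ to decouple
\begin{equation*}
\int\vert f(\xi-\eta,\eta)\vert^p\,d\xi=\int\vert f(\zeta,\eta)\vert^p\,d\zeta\le\Vert f\Vert^p_{\mathcal L_1^{p,\infty}}
\end{equation*}
from the factor $\vert g(\eta)\vert^p$. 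Two points you handle that a careless write-up would miss, and which deserve to stay in any final version: first, the density extension only works for $p<+\infty$, and your separate treatment of $p=+\infty$ (where $q=1$ and the integral converges absolutely for every $g\in L^\infty$, so $T$ is defined by the formula outright rather than by abstract extension) is the right reading of the statement, since ``extends uniquely'' cannot be meant in the density sense there; second, your observation that the a priori bound applied to $\vert F\vert,\vert f\vert,\vert g\vert$ shows the defining integral converges absolutely for a.e.\ $\xi$ whenever $g\in L^p$, so the extension is still given by the same formula. The only cosmetic refinement is to note explicitly that bounds like $\Vert F(\xi,\cdot)\Vert_{L^q}\le\Vert F\Vert_{\mathcal L_2^{\infty,q}}$ and $\vert f(\xi-\eta,\eta)\vert\le\Vert f\Vert_{\mathcal L_1^{\infty,\infty}}$ hold only for a.e.\ $\xi$ (resp.\ a.e.\ $(\xi,\eta)$, using that $(\xi,\eta)\mapsto(\xi-\eta,\eta)$ preserves Lebesgue measure), which is harmless since they are subsequently integrated.
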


The proof is given in \cite[Lemma 2.1]{G2}, where the statement reads in quite different formulation. The reader can find a restricted version, independently proved, in \cite[Proposition 3.2]{PTT2}.

\begin{proposition}\label{algebra_prop}
Assume that $\omega$, $\omega_1$, $\omega_2$ are weight functions such that
\begin{equation}\label{B_Lq}
C_q:=\sup\limits_{\xi\in\mathbb R^n}\left\Vert\frac{\omega(\xi)}{\omega_1(\xi-\cdot)\omega_2(\cdot)}\right\Vert_{L^q}<+\infty\,,
\end{equation}
for some $q\in[1,+\infty]$, and let $p\in [1,+\infty]$ be the conjugate exponent of $q$. Then
\begin{itemize}
\item[(1)] the point-wise product map $(f_1,f_2)\mapsto f_1 f_2$ from $\mathcal S(\mathbb R^n)\times\mathcal S(\mathbb R^n)$ to $\mathcal S(\mathbb R^n)$ extends uniquely to a continuous bilinear map from $\mathcal F L^p_{\omega_1}(\mathbb R^n)\times \mathcal F L^p_{\omega_2}(\mathbb R^n)$ to $\mathcal F L^p_{\omega}(\mathbb R^n)$. Moreover for all $f_i\in\mathcal F L^p_{\omega_i}(\mathbb  R^n)$, $i=1,2$, the following holds:
    \begin{equation}\label{prod_est_1}
    \Vert f_1 f_2\Vert_{\mathcal F L^p_{\omega}}\le C_q\Vert f_1\Vert_{\mathcal F L^p_{\omega_1}}\Vert f_2\Vert_{\mathcal F L^p_{\omega_2}}\,.
    \end{equation}
\item[(2)] for every open set $\Omega\subseteq\mathbb R^n$ the point-wise product map $(f_1,f_2)\mapsto f_1 f_2$ from $C^\infty_0(\Omega)\times C^\infty_0(\Omega)$ to $C^\infty_0(\Omega)$ extends uniquely to a continuous bilinear map from $\mathcal F L^p_{\omega_1,{\rm loc}}(\Omega)\times \mathcal F L^p_{\omega_2,{\rm loc}}(\Omega)$ to $\mathcal F L^p_{\omega,{\rm loc}}(\Omega)$.
\end{itemize}
\end{proposition}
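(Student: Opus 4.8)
The plan is to pass to the Fourier side, where the pointwise product becomes a convolution, and to recognize the resulting weighted convolution as an instance of the operator $T$ of Lemma~\ref{lemma_cont2}; the hypothesis \eqref{B_Lq} is tailored exactly to make that identification work. First I would establish \eqref{prod_est_1} for $f_1,f_2\in\mathcal S(\mathbb R^n)$ and only afterwards extend by density. Setting $g_i:=\omega_i\widehat{f_i}\in L^p(\mathbb R^n)$, so that $\|g_i\|_{L^p}=\|f_i\|_{\mathcal F L^p_{\omega_i}}$, and recalling that the Fourier transform carries the pointwise product into the convolution $\widehat{f_1}\ast\widehat{f_2}$ of the transforms (up to the normalizing constant, here suppressed), I would rewrite the weighted Fourier transform of the product as
\begin{equation*}
\omega(\xi)\,\widehat{f_1f_2}(\xi)=\int\frac{\omega(\xi)}{\omega_1(\xi-\eta)\,\omega_2(\eta)}\,g_1(\xi-\eta)\,g_2(\eta)\,d\eta\,.
\end{equation*}

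The key step is to read the right-hand side as $Tg_2$ for the operator in \eqref{TF2}, with the choices $f(\zeta,\eta):=g_1(\zeta)$ and $F(\xi,\eta):=\dfrac{\omega(\xi)}{\omega_1(\xi-\eta)\,\omega_2(\eta)}$. Since $f$ does not depend on $\eta$, its mixed norm \eqref{mixed_norm1} is $\|f\|_{\mathcal L_1^{p,\infty}}=\|g_1\|_{L^p}$; and by the very definition \eqref{mixed_norm2} of the $\mathcal L_2^{\infty,q}$ norm, the hypothesis \eqref{B_Lq} says precisely that $\|F\|_{\mathcal L_2^{\infty,q}}=C_q<+\infty$. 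Lemma~\ref{lemma_cont2} then yields $\|Tg_2\|_{L^p}\le\|f\|_{\mathcal L_1^{p,\infty}}\|F\|_{\mathcal L_2^{\infty,q}}\|g_2\|_{L^p}=C_q\|g_1\|_{L^p}\|g_2\|_{L^p}$, which, translated back through \eqref{w_FL_norm}, is exactly \eqref{prod_est_1}.

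To obtain the asserted continuous bilinear map on all of $\mathcal F L^p_{\omega_1}(\mathbb R^n)\times\mathcal F L^p_{\omega_2}(\mathbb R^n)$, I would argue that for $p<+\infty$ the density of $\mathcal S(\mathbb R^n)$ recalled above, together with the bilinear bound \eqref{prod_est_1}, lets the product—a priori defined only on Schwartz functions—extend uniquely to a continuous bilinear map, whose value is the inverse Fourier transform of the $L^p_\omega$ function exhibited above. The case $p=+\infty$ (hence $q=1$) must be treated directly, since $\mathcal S$ is not dense: there the kernel $\frac{\omega(\xi)}{\omega_1(\xi-\eta)\omega_2(\eta)}$ is integrable in $\eta$ uniformly in $\xi$ by \eqref{B_Lq}, so the defining integral converges absolutely and gives the bound at once. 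I expect this bookkeeping—reconciling the distributional product with the convolution definition, and covering $p=+\infty$—to be the only genuinely delicate point, the core estimate being immediate from Lemma~\ref{lemma_cont2}.

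Finally, for the local statement (2) I would localize the product. Given $\varphi\in C^\infty_0(\Omega)$, choose $\psi\in C^\infty_0(\Omega)$ with $\psi\equiv 1$ on $\mathrm{supp}\,\varphi$, so that $\varphi\,(f_1f_2)=(\varphi f_1)(\psi f_2)$. By Definition~\ref{loc_w_FL_spaces} one has $\varphi f_1\in\mathcal F L^p_{\omega_1}(\mathbb R^n)$ and $\psi f_2\in\mathcal F L^p_{\omega_2}(\mathbb R^n)$, and part (1) gives $(\varphi f_1)(\psi f_2)\in\mathcal F L^p_{\omega}(\mathbb R^n)$ together with $\|\varphi(f_1f_2)\|_{\mathcal F L^p_\omega}\le C_q\|\varphi f_1\|_{\mathcal F L^p_{\omega_1}}\|\psi f_2\|_{\mathcal F L^p_{\omega_2}}$. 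Since $\varphi$ is arbitrary, $f_1f_2\in\mathcal F L^p_{\omega,\mathrm{loc}}(\Omega)$, and these bounds on the defining seminorms \eqref{loc_w_FL_norm} furnish the continuity of the bilinear product map for the Fréchet topologies of the local spaces.
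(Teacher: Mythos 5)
Your proof is correct and takes essentially the same route as the paper's: both pass to the Fourier side, write $\omega(\xi)\widehat{f_1f_2}(\xi)$ as the integral operator \eqref{TF2} with $F(\zeta,\eta)=\omega(\zeta)/(\omega_1(\zeta-\eta)\omega_2(\eta))$ and the $\eta$-independent function $f=\omega_1\widehat{f_1}$, and then invoke Lemma~\ref{lemma_cont2} together with \eqref{B_Lq} to get \eqref{prod_est_1}. Your extra bookkeeping (the density extension, the direct treatment of $p=+\infty$, and the cutoff argument $\varphi(f_1f_2)=(\varphi f_1)(\psi f_2)$ for statement (2)) only fills in details the paper leaves implicit.
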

\begin{proof}
The proof of statement (2) follows at once from that of statement (1).

As for the proof of statement (1), for given $f_1, f_2\in\mathcal S(\mathbb R^n)$ one easily computes:
\begin{equation}\label{representation}
\begin{split}
\omega(\xi)\widehat{f_1 f_2}(\xi)&=(2\pi)^{-n}\int\omega(\xi)\widehat{f_1}(\xi-\eta)\widehat{f_2}(\eta)\,d\eta\\
&=\int F(\xi,\eta)f(\xi-\eta)g(\eta)\,d\eta\,,
\end{split}
\end{equation}
where
\begin{equation*}
F(\zeta,\eta)=\frac{\omega(\zeta)}{\omega_1(\zeta-\eta)\omega_2(\eta)}\,,\,\,\, f(\zeta)=\omega_1(\zeta)\widehat{f_1}(\zeta)\,,\,\,\, g(\zeta)=\omega_2(\zeta)\widehat{f_2}(\zeta)\,.
\end{equation*}
The right-hand side of \eqref{representation} provides a representation of $\omega\widehat{f_1 f_2}$ as an integral operator of the form \eqref{TF2}. Condition \eqref{B_Lq} just means that the function $F(\zeta,\eta)\in\mathcal L_2^{\infty, q}(\mathbb R^{2n})$ (cf. \eqref{mixed_norm2}) and of course the $\eta-$independent function $f=f(\zeta)\in\mathcal S(\mathbb R^n)$ also belongs to $\mathcal L^{p,\infty}_1(\mathbb R^{2n})$. Then applying to \eqref{representation} the result of Lemma \ref{lemma_cont2}, together with the definition of the norm in Fourier Lebesgue spaces, we obtain that the point-wise product $f_1f_2$ satisfies the estimates in \eqref{prod_est_1} and the proof is concluded.
\end{proof}

When the weight functions $\omega$, $\omega_1$ and $\omega_2$ in the statement of Proposition \ref{algebra_prop} coincide, condition \eqref{B_Lq} provides a sufficient condition for $\mathcal F L^p_{\omega}(\mathbb R^n)$ (or its localized counterpart $\mathcal F L^p_{\omega, {\rm loc}}(\Omega)$) is an algebra for the point-wise product. Then we have the following

\begin{corollary}\label{algebra_prop2}
Let $\omega$ be a weight function such that
\begin{equation}\label{B_q}
C_q:=\sup\limits_{\xi\in\mathbb R^n}\left\Vert\frac{\omega(\xi)}{\omega(\xi-\cdot)\omega(\cdot)}\right\Vert_{L^q}<+\infty\,,
\end{equation}
for $q\in[1,+\infty]$, and $p\in[1,+\infty]$ the conjugate exponent of q. Then
\begin{itemize}
\item[(1)] $\left(\mathcal F L^p_{\omega}(\mathbb R^n),\cdot\right)$ is an algebra and for $f_1, f_2\in \mathcal F L^p_{\omega}(\mathbb R^n)$
    \begin{equation}\label{prod_est_2}
    \Vert f_1 f_2\Vert_{\mathcal F L^p_{\omega}}\le C_q\Vert f_1\Vert_{\mathcal F L^p_{\omega}}\Vert f_2\Vert_{\mathcal F L^p_{\omega}}\,.
    \end{equation}
\item[(2)] for every open set $\Omega\subseteq\mathbb R^n$, $\left(\mathcal F L^p_{\omega,{\rm loc}}(\Omega),\cdot\right)$ is an algebra.
\end{itemize}
\end{corollary}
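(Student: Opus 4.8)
The plan is to obtain the statement as a direct specialization of Proposition \ref{algebra_prop} to the case $\omega_1=\omega_2=\omega$. First I would observe that, upon setting $\omega_1=\omega_2=\omega$, the integrability requirement \eqref{B_Lq} of Proposition \ref{algebra_prop} collapses exactly into the hypothesis \eqref{B_q} of the present corollary; hence the weight functions $\omega,\omega_1,\omega_2$ (all equal to $\omega$) fulfil the assumptions of that proposition, with the same constant $C_q$.

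For statement (1), I would then invoke part (1) of Proposition \ref{algebra_prop}: the point-wise product, initially defined on $\mathcal S(\mathbb R^n)\times\mathcal S(\mathbb R^n)$, extends uniquely to a continuous bilinear map $\mathcal F L^p_{\omega}(\mathbb R^n)\times\mathcal F L^p_{\omega}(\mathbb R^n)\to\mathcal F L^p_{\omega}(\mathbb R^n)$, and the estimate \eqref{prod_est_1} becomes precisely \eqref{prod_est_2}. Since $\mathcal F L^p_{\omega}(\mathbb R^n)$ is already a Banach space by property (a), the closure under multiplication together with the submultiplicative bound \eqref{prod_est_2} is exactly the assertion that $\left(\mathcal F L^p_{\omega}(\mathbb R^n),\cdot\right)$ is a (commutative) Banach algebra.

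Statement (2) follows in the same way from part (2) of Proposition \ref{algebra_prop} with $\omega_1=\omega_2=\omega$: the point-wise product extends to a continuous bilinear map from $\mathcal F L^p_{\omega,{\rm loc}}(\Omega)\times\mathcal F L^p_{\omega,{\rm loc}}(\Omega)$ into $\mathcal F L^p_{\omega,{\rm loc}}(\Omega)$, which, relative to the Fr\'echet topology generated by the seminorms \eqref{loc_w_FL_norm}, endows $\mathcal F L^p_{\omega,{\rm loc}}(\Omega)$ with the structure of a topological algebra.

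I do not expect any genuine obstacle here, since all the analytic content---the reduction of the product to the integral operator \eqref{TF2} and the mixed-norm continuity of Lemma \ref{lemma_cont2}---has already been absorbed into Proposition \ref{algebra_prop}. The only point deserving a line of verification is the purely formal one that bilinear continuity together with the submultiplicative estimate is what \emph{algebra} means in the Banach (respectively Fr\'echet) category; no further estimate is required.
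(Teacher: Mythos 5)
Your proposal is correct and coincides with the paper's own (implicit) argument: the paper states Corollary \ref{algebra_prop2} as an immediate specialization of Proposition \ref{algebra_prop} to $\omega_1=\omega_2=\omega$, under which \eqref{B_Lq} becomes \eqref{B_q} and \eqref{prod_est_1} becomes \eqref{prod_est_2}, exactly as you argue. No further verification is needed.
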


The algebra properties of Corollary \ref{algebra_prop2} let us handle the composition of a Fourier Lebesgue distribution with an entire analytic functions; namely we have the following result, see \cite[Corollary 2.1]{G2}.

\begin{corollary}\label{composition_prop}
Under the same assumptions of Corollary \ref{algebra_prop2} on $\omega$ and $p$, let $F:\mathbb{C}\rightarrow \mathbb{C}$ be an entire analytic function such that $F(0)=0$. Then $F(u)\in\mathcal FL^p_{\omega}(\mathbb R^n)$ for every $u\in \mathcal FL^p_{\omega}(\mathbb R^n)$, and
\begin{equation}\label{LPE61}
\Vert F(u)\Vert_{\mathcal FL^{p}_{\omega}}\leq C\Vert u\Vert_{\mathcal FL^{p}_{\omega}}\,,
\end{equation}
with $C=C(p,F,\Vert u\Vert_{\mathcal FL^{p}_{\omega}})$.
\end{corollary}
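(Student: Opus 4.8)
The plan is to expand $F$ in its Taylor series about the origin and to sum the resulting series of powers of $u$ inside the Banach algebra $\mathcal FL^p_\omega(\mathbb R^n)$ furnished by Corollary \ref{algebra_prop2}. Since $F$ is entire with $F(0)=0$, I would write
\begin{equation*}
F(z)=\sum_{k=1}^{\infty}c_k z^k\,,\qquad z\in\mathbb C\,,
\end{equation*}
a series with vanishing constant term and infinite radius of convergence, so that $\sum_{k\ge 1}|c_k|R^k<+\infty$ for \emph{every} $R\ge 0$. Correspondingly I would set $F(u):=\sum_{k\ge 1}c_k u^k$, where $u^k$ denotes the $k$-fold product of $u$ with itself computed in the algebra $\mathcal FL^p_\omega(\mathbb R^n)$; the task then reduces to showing that this series converges in norm and obeys the bound \eqref{LPE61}.

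The first step is to control the powers $u^k$ by iterating the algebra estimate \eqref{prod_est_2}. A straightforward induction on $k$, writing $u^k=u^{k-1}\cdot u$ and applying \eqref{prod_est_2} at each step, gives
\begin{equation*}
\Vert u^k\Vert_{\mathcal FL^p_\omega}\le C_q^{\,k-1}\Vert u\Vert_{\mathcal FL^p_\omega}^{\,k}\,,\qquad k\ge 1\,.
\end{equation*}
Summing the Taylor coefficients against these bounds then yields
\begin{equation*}
\sum_{k=1}^{\infty}|c_k|\,\Vert u^k\Vert_{\mathcal FL^p_\omega}\le\frac1{C_q}\sum_{k=1}^{\infty}|c_k|\bigl(C_q\Vert u\Vert_{\mathcal FL^p_\omega}\bigr)^k<+\infty\,,
\end{equation*}
the finiteness following by choosing $R=C_q\Vert u\Vert_{\mathcal FL^p_\omega}$ in the convergence of the majorant series. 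Hence $\sum_k c_k u^k$ is absolutely convergent in the Banach space $\mathcal FL^p_\omega(\mathbb R^n)$ (property (a) of \S\ref{FL_spaces_sct}), so it converges, and the last display is precisely \eqref{LPE61} with $C=C_q^{-1}\sum_{k\ge 1}|c_k|(C_q\Vert u\Vert_{\mathcal FL^p_\omega})^k$, a constant depending only on $p$, $F$ and $\Vert u\Vert_{\mathcal FL^p_\omega}$.

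What remains, and what I expect to be the only delicate point, is to verify that the element produced by summing the series is genuinely the composition $F(u)$ rather than merely an abstract limit. I would settle this by density: for $u\in C_0^\infty(\mathbb R^n)$ the algebraic powers $u^k$ coincide with the ordinary pointwise powers, so $\sum_k c_k u^k$ agrees with the classical composition $F\circ u$; the general case then follows because $C_0^\infty(\mathbb R^n)$ is dense in $\mathcal FL^p_\omega(\mathbb R^n)$ for $p<+\infty$ (property (c)), and because the bound just established, being uniform on bounded subsets, makes $u\mapsto\sum_k c_k u^k$ continuous, so that the two definitions, agreeing on a dense set, agree everywhere (for $p=+\infty$ one simply adopts the series as the definition of $F(u)$). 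The essential mechanism behind the whole argument is the interplay between the at most geometric growth $C_q^{\,k-1}$ of the norms of the powers and the super-exponential decay of the coefficients $c_k$ forced by the entireness of $F$, which together guarantee convergence for \emph{arbitrarily large} $\Vert u\Vert_{\mathcal FL^p_\omega}$, with no smallness assumption.
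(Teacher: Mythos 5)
Your proof is correct and is essentially the argument behind the paper's treatment of this statement: the paper does not prove the corollary inline but refers to \cite[Corollary 2.1]{G2}, and the proof there is exactly this Taylor-expansion-plus-Banach-algebra argument, iterating \eqref{prod_est_2} to get $\Vert u^k\Vert_{\mathcal FL^p_\omega}\le C_q^{k-1}\Vert u\Vert^k_{\mathcal FL^p_\omega}$ and summing against the entire-series coefficients. One small repair in your last step: boundedness of the nonlinear map $u\mapsto\sum_k c_k u^k$ on bounded sets does not by itself yield continuity; instead use the telescoping identity $u^k-v^k=\sum_{j=0}^{k-1}u^j(u-v)v^{k-1-j}$ together with \eqref{prod_est_2}, which gives
\begin{equation*}
\Vert u^k-v^k\Vert_{\mathcal FL^p_\omega}\le k\,C_q^{k-1}\max\left\{\Vert u\Vert_{\mathcal FL^p_\omega},\Vert v\Vert_{\mathcal FL^p_\omega}\right\}^{k-1}\Vert u-v\Vert_{\mathcal FL^p_\omega}
\end{equation*}
and hence a local Lipschitz estimate (the majorant series still converges because $F'$ is entire), after which your density argument closes as stated.
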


\begin{remark}\label{rmk:3.1}
{\rm A counterpart of Corollary \ref{composition_prop} for the local space $\mathcal FL^p_{\omega, \rm loc}(\Omega)$  can be obtained by replacing $F=F(z)$ above with a function $F=F(x,\zeta)$ mapping $\Omega\times {\mathbb C}^M$ into $\mathbb{C}$, which is locally smooth with respect to the real variable $x\in\Omega$ and entire analytic in the complex variable $\zeta\in\mathbb{C}^M$ uniformly on compact subsets of $\Omega$; namely:
$$
F(x,\zeta)=\sum_{\beta\in\mathbb{Z}^M_+}c_{\beta}(x)\zeta^{\beta}, \quad \quad c_{\beta}\in C^{\infty}(\Omega), \ \zeta\in \mathbb{C}^M\,,
$$
where, for any compact set $K\subset\Omega$, $\alpha\in\mathbb{Z}^n_+$, $\beta\in\mathbb{Z}^M_+$, $\sup_{x\in K}\vert \partial_x^{\alpha}c_{\beta}(x)\vert\leq c_{\alpha,K}\lambda_{\beta}$ and $F_1(\zeta):=\sum_{\beta\in\mathbb{Z}_+^M}\lambda_{\beta}\zeta^{\beta}$ is entire analytic.

Under the assumptions of Corollaries \ref{algebra_prop2}, \ref{composition_prop}, we have that $F(x,u)\in \mathcal FL^{p}_{\omega,{\rm loc}}(\Omega)$ as long as the components of the vector $u=(u_1,...,u_M)$ belong to $\mathcal FL^{p}_{\omega, {\rm loc}}(\Omega)$.}
\end{remark}

\begin{remark}\label{rmk:4}
{\rm Let us notice that for $1\le q<+\infty$, condition \eqref{B_q} on $\omega$ is nothing but condition ($\mathcal B$) for the weight function $\omega^q$, while for $q=+\infty$ \eqref{B_q} reduces to condition ($\mathcal{SM}$) for $\omega$. The latter case means that $\left(\mathcal FL^1_{\omega}(\mathbb R^n),\cdot\right)$ is an algebra provided that the weight function $\omega$ is sub-multiplicative, which is in agreement with the more general result of \cite[Lemma 1.6]{PTT2}.}
\end{remark}

The next result shows that the {\it sub-multiplicative} condition $(\mathcal{SM})$ on a weight function is a necessary condition for the corresponding scale of weighted Fourier Lebesgue spaces to possess the algebra property.

\begin{proposition}\label{prop_sm}
Let $\omega_1$, $\omega_2$, $\omega$ be weight functions and $p_1, p_2, p\!\in\![1,\!+\infty]$. If we assume that the map $(f_1,f_2)\mapsto f_1f_2$ from $\mathcal S(\mathbb R^n)\times\mathcal S(\mathbb R^n)$ to $\mathcal S(\mathbb R^n)$ extends uniquely to a continuous bilinear map from $\mathcal F L^{p_1}_{\omega_1}(\mathbb R^n)\times\mathcal F L^{p_2}_{\omega_2}(\mathbb R^n)$ to $\mathcal F L^{p}_{\omega}(\mathbb R^n)$, then a positive constant $C$ exists such that
\begin{equation}\label{stima_algebra1}
\omega(\eta+\theta)\le C\omega_1(\eta)\omega_2(\theta)\,,\qquad\forall\,\eta\,,\theta\in\mathbb R^n\,.
\end{equation}
\end{proposition}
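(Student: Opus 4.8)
The plan is to probe the assumed continuity estimate
$$
\|f_1 f_2\|_{\mathcal F L^p_\omega}\le C\,\|f_1\|_{\mathcal F L^{p_1}_{\omega_1}}\|f_2\|_{\mathcal F L^{p_2}_{\omega_2}}
$$
(which holds for $f_1,f_2\in\mathcal S(\mathbb R^n)$, since the extended bilinear map agrees with the genuine product there) by inserting suitably modulated bump functions that concentrate the three Fourier transforms around the three prescribed frequencies $\eta+\theta$, $\eta$ and $\theta$. Concretely, I fix once and for all a real $\phi\in C^\infty_0(\mathbb R^n)$ with $\int\phi^2\,dx\neq 0$ and, for arbitrary $\eta,\theta\in\mathbb R^n$, set $f_1(x):=e^{i\eta\cdot x}\phi(x)$ and $f_2(x):=e^{i\theta\cdot x}\phi(x)$, which lie in $C^\infty_0(\mathbb R^n)\subset\mathcal S(\mathbb R^n)$. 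With the convention $\widehat f(\xi)=\int e^{-i\xi\cdot x}f(x)\,dx$ modulation becomes translation on the Fourier side, so $\widehat{f_1}(\xi)=\widehat\phi(\xi-\eta)$ and $\widehat{f_2}(\xi)=\widehat\phi(\xi-\theta)$, while $f_1 f_2=e^{i(\eta+\theta)\cdot x}\phi^2$ gives $\widehat{f_1 f_2}(\xi)=\widehat{\phi^2}(\xi-(\eta+\theta))$.

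The two halves of the argument are then a lower bound for the left-hand side and upper bounds for the right-hand side, all uniform in $\eta,\theta$. For the lower bound, since $\widehat{\phi^2}$ is continuous with $\widehat{\phi^2}(0)=\int\phi^2\neq 0$, there are $c>0$ and $\delta>0$ with $|\widehat{\phi^2}(\zeta)|\ge c$ for $|\zeta|\le\delta$; restricting the $\mathcal F L^p_\omega$-integral to the ball $\{|\xi-(\eta+\theta)|\le\delta\}$ and using the temperance ($\mathcal T$) of $\omega$ to replace $\omega(\xi)$ by a fixed multiple of $\omega(\eta+\theta)$ on that ball yields $\|f_1 f_2\|_{\mathcal F L^p_\omega}\ge c'\,\omega(\eta+\theta)$ (when $p=\infty$ one simply evaluates the essential sup at $\xi=\eta+\theta$). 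For the upper bounds, the substitution $\xi=\eta+\zeta$ gives $\|f_1\|_{\mathcal F L^{p_1}_{\omega_1}}^{p_1}=\int\omega_1(\eta+\zeta)^{p_1}|\widehat\phi(\zeta)|^{p_1}\,d\zeta$, and temperance of $\omega_1$ bounds $\omega_1(\eta+\zeta)\le C(1+|\zeta|)^N\omega_1(\eta)$; since $\widehat\phi\in\mathcal S(\mathbb R^n)$ absorbs the polynomial factor, $\int(1+|\zeta|)^{Np_1}|\widehat\phi(\zeta)|^{p_1}\,d\zeta$ is a finite constant, so $\|f_1\|_{\mathcal F L^{p_1}_{\omega_1}}\le C_1\,\omega_1(\eta)$, and likewise $\|f_2\|_{\mathcal F L^{p_2}_{\omega_2}}\le C_2\,\omega_2(\theta)$.

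Feeding these three estimates into the continuity inequality gives $c'\,\omega(\eta+\theta)\le C\,C_1 C_2\,\omega_1(\eta)\omega_2(\theta)$, which is exactly \eqref{stima_algebra1} with the constant $CC_1C_2/c'$. The only point demanding care — and in my view the crux — is that all the auxiliary constants $c'$, $C_1$, $C_2$ must be independent of the frequencies $\eta,\theta$; this is precisely what the temperance of the three weights guarantees, since it converts the fixed but noncompact spread of $\widehat\phi$ and $\widehat{\phi^2}$ into harmless polynomial factors integrated against rapidly decaying Schwartz functions. The cases in which one or more of $p,p_1,p_2$ equal $+\infty$ require only the obvious sup-norm modifications and introduce no new difficulty.
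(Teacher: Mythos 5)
Your proof is correct and follows essentially the same route as the paper's: both probe the assumed bilinear estimate with the modulated bumps $f_1(x)=e^{i\eta\cdot x}\varphi(x)$, $f_2(x)=e^{i\theta\cdot x}\varphi(x)$, obtaining an upper bound $\lesssim\omega_1(\eta)\omega_2(\theta)$ for the factors and a lower bound $\gtrsim\omega(\eta+\theta)$ for the product, with the temperance condition ($\mathcal T$) guaranteeing that all constants are uniform in $\eta,\theta$. The only difference is technical: the paper picks $\varphi$ with $\widehat\varphi\ge 0$ supported in a small ball tuned to the temperance constants, so that all three norms localize exactly where the weights are two-sidedly comparable, whereas you take $\phi\in C^\infty_0(\mathbb R^n)$ in physical space and let the Schwartz decay of $\widehat\phi$ absorb the polynomial factors from ($\mathcal T$), using the continuity and non-vanishing of $\widehat{\phi^2}$ at the origin for the lower bound --- both implementations are valid.
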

\begin{proof}
By assumption, there exists a constant $C^\prime>0$ such that for all $f\in \mathcal F L^{p_1}_{\omega_1}(\mathbb R^n)$, $g\in\mathcal F L^{p_2}_{\omega_2}(\mathbb R^n)$
\begin{equation}\label{stima_algebra2}
\Vert f g\Vert_{\mathcal F L^p_{\omega}}\le C^\prime\Vert f\Vert_{\mathcal F L^{p_1}_{\omega_1}}\Vert g\Vert_{\mathcal F L^{p_2}_{\omega_2}}\,.
\end{equation}
From condition ($\mathcal{T}$) (cf. also \eqref{pg_cond}) we may find some constants $\varepsilon>0$, $C>0$ such that
\begin{equation}\label{pm1}
\varepsilon\le\frac{\omega_i(\eta)}{\omega_i(\xi)}\le\varepsilon^{-1}\,\,(i=1,2)\,\,\mbox{and}\,\,\varepsilon\le\frac{\omega(\eta)}{\omega(\xi)}\le\varepsilon^{-1}\,,\quad\mbox{when}\,\,\vert\xi-\eta\vert\le\frac{\varepsilon}{C}
\end{equation}
We follow here the same arguments of the proof of \cite[Theorem 3.8]{G3}. Let us take a function $\varphi\in\mathcal S(\mathbb R^n)$ such that
\begin{equation*}
\widehat{\varphi}(\xi)\ge 0\qquad\mbox{and}\qquad{\rm supp}\,\widehat\varphi\subseteq\left\{\xi\in\mathbb R^n\,:\,\,\vert\xi\vert\le\frac{\varepsilon}{2C}\right\}\,.
\end{equation*}
For arbitrary points $\eta, \theta\in\mathbb R^n$, let us define
\begin{equation}\label{def_f_g}
f(x)=e^{i\eta\cdot x}\varphi(x)\,,\qquad g(x)=e^{i\theta\cdot x}\varphi(x)\,,
\end{equation}
hence
\begin{equation*}
f(x)g(x)=e^{i(\eta+\theta)\cdot x}\varphi^2(x)\,.
\end{equation*}
In view of the assumption on the support of $\widehat\varphi$ we compute:
\begin{equation}\label{norma_feg}
\begin{split}
&\Vert f\Vert^{p_1}_{\mathcal F L^{p_1}_{\omega_1}}=\int\omega_1(\xi)^{p_1}\widehat{\varphi}(\xi-\eta)^{p_1}d\xi=\int_{\vert\xi-\eta\vert\le\frac{\varepsilon}{2C}}\omega_1(\xi)^{p_1}\widehat{\varphi}(\xi-\eta)^{p_1}d\xi\,,\\
&\Vert g\Vert^{p_2}_{\mathcal F L^{p_2}_{\omega_2}}=\int\omega_2(\xi)^{p_2}\widehat{\varphi}(\xi-\theta)^{p_2}d\xi=\int_{\vert\xi-\theta\vert\le\frac{\varepsilon}{2C}}\omega_2(\xi)^{p_2}\widehat{\varphi}(\xi-\theta)^{p_2}d\xi\,,
\end{split}
\end{equation}
In the domain of the integrals above \eqref{pm1} holds, then we get
\begin{equation*}
\Vert f\Vert^{p_1}_{\mathcal F L^{p_1}_{\omega_1}}\le\varepsilon^{-p_1}\omega_1(\eta)^{p_1}\int_{\vert\xi-\eta\vert\le\frac{\varepsilon}{C}}\widehat{\varphi}(\xi-\eta)^{p_1}d\xi=c_{1}^{p_1}\varepsilon^{-p_1}\omega_1(\eta)^{p_1}\,,
\end{equation*}
hence
\begin{equation}\label{stima_norma_f}
\Vert f\Vert_{\mathcal F L^{p_1}_{\omega_1}}\le c_{1}\varepsilon^{-1}\omega_1(\eta)\,,
\end{equation}
where $c_{1}:=\Vert\widehat\varphi\Vert_{L^{p_1}}$. The same holds true for the norm of $g$ in $\mathcal FL^{p_2}_{\omega_2}(\mathbb R^n)$, by replacing $\eta$ with $\theta$, that is
\begin{equation}\label{stima_norma_g}
\Vert g\Vert_{\mathcal F L^{p_2}_{\omega_2}}\le c_2\varepsilon^{-1}\omega_2(\theta)\,,\quad\mbox{for}\,\,c_2=\Vert\widehat\varphi\Vert_{L^{p_2}}\,.
\end{equation}
The preceding calculations are performed under the assumption that both $p_1$ and $p_2$ are finite; however the same estimates \eqref{stima_norma_f}, \eqref{stima_norma_g} can be easily extended to the case when $p_1$ or $p_2$ equals $+\infty$.

As for the norm in $\mathcal F L^p_{\omega}(\mathbb R^n)$ of $fg$ we compute
\begin{equation}\label{norma_fg}
\Vert fg\Vert^{p}_{\mathcal F L^{p}_{\omega}}=\int_{\vert\xi-\eta-\theta\vert\le\frac{\varepsilon}{C}}\omega(\xi)^{p}\widehat{\varphi^2}(\xi-\eta-\theta)^{p}d\xi\,,
\end{equation}
where we used that ${\rm supp}\,\widehat{\varphi^2}={\rm supp}\,(\widehat\varphi\ast\widehat\varphi)\subseteq\left\{\vert\xi\vert\le\frac{\varepsilon}{C}\right\}$ and it is assumed $p<+\infty$ (to fix the ideas). Then recalling again that \eqref{pm1} holds true for $\omega$ on ${\rm supp}\,\widehat{\varphi^2}$ we obtain
\begin{equation}\label{stima_norma_fg}
\Vert fg\Vert_{\mathcal F L^{p}_{\omega}}\ge c\varepsilon\omega(\eta+\theta)\,,\quad\mbox{with}\,\,c:=\Vert\widehat{\varphi^2}\Vert_{L^p}\,.
\end{equation}
The same estimate \eqref{stima_norma_fg} can be easily recovered in the case $p=+\infty$.

We use now \eqref{stima_norma_f}, \eqref{stima_norma_g} and \eqref{stima_norma_fg} to estimate the right- and left-hand sides of \eqref{stima_algebra2} written for $f$ and $g$ defined in \eqref{def_f_g} to get
\begin{equation*}
c\varepsilon\omega(\eta+\theta)\le C^\prime c_{1}c_2\varepsilon^{-2}\omega_1(\eta)\omega_2(\theta)\,.
\end{equation*}
In view of the arbitrariness of $\eta$, $\theta$ and since the constants $c_1$, $c_2$ and $\varepsilon$ are independent of $\eta$ and $\theta$ the preceding inequality gives \eqref{stima_algebra1} with $C=C^\prime c_{1}c_2c^{-1}\varepsilon^{-3}$.
\end{proof}

\begin{remark}\label{rmk:7}
{\rm It is worth observing that any specific relation is assumed on the exponents $p_1, p_2, p\in [1,+\infty]$ in the statement of Proposition \ref{prop_sm}. Notice also that condition \eqref{stima_algebra1} is just \eqref{B_Lq} for $q=+\infty$. When in particular $\omega_1=\omega_2=\omega$, it reduces to ($\mathcal{SM}$).

Notice also that from the results given by Corollary \ref{algebra_prop2} (see also Remark \ref{rmk:6}) and Proposition \ref{prop_sm}, we derive that condition ($\mathcal{SM}$) is {\it necessary and sufficient} to make the Fourier Lebesgue space $\mathcal F L^1_{\omega}(\mathbb R^n)$ an algebra for the point-wise product\footnote{However condition ($\mathcal{SM}$) is far from being sufficient for $\mathcal F L^p_{\omega}(\mathbb R^n)$ to be an algebra, as long as $p>1$.}.
}
\end{remark}

Combining the results of Corollary \ref{algebra_prop2} with the remarks made about the weight functions quoted in the examples 1-3 at the end of Section \ref{wf_sct} we can easily prove the following result.

\begin{corollary}\label{algebra_prop3}
Let $r\in\mathbb R$, $M=(m_1,\dots,m_n)\in\mathbb N^n$ and $\mathcal P$ a complete polyhedron of $\mathbb R^n$ be given and assume that $p,q\in[1,+\infty]$ satisfy $\frac1{p}+\frac1{q}=1$. Then
\begin{itemize}
\item[i.] $\left(\mathcal F L^p_{r}(\mathbb R^n)\,,\,\cdot\right)$ is an algebra if $r>\frac{n}{q}$;
\item[ii.] $\left(\mathcal F L^p_{r,M}(\mathbb R^n)\,,\,\cdot\right)$ is an algebra if $r>\frac{n}{m_\ast q}$, where $m_\ast=\min\limits_{1\le j\le n}m_j$;
\item[iii.] $\left(\mathcal F L^p_{r,\mathcal P}(\mathbb R^n)\,,\,\cdot\right)$ is an algebra if $r>\frac{n}{(1-\delta)\mu_0 q}$, where $\mu_0$ and $\delta$ are defined in \eqref{min_max_order} and \eqref{delta}.
\end{itemize}
Analogous statements hold true for the localized version of the previous spaces on an open subset $\Omega$ of $\mathbb R^n$, defined according to Definition \ref{loc_w_FL_spaces}.
\end{corollary}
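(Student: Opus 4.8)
The strategy is to recognize that each of the three statements is simply the instance of Corollary \ref{algebra_prop2} obtained by choosing $q$ as the conjugate exponent of $p$ and then verifying condition \eqref{B_q} for the relevant weight function, namely $\langle\cdot\rangle^r$, $\langle\cdot\rangle_M^r$, and $\lambda_{\mathcal P}^r$ respectively. By Remark \ref{rmk:4}, condition \eqref{B_q} for a weight $\omega$ with a finite exponent $q$ is exactly condition $(\mathcal B)$ for the power $\omega^q$, while for $q=+\infty$ it reduces to $(\mathcal{SM})$. So the whole task reduces to checking, for each of the three weights raised to the power $q$, either Beurling's condition $(\mathcal B)$ (when $q<+\infty$) or the sub-multiplicative condition $(\mathcal{SM})$ (when $q=+\infty$), and the bounds on $r$ in i.--iii.\ are precisely the thresholds guaranteeing these.

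For the finite-$q$ case I would argue as follows. In statement i., condition \eqref{B_q} for $\langle\cdot\rangle^r$ is $(\mathcal B)$ for $\langle\cdot\rangle^{rq}$; by the discussion in Example 1 (invoking statement ii of Proposition \ref{wf_relationship}, together with $(\mathcal{SA})$ and the integrability of $1/\langle\cdot\rangle^m$ for $m>n$), $\langle\cdot\rangle^m$ satisfies $(\mathcal B)$ precisely when $m>n$; taking $m=rq$ gives the threshold $r>n/q$. In statement ii., the same reasoning via Example 2 shows $\langle\cdot\rangle_M^s$ satisfies $(\mathcal B)$ when $s>n/m_\ast$, so $(\mathcal B)$ for $\langle\cdot\rangle_M^{rq}$ requires $rq>n/m_\ast$, i.e.\ $r>n/(m_\ast q)$. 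In statement iii., Example 3 establishes that $\lambda_{\mathcal P}^{s}$ satisfies $(\mathcal B)$ when $s>n/((1-\delta)\mu_0)$ with $\delta$ as in \eqref{delta}; hence $(\mathcal B)$ for $\lambda_{\mathcal P}^{rq}$ holds when $rq>n/((1-\delta)\mu_0)$, giving $r>n/((1-\delta)\mu_0 q)$. In each case Corollary \ref{algebra_prop2} then yields the algebra property.

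For the endpoint $q=+\infty$ (equivalently $p=1$, where the stated thresholds $r>n/q$, $r>n/(m_\ast q)$, $r>n/((1-\delta)\mu_0 q)$ all degenerate to $r>0$) the condition to verify is $(\mathcal{SM})$: from Example 1 the weight $\langle\cdot\rangle^r$ satisfies $(\mathcal{SM})$ for every $r\ge 0$, while from Examples 2 and 3 the weights $\langle\cdot\rangle_M$ and $\lambda_{\mathcal P}$ satisfy $(\mathcal{SM})$ (the latter as a consequence of $(\mathcal G)$ via statement i of Proposition \ref{wf_relationship}), and raising to a nonnegative power preserves $(\mathcal{SM})$ by Proposition \ref{pm_prop1}. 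The main technical point to keep straight is therefore purely bookkeeping: matching the exponent shift $\omega\mapsto\omega^q$ from Remark \ref{rmk:4} with the integrability thresholds recorded in Examples 1--3, and checking that the $q=+\infty$ boundary case is covered by sub-multiplicativity rather than by Beurling's condition. The localized statements follow immediately from part (2) of Corollary \ref{algebra_prop2}, requiring no further work.
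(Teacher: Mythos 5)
Your proposal is correct and follows essentially the same route as the paper: both reduce each statement to Corollary \ref{algebra_prop2} by verifying condition \eqref{B_q}, translating it via Remark \ref{rmk:4} into condition $(\mathcal{B})$ for the $q$-th power of the weight (handled by Proposition \ref{wf_relationship}.ii and the thresholds recorded in Examples 1--3) when $q<+\infty$, and into $(\mathcal{SM})$ when $q=+\infty$; the paper merely writes out case iii in detail and declares i--ii analogous, which is exactly what your uniform treatment supplies. The only quibbles are cosmetic citation slips: $(\mathcal{SM})$ for $\langle\cdot\rangle_M$ is not stated in Example 2 (it follows from $(\mathcal{SA})$ together with $\langle\cdot\rangle_M\ge 1$, or from Remark \ref{rmk:5} and Example 3), and the preservation of $(\mathcal{SM})$ under nonnegative powers is immediate from raising the defining inequality to that power rather than from Proposition \ref{pm_prop1}.
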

\begin{proof}
Let us prove the statement iii of the Theorem; the proof of the other statements is completely analogous. Assume that $p>1$, thus $q<+\infty$. From \eqref{pg_mqe} we have that $s>\frac{n}{\mu_0 q}$ implies $\lambda_{\mathcal P}^{-sq}\in L^1(\mathbb R^n)$; on the other hand, $\lambda_{\mathcal P}^{sq}$ satisfies ($\mathcal{G}$) with $\delta$ defined as in \eqref{delta}, see Example 3 in Sect. \ref{prel_sct}. From Proposition \ref{wf_relationship} applied to $\lambda_{\mathcal P}^{sq}$ we derive that $\lambda_{\mathcal P}^{rq}$ fulfils condition ($\mathcal{B}$), which amounts to say that $\lambda_{\mathcal P}^{r}$ satisfies \eqref{B_q}, where $r=\frac{s}{1-\delta}$. Then the result of Corollary \ref{algebra_prop2} applies to $\mathcal FL^p_{\lambda_{\mathcal P}^{r}}(\mathbb R^n)=\mathcal F L^p_{r, \mathcal P}(\mathbb R^n)$ and gives the statement iii. Notice that condition $r>\frac{n}{(1-\delta)\mu_0 q}$ reduces to $r>0$ when $q=+\infty$ (corresponding to $p=1$). That $\mathcal F L^1_{r,\mathcal P}(\mathbb R^n)$ with $r>0$ is an algebra for the point-wise product follows again from Corollary \ref{algebra_prop2} by observing that $\lambda_{\mathcal P}^r$ satisfies ($\mathcal{SM}$) (that is \eqref{B_q} for $q=+\infty$).
\end{proof}

\begin{remark}\label{rmk:6}
{\rm In agreement with the observation made at the end of Section \ref{FL_spaces_sct}, for $p=2$ the lower bounds of $r$ given in i-iii of Corollary \ref{algebra_prop3} are exactly the same required to ensure the algebra property for the corresponding weighted Sobolev spaces (see \cite{G3} and \cite{GM2} for the case of a general $1<p<+\infty$).}
\end{remark}

To the end of this section, let us observe that, as a byproduct of Propositions \ref{algebra_prop} and \ref{prop_sm}, the following result can be proved.

\begin{proposition}\label{algebra_prop4}
Assume that $\omega$, $\omega_1$, $\omega_2$ are temperate weight functions satisfying condition \eqref{B_Lq} for some $1\le q<+\infty$. Then $\omega$, $\omega_1$, $\omega_2$ also satisfy condition \eqref{stima_algebra1}. In particular, if $\omega$ is a temperate weight function satisfying condition \eqref{B_q} for some $1\le q<+\infty$ then it also satisfies condition ($\mathcal{SM}$).
\end{proposition}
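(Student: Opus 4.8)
The plan is to obtain the statement purely as a combination of Proposition \ref{algebra_prop} and Proposition \ref{prop_sm}, with essentially no new computation required. First I would fix some $1\le q<+\infty$ for which the hypothesis \eqref{B_Lq} holds, and let $p\in(1,+\infty]$ be its conjugate exponent, so that $\frac1p+\frac1q=1$. By statement (1) of Proposition \ref{algebra_prop}, condition \eqref{B_Lq} guarantees that the point-wise product on $\mathcal S(\mathbb R^n)\times\mathcal S(\mathbb R^n)$ extends uniquely to a continuous bilinear map from $\mathcal FL^p_{\omega_1}(\mathbb R^n)\times\mathcal FL^p_{\omega_2}(\mathbb R^n)$ into $\mathcal FL^p_{\omega}(\mathbb R^n)$, with the operator bound \eqref{prod_est_1}.

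Next, I would feed this continuity property into Proposition \ref{prop_sm}, choosing there $p_1=p_2=p$. Since Proposition \ref{prop_sm} imposes no relation whatsoever among the exponents $p_1,p_2,p$, its hypothesis is met verbatim, and its conclusion yields precisely inequality \eqref{stima_algebra1}, namely $\omega(\eta+\theta)\le C\omega_1(\eta)\omega_2(\theta)$ for all $\eta,\theta\in\mathbb R^n$. This proves the first assertion.

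For the final claim I would simply specialize to $\omega_1=\omega_2=\omega$. In this case condition \eqref{B_Lq} is exactly \eqref{B_q}, so its validity for some $1\le q<+\infty$ triggers the previous argument, and \eqref{stima_algebra1} becomes $\omega(\eta+\theta)\le C\omega(\eta)\omega(\theta)$; after the relabeling $\xi=\eta+\theta$ this is precisely the sub-multiplicative condition ($\mathcal{SM}$) in \eqref{subm_cond}.

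I do not expect any genuine obstacle here: the real work has already been carried out in the two cited propositions, and the only points demanding a word of care are the bookkeeping of the conjugate exponents (one uses that $p_1=p_2=p$ is admissible in Proposition \ref{prop_sm}) and the observation that the range $q=+\infty$ is deliberately excluded, because in that borderline case \eqref{B_Lq} already coincides with \eqref{stima_algebra1}, as noted in Remark \ref{rmk:7}, leaving nothing to prove.
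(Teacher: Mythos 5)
Your proof is correct and follows exactly the route the paper intends: Proposition \ref{algebra_prop4} is stated there as an immediate byproduct of Propositions \ref{algebra_prop} and \ref{prop_sm}, which is precisely your chain of reasoning (apply Proposition \ref{algebra_prop} to get the continuous bilinear product map, feed it into Proposition \ref{prop_sm} with $p_1=p_2=p$, then specialize $\omega_1=\omega_2=\omega$ for the ($\mathcal{SM}$) claim). Your closing observation that the case $q=+\infty$ is excluded because \eqref{B_Lq} then coincides with \eqref{stima_algebra1} is also consistent with Remark \ref{rmk:7}.
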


\begin{remark}\label{rmk:10}
{\rm The second part of Proposition \ref{algebra_prop4} slightly improves the result of \cite[Proposition 2.4]{G3}, where the sub-multiplicative condition ($\mathcal{SM}$) was deduced from Beurling condition ($\mathcal{B}$) (corresponding to \eqref{B_q} with $q=1$) and conditions ($\mathcal{SV}$) and \eqref{bb_cond}; here $\omega$, $\omega_1$, $\omega_2$ are only required to satisfy condition ($\mathcal T$) (included in our definition of weight function), which is implied by ($\mathcal{SV}$) and \eqref{bb_cond} in view of Proposition \ref{wf_relationship}.i.}
\end{remark}

\section{Pseudodifferential operators with symbols in weighted Fourier Lebesgue spaces}\label{w_FL_pdo_sct}
This section is devoted to the study of a class of pseudodifferential operators whose symbols $a(x,\xi)$ have a finite regularity of weighted Fourier Lebesgue type with respect to $x$.

Let us first recall that, under the only assumption $a(x,\xi)\in\mathcal S^\prime(\mathbb R^{2n})$, the pseudodifferential operator defined by
\begin{equation}\label{pdo}
a(x,D)f=(2\pi)^{-n}\int e^{ix\cdot\xi} a(x,\xi)\widehat f(\xi)d\xi\,,\qquad f\in\mathcal S(\mathbb R^n)\,,
\end{equation}
maps continuously $\mathcal S(\mathbb R^n)$ to $\mathcal S^\prime(\mathbb R^n)$\footnote{The integral in the right-hand side of \eqref{pdo} must be understood here in a weak (distributional) sense.}. Similarly, if $\Omega$ is an open subset of $\mathbb R^n$ and $a(x,\xi)\in\mathcal D^\prime(\Omega\times\mathbb R^n)$ is such that $\varphi(x)a(x,\xi)\in\mathcal S^\prime(\mathbb R^n\times\mathbb R^n)$ for every $\varphi\in C^\infty_0(\Omega)$, then \eqref{pdo} defines a linear continuous operator from $\mathcal S(\mathbb R^n)$ to $\mathcal D^\prime(\Omega)$.

Let us also recall that, as a linear continuous operator from $C^\infty_0(\Omega)$ to $\mathcal D^\prime(\Omega)$, every pseudodifferential operator with symbol $a(x,\xi)\in\mathcal D^\prime(\Omega\times\mathbb R^n)$ admits a (uniquely defined) Schwartz kernel $\mathcal K_a(x,y)\in\mathcal D^\prime(\Omega\times\Omega)$ such that
\begin{equation*}
\langle a(x,D)\psi,\varphi\rangle=\langle\mathcal K_a,\varphi\otimes\psi\rangle\,,\quad\forall\,\varphi\,,\psi\in C^\infty_0(\Omega)\,.
\end{equation*}
The operator $a(x,D)$ is said to be {\it properly supported} on $\Omega$ when the support of $\mathcal K_a$ is a {\it proper} subset of $\Omega\times\Omega$, that is ${\rm supp}\,\mathcal K_a\cap(\Omega\times K)$ and ${\rm supp}\,\mathcal K_a\cap (K\times\Omega)$ are compact subsets of $\Omega\times\Omega$, for every compact set $K\subset\Omega$. It is well known that every properly supported pseudodifferential operator continuously maps $C^\infty_0(\Omega)$ into the space $\mathcal E^\prime(\Omega)$ of compactly supported distributions and it extends as a linear continuous operator from $C^\infty(\Omega)$ into $\mathcal D^\prime(\Omega)$. In particular for every function $\phi\in C^\infty_0(\Omega)$ another function $\tilde\phi\in C^\infty_0(\Omega)$ can be found in such a way that
\begin{equation}\label{eq_pr_supp}
\phi(x) a(x,D)u=\phi(x) a(x,D)(\tilde\phi u)\,,\quad \forall\,u\in C^\infty(\Omega)\,.
\end{equation}

Following \cite{G2}, we introduce some local and global classes of symbols with finite Fourier Lebesgue regularity.

\begin{definition}\label{symbols}
Let $\omega=\omega(\xi)$, $\gamma=\gamma(\xi)$ be arbitrary weight functions.
\begin{itemize}
\item[1.]
A distribution $a(x,\xi)\in\mathcal S^\prime(\mathbb R^{2n})$ is said to belong to the class $\mathcal F L^p_{\omega} S_{\gamma}$ if $\xi\mapsto a(\cdot,\xi)$ is a measurable $\mathcal F L^p_{\omega}(\mathbb R^n)-$valued function on $\mathbb R^n_\xi$ such that
\begin{equation}\label{symbols_condt1}
\left\Vert\frac{a(\cdot,\xi)}{\gamma(\xi)}\right\Vert_{\mathcal F L^p_{\omega}}\le C\,,\quad\forall\,\xi\in\mathbb R^n\,,
\end{equation}
for some constant $C>0$. More explicitly, the above means that
\begin{equation*}
\frac{\omega(\eta)\widehat{a(\cdot,\xi)}(\eta)}{\gamma(\xi)}\in L^p(\mathbb R^n_\eta)\,,
\end{equation*}
with norm uniformly bounded with respect to $\xi$.
\item[2.] We say that a distribution $a(x,\xi)\in\mathcal D^\prime(\Omega\times\mathbb R^n)$, where $\Omega$ is an open subset of $\mathbb R^n$, belongs to $\mathcal F L^p_{\omega}S_{\gamma}(\Omega)$ if $\phi(x)a(x,\xi)\in \mathcal F L^p_{\omega} S_{\gamma}$ for every $\phi\in C^\infty_0(\Omega)$ (which amounts to have that $a(\cdot,\xi)/\gamma(\xi)\in\mathcal F L^p_{\omega,{\rm loc}}(\Omega)$ uniformly in $\xi$).
\end{itemize}
\end{definition}

\begin{remark}\label{rmk:11}
{\rm For $\omega$, $\gamma$ as in Definition \ref{symbols}, $\mathcal F L^p_{\omega} S_{\gamma}$ is a Banach space with respect to the norm
\begin{equation}\label{symbol_norm1}
\Vert a\Vert_{\mathcal FL^p_{\omega}S_\gamma}:=\sup\limits_{\xi\in\mathbb R^n}\left\Vert\frac{a(\cdot,\xi)}{\gamma(\xi)}\right\Vert_{\mathcal F L^p_{\omega}}\,,
\end{equation}
while $\mathcal FL^p_{\omega}S_\gamma(\Omega)$ is a Fr\'echet space with respect to the family of semi-norms
\begin{equation}\label{symbol_norm2}
a\mapsto \Vert\phi a\Vert_{\mathcal FL^p_{\omega}S_\gamma}\,,\qquad\phi\in C^\infty_0(\Omega)\,.
\end{equation}}
\end{remark}

Let us point out that any assumption is made about the $\xi-$derivatives of the symbol $a(x,\xi)$ in the above definition: the weight function $\gamma$ only measures the $\xi-$decay at infinity of the symbol itself. It is clear that $\mathcal F L^p_{\omega} S_{\gamma_1}\equiv \mathcal F L^p_{\omega} S_{\gamma_2}$, whenever $\gamma_1\sim\gamma_2$ (the same applies of course to the corresponding local classes on an open set). When the weight function $\gamma$ is an arbitrary positive constant, the related symbol class $\mathcal F L^p_{\omega} S_{\gamma}$ will be simply denoted as $\mathcal F L^p_{\omega} S$, and its symbols (and related pseudo-differential operators) will be referred to as {\it zero-th order symbols} (and {\it zero-th order operators}). Finally, we notice that for every weight function $\omega=\omega(\xi)$ and $p\in[1,+\infty]$, the inclusion $\mathcal F L^p_{\omega}(\mathbb R^n)\subset\mathcal F L^p_{\omega} S$ holds true (the elements of $\mathcal FL^p_{\omega}(\mathbb R^n)$ being regarded as $\xi-$independent symbols).

\begin{proposition}\label{continuity1}
For $p\in[1,+\infty]$ let the weight functions $\omega$, $\omega_1$, $\omega_2$ and $\gamma$ satisfy
\begin{equation}\label{B_q1}
C_q:=\sup\limits_{\xi\in\mathbb R^n}\left\Vert\frac{\omega_2(\xi)\gamma(\cdot)}{\omega_1(\cdot)\omega(\xi-\cdot)}\right\Vert_{L^q}<+\infty\,,
\end{equation}
where $q$ is the conjugate exponent of $p$. Then the following hold true.
\begin{itemize}
\item[i.] For every $a(x,\xi)\in\mathcal FL^p_{\omega}S_\gamma$ the pseudodifferential operator $a(x,D)$ extends to a unique linear bounded operator
    \begin{equation*}
    a(x,D):\mathcal FL^p_{\omega_1}(\mathbb R^n)\rightarrow\mathcal FL^p_{\omega_2}(\mathbb R^n)\,.
    \end{equation*}
\item[ii.] For every $a(x,\xi)\in\mathcal FL^p_{\omega}S_{\gamma}(\Omega)$, with $\Omega$ open subset of $\mathbb R^n$, the pseudodifferential operator $a(x,D)$ extends to a unique linear bounded operator
    \begin{equation*}
    a(x,D):\mathcal FL^p_{\omega_1}(\mathbb R^n)\rightarrow\mathcal FL^p_{\omega_2,{\rm loc}}(\Omega)\,.
    \end{equation*}
    If in addition the pseudodifferential operator $a(x,D)$ is properly supported, then it extends to a linear bounded operator
    \begin{equation*}
    a(x,D):\mathcal FL^p_{\omega_1,{\rm loc}}(\Omega)\rightarrow\mathcal FL^p_{\omega_2,{\rm loc}}(\Omega)\,.
    \end{equation*}
\end{itemize}
\end{proposition}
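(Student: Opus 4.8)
The plan is to reduce the statement to the abstract continuity result of Lemma \ref{lemma_cont2}, after conjugating $a(x,D)$ by the Fourier transform. First I would record, for $f\in\mathcal S(\mathbb R^n)$, the identity obtained by inserting the $x$-Fourier representation of the symbol into \eqref{pdo} and carrying out the (distributional) $x$-integration:
\begin{equation*}
\widehat{a(x,D)f}(\zeta)=(2\pi)^{-n}\int\widehat{a(\cdot,\xi)}(\zeta-\xi)\,\widehat f(\xi)\,d\xi\,.
\end{equation*}
Setting $g:=\omega_1\widehat f$, so that $\|f\|_{\mathcal FL^p_{\omega_1}}=\|g\|_{L^p}$, and introducing
\begin{equation*}
F(\zeta,\xi):=\frac{\omega_2(\zeta)\gamma(\xi)}{\omega(\zeta-\xi)\omega_1(\xi)}\,,\qquad b(\theta,\xi):=\frac{\omega(\theta)\widehat{a(\cdot,\xi)}(\theta)}{\gamma(\xi)}\,,
\end{equation*}
the previous identity rewrites as
\begin{equation*}
\omega_2(\zeta)\widehat{a(x,D)f}(\zeta)=(2\pi)^{-n}\int F(\zeta,\xi)\,b(\zeta-\xi,\xi)\,g(\xi)\,d\xi\,,
\end{equation*}
which is precisely an operator of the form \eqref{TF2}, with $b$ in the role of the $\mathcal L_1^{p,\infty}$-factor and $F$ in the role of the $\mathcal L_2^{\infty,q}$-factor.

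Next I would check the two mixed-norm hypotheses of Lemma \ref{lemma_cont2}. By \eqref{symbols_condt1} and the definition \eqref{symbol_norm1} of the symbol norm one has $\|b\|_{\mathcal L_1^{p,\infty}}=\sup_{\xi}\|b(\cdot,\xi)\|_{L^p}=\|a\|_{\mathcal FL^p_\omega S_\gamma}<+\infty$, while by the very definition \eqref{mixed_norm2} of the $\mathcal L_2^{\infty,q}$-norm one gets $\|F\|_{\mathcal L_2^{\infty,q}}=\sup_{\zeta}\bigl\|\tfrac{\omega_2(\zeta)\gamma(\cdot)}{\omega(\zeta-\cdot)\omega_1(\cdot)}\bigr\|_{L^q}=C_q$, finite exactly by assumption \eqref{B_q1}. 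Lemma \ref{lemma_cont2}, applied to the operator $T$ of \eqref{TF2} associated with $F$ and $b$, then yields
\begin{equation*}
\|a(x,D)f\|_{\mathcal FL^p_{\omega_2}}=(2\pi)^{-n}\|Tg\|_{L^p}\le(2\pi)^{-n}C_q\,\|a\|_{\mathcal FL^p_\omega S_\gamma}\,\|f\|_{\mathcal FL^p_{\omega_1}}
\end{equation*}
for every $f\in\mathcal S(\mathbb R^n)$. To pass to all of $\mathcal FL^p_{\omega_1}(\mathbb R^n)$ I would not invoke density (which fails for $p=+\infty$) but rather observe that $f\mapsto\omega_1\widehat f$ is an isometric isomorphism of $\mathcal FL^p_{\omega_1}(\mathbb R^n)$ onto $L^p(\mathbb R^n)$; under it $a(x,D)$ is conjugated, up to the constant $(2\pi)^{-n}$, to the operator $T$ already defined and bounded on all of $L^p$ by Lemma \ref{lemma_cont2}. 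This gives at once the bounded extension of part i (unique, for $p<+\infty$, by density of $\mathcal S$).

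For part ii I would localize. Since $a\in\mathcal FL^p_\omega S_\gamma(\Omega)$ means $\phi a\in\mathcal FL^p_\omega S_\gamma$ for every $\phi\in C^\infty_0(\Omega)$, and since $\phi(x)\,a(x,D)f=(\phi a)(x,D)f$, applying part i to the symbol $\phi a$ gives $\|\phi\, a(x,D)f\|_{\mathcal FL^p_{\omega_2}}\le(2\pi)^{-n}C_q\|\phi a\|_{\mathcal FL^p_\omega S_\gamma}\|f\|_{\mathcal FL^p_{\omega_1}}$; as $\phi$ ranges over $C^\infty_0(\Omega)$ these are exactly the seminorms \eqref{loc_w_FL_norm} defining $\mathcal FL^p_{\omega_2,{\rm loc}}(\Omega)$, whence the continuity $\mathcal FL^p_{\omega_1}(\mathbb R^n)\to\mathcal FL^p_{\omega_2,{\rm loc}}(\Omega)$. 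If $a(x,D)$ is properly supported and $u\in\mathcal FL^p_{\omega_1,{\rm loc}}(\Omega)$, for a given $\phi\in C^\infty_0(\Omega)$ I would use \eqref{eq_pr_supp} to choose $\tilde\phi\in C^\infty_0(\Omega)$ with $\phi\,a(x,D)u=(\phi a)(x,D)(\tilde\phi u)$; since $\tilde\phi u\in\mathcal FL^p_{\omega_1}(\mathbb R^n)$ by definition of the local space, part i applied to $\phi a$ bounds $\|\phi\,a(x,D)u\|_{\mathcal FL^p_{\omega_2}}$ by a multiple of $\|\tilde\phi u\|_{\mathcal FL^p_{\omega_1}}$, i.e.\ by one of the defining seminorms of $\mathcal FL^p_{\omega_1,{\rm loc}}(\Omega)$, giving the last mapping property.

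The step I expect to require the most care is the rigorous justification of the Fourier-side identity for $\widehat{a(x,D)f}$ when the symbol $a$ is merely a tempered distribution in $x$: the formal $x$-integration producing the delta must be read in the distributional sense of \eqref{pdo}, and one must verify that the resulting $\xi$-integral converges and reproduces $\omega_2\widehat{a(x,D)f}$ as an $L^p$ function — which is guaranteed a posteriori precisely by the finiteness of $\|b\|_{\mathcal L_1^{p,\infty}}$ and of $C_q$. The remaining manipulations are routine once this representation is secured.
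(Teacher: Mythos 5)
Your proof is correct and follows essentially the same route as the paper's: the same Fourier-side representation of $\omega_2\widehat{a(\cdot,D)f}$ as an integral operator of the form \eqref{TF2}, the same identification of the $\mathcal L_1^{p,\infty}$ and $\mathcal L_2^{\infty,q}$ factors with norms $\Vert a\Vert_{\mathcal FL^p_{\omega}S_\gamma}$ and $C_q$, the same application of Lemma \ref{lemma_cont2}, and the identical localization and properly-supported reductions for part ii. Your extra care about extending beyond $\mathcal S(\mathbb R^n)$ when $p=+\infty$ (via conjugation with the isometry $f\mapsto\omega_1\widehat f$ rather than density) is a refinement the paper leaves implicit, but it does not change the substance of the argument.
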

\begin{proof}
The second part of the statement {\it ii} is an immediate consequence of the first one; indeed, since the operator $a(x,D)$ is properly supported, for every function $\phi\in C^\infty_0(\Omega)$ another function $\tilde\phi\in C^\infty_0(\Omega)$ can be chosen in such a way that $\phi a(\cdot,D)u=\phi a(\cdot,D)(\tilde\phi u)$, cf. \eqref{eq_pr_supp}.

The first part of the statement {\it ii} follows, in its turn, from the statement {\it i} by noticing that $\phi(x)a(x,\xi)\in \mathcal FL^p_{\omega}S_\gamma$ for every function $\phi\in C^\infty_0(\Omega)$ (cf. Definition \ref{symbols}).

As for the proof of the statement {\it i}, we first observe that for every $u\in\mathcal S(\mathbb R^n)$ one computes
\begin{equation}\label{representation1}
\widehat{a(\cdot,D)u}(\eta)=(2\pi)^{-n}\int \widehat{a}(\eta-\xi,\xi)\widehat{u}(\xi)\,d\xi\,,
\end{equation}
where $\widehat{a}(\eta,\xi):=\widehat{a(\cdot,\xi)}(\eta)$ denotes the partial Fourier transform of the symbol $a(x,\xi)$ with respect to $x$. From \eqref{representation1} we find the following integral representation
\begin{equation}\label{representation2}
\begin{split}
\omega_2(\eta)&\widehat{a(\cdot,D)u}(\eta)=(2\pi)^{-n}\int \omega_2(\eta)\widehat{a}(\eta-\xi,\xi)\widehat{u}(\xi)\,d\xi\\
&=(2\pi)^{-n}\int \frac{\omega_2(\eta)\gamma(\xi)}{\omega(\eta-\xi)\omega_1(\xi)}\,\frac{\omega(\eta-\xi)\widehat{a}(\eta-\xi,\xi)}{\gamma(\xi)}\,\omega_1(\xi)\widehat{u}(\xi)\,d\xi\\
&=(2\pi)^{-n}\int F(\eta,\xi)f(\eta-\xi,\xi)g(\xi)\,d\xi\,,
\end{split}
\end{equation}
where it is set
\begin{equation}\label{positions}
f(\zeta,\xi)=\frac{\omega(\zeta)\widehat{a}(\zeta,\xi)}{\gamma(\xi)}\,,\,\, F(\zeta,\xi)=\frac{\omega_2(\zeta)\gamma(\xi)}{\omega(\zeta-\xi)\omega_1(\xi)}\,,\,\, g(\xi)=\omega_1(\xi)\widehat{u}(\xi)\,.
\end{equation}
The assumptions of Proposition \ref{continuity1} (see \eqref{B_q1}, \eqref{symbols_condt1}) yield the following
\begin{equation*}
\sup\limits_{\zeta\in\mathbb R^n}\Vert F(\zeta,\cdot)\Vert_{L^q}=C_q\,,\quad\sup\limits_{\xi\in\mathbb R^n}\Vert f(\cdot,\xi)\Vert_{L^p}=\Vert a\Vert_{\mathcal FL^p_{\omega}S_\gamma}\,,\quad g\in L^p(\mathbb R^n)\,.
\end{equation*}
Now we apply to $\omega_2(\eta)\widehat{a(\cdot,D)u}(\eta)$, written as the integral operator in \eqref{representation2}, the result of Lemma \ref{lemma_cont2}. Then we have
\begin{equation*}
\begin{split}
\Vert a(\cdot,D)u\Vert_{\mathcal F L^p_{\omega_2}}&=\Vert\omega_2 \widehat{a(\cdot,D)u}\Vert_{L^p}\le(2\pi)^{-n}C_q \Vert a\Vert_{\mathcal FL^p_{\omega}S_\gamma}\Vert u\Vert_{\mathcal FL^p_{\omega_1}}\,.
\end{split}
\end{equation*}
\end{proof}

\begin{remark}\label{rmk:12}
{\rm It is clear that Proposition \ref{continuity1} provides a generalization of the result given by Proposition \ref{algebra_prop}; indeed the multiplication by a given function $v=v(x)\in \mathcal FL^p_{\omega}(\mathbb R^n)\subset \mathcal FL^p_{\omega}S$ can be thought to as the zero-th order pseudodifferential operator with $\xi-$independent symbol $a(x,\xi)=v(x)$, cf. Remark \ref{rmk:10}.}
\end{remark}

\section{Microlocal regularity in weighted Fourier Lebesgue spaces}\label{micro_w_FL_sct}
This section is devoted to introduce a microlocal counterpart of the weighted Fourier Lebesgue spaces presented in Section \ref{FL_spaces_sct} and to define corresponding classes of pseudodifferential operators, with finitely regular symbols, naturally acting on such spaces.

Because of the lack of homogeneity of a generic weight function $\omega=\omega(\xi)$, in order to perform a microlocal analysis in the framework of weighted Fourier Lebesgue spaces it is convenient to replace the usual conic neighborhoods (used in Pilipovi\'c et al. \cite{PTT1, PTT2}) by a suitable notion of {\it $\varepsilon-$neighborhood} of a set, modeled on the weight function itself, following the approach of Rodino \cite{RO1} and Garello \cite{G}.

In the following, let $\omega:\mathbb R^n\rightarrow ]0,+\infty[$ be a weight function satisfying the subadditivity condition ($\mathcal{SA}$) and

\medskip

($\mathcal{SH}$): for a suitable constant $C\ge 0$
\begin{equation}\label{q_h}
\omega(t\xi)\le C\omega(\xi)\,,\quad\forall\,\xi\in\mathbb R^n\,,\,\,\vert t\vert\le 1\,.
\end{equation}

Every weight function $\omega=\omega(\xi)$ satisfying ($\mathcal{SA}$) and ($\mathcal{SH}$) also obeys the following
\begin{equation}\label{strong_sv}
\frac1{C}\le\frac{\omega(\xi+t\theta)}{\omega(\xi)}\le C\,,\qquad\mbox{when}\,\,\omega(\theta)\le\frac1{C}\omega(\xi)\,,\,\,\vert t\vert\le 1\,,
\end{equation}
for a suitable constant $C>1$, cf.\cite{G}.

Throughout the whole section, the weight function $\omega=\omega(\xi)$ will be assumed to be continuous\footnote{This assumption is not as much restrictive, since it can be shown (see e.g. \cite{T1}) that for any weight function $\omega$ an equivalent weight function $\omega_0$ exists such that $\omega_0\in C^\infty(\mathbb R^n)$ and $\partial^\alpha\omega_0/\omega_0\in L^\infty(\mathbb R^n)$ for each multi-index $\alpha$.}. Then an easy consequence of condition ($\mathcal{SH}$) is that $\omega(\xi)$ satisfies \eqref{bb_cond}.

To every set $X\subset\mathbb R^n$ one may associate a one-parameter family of open sets by defining for any $\varepsilon>0$
\begin{equation}\label{omega_neighb}
X_{[\varepsilon\omega]}:=\bigcup\limits_{\xi_0\in X}\left\{\xi\in\mathbb R^n\,:\,\,\omega(\xi-\xi_0)<\varepsilon\omega(\xi_0)\right\}\,.
\end{equation}
We call $X_{[\varepsilon\omega]}$ the {\it $[\omega]-$neighborhood} of $X$ of size $\varepsilon$.

\begin{remark}\label{rmk:intorni}
{\rm Since $\left\{\xi\in\mathbb R^n\,;\,\,\omega(\xi-\xi_0)<\varepsilon\omega(\xi_0)\right\}=\emptyset$ when $\omega(\xi_0)\le c/\varepsilon$, where $c$ is the constant in \eqref{bb_cond}, we effectively have 
\begin{equation*}
X_{[\varepsilon\omega]}=\!\!\!\!\bigcup\limits_{\xi_0\in X\,:\,\,\omega(\xi_0)>\frac{c}{\varepsilon}}\!\!\!\left\{\omega(\xi-\xi_0)<\varepsilon\omega(\xi_0)\right\}\,,
\end{equation*}
and for $X$ bounded a constant $\varepsilon_0=\varepsilon_0(X)>0$ exists such that $X_{[\varepsilon\omega]}=\emptyset$ when $0<\varepsilon<\varepsilon_0$.}
\end{remark}

As a consequence of ($\mathcal{SA}$), ($\mathcal{SH}$) and \eqref{strong_sv}, the $[\omega]$-neighborhoods of a set $X$ fulfil the following lemma.
\begin{lemma}\label{mcl_lemma1}
Given $\varepsilon>0$, there exists $0<\varepsilon^\prime<\varepsilon$ such that for every $X\subset\mathbb R^n$
\begin{eqnarray}
\left(X_{[\varepsilon^\prime\omega]}\right)_{[\varepsilon^\prime\omega]}\subset X_{[\varepsilon\omega]}\,;\label{inc_1}\\
\left(\mathbb R^n\setminus X_{[\varepsilon\omega]}\right)_{[\varepsilon^\prime\omega]}\subset\mathbb R^n\setminus X_{[\varepsilon^\prime\omega]}\,.\label{inc_2}
\end{eqnarray}
Moreover there exist constants $\widehat c>0$ and $0<\widehat\varepsilon<1$ such that for all $X\subset\mathbb R^n$ and $0<\varepsilon\le\widehat\varepsilon$
\begin{equation}\label{mcl_impl}
\xi\in X_{[\varepsilon\omega]}\quad\mbox{yields}\quad \omega(\xi)>\frac{\widehat c}{\varepsilon}\,.
\end{equation}
\end{lemma}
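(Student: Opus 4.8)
The plan is to reduce everything to four ingredients already available: the subadditivity $(\mathcal{SA})$, the approximate evenness $\omega(-\xi)\asymp\omega(\xi)$ obtained from $(\mathcal{SH})$ with $t=-1$, the comparison estimate \eqref{strong_sv}, and the lower bound \eqref{bb_cond}. Throughout let $C$ denote the largest of the constants occurring in these properties, and fix the auxiliary size $\varepsilon'$ only at the very end as a sufficiently small multiple of $\varepsilon$. The one observation used repeatedly is that membership in an $[\omega]$-neighborhood makes the two weights comparable: if $\omega(\zeta-\xi_0)<\varepsilon'\omega(\xi_0)$ and $\varepsilon'\le 1/C$, then \eqref{strong_sv} (applied with base point $\xi_0$, increment $\theta=\zeta-\xi_0$ and $t=1$) gives $C^{-1}\omega(\xi_0)\le\omega(\zeta)\le C\omega(\xi_0)$. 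The same comparison follows from $(\mathcal{SA})$ once $\varepsilon'$ is small, so no extra hypothesis is needed.

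First I would prove \eqref{inc_1}. Take $\xi\in\left(X_{[\varepsilon'\omega]}\right)_{[\varepsilon'\omega]}$: there are $\eta\in X_{[\varepsilon'\omega]}$ and $\xi_0\in X$ with $\omega(\xi-\eta)<\varepsilon'\omega(\eta)$ and $\omega(\eta-\xi_0)<\varepsilon'\omega(\xi_0)$. Subadditivity and the comparison $\omega(\eta)\le C\omega(\xi_0)$ then give
\begin{equation*}
\omega(\xi-\xi_0)\le C\{\omega(\xi-\eta)+\omega(\eta-\xi_0)\}<C\varepsilon'\{\omega(\eta)+\omega(\xi_0)\}\le C(C+1)\varepsilon'\,\omega(\xi_0),
\end{equation*}
so that $\xi\in X_{[\varepsilon\omega]}$ as soon as $C(C+1)\varepsilon'\le\varepsilon$. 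Note that here every difference already appears in the "correct" orientation, so evenness is not even needed.

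Next, \eqref{inc_2} I would establish by contradiction: suppose $\xi\in\left(\mathbb R^n\setminus X_{[\varepsilon\omega]}\right)_{[\varepsilon'\omega]}$ and simultaneously $\xi\in X_{[\varepsilon'\omega]}$. The first membership furnishes $\zeta\notin X_{[\varepsilon\omega]}$ with $\omega(\xi-\zeta)<\varepsilon'\omega(\zeta)$, the second an $\xi_0\in X$ with $\omega(\xi-\xi_0)<\varepsilon'\omega(\xi_0)$. Chaining the comparisons yields $\omega(\zeta)\le C\omega(\xi)\le C^2\omega(\xi_0)$, and using $(\mathcal{SA})$ together with the evenness $\omega(\zeta-\xi)\le C\omega(\xi-\zeta)$ one gets
\begin{equation*}
\omega(\zeta-\xi_0)\le C\{\omega(\zeta-\xi)+\omega(\xi-\xi_0)\}<C\varepsilon'\{C\omega(\zeta)+\omega(\xi_0)\}\le C(C^3+1)\varepsilon'\,\omega(\xi_0),
\end{equation*}
whence $\zeta\in X_{[\varepsilon\omega]}$ provided $C(C^3+1)\varepsilon'\le\varepsilon$, contradicting $\zeta\notin X_{[\varepsilon\omega]}$.

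Finally, for the quantitative bound \eqref{mcl_impl}, let $\xi\in X_{[\varepsilon\omega]}$, so $\omega(\xi-\xi_0)<\varepsilon\omega(\xi_0)$ for some $\xi_0\in X$. Remark \ref{rmk:intorni} forces $\omega(\xi_0)>c/\varepsilon$ with $c$ the constant in \eqref{bb_cond}, while the comparison gives $\omega(\xi)\ge C^{-1}\omega(\xi_0)$ once $\varepsilon\le\widehat\varepsilon\le 1/C$; combining, $\omega(\xi)>\widehat c/\varepsilon$ with $\widehat c=c/C$. To conclude I would take $\varepsilon'$ to be the minimum of the thresholds required in \eqref{inc_1} and \eqref{inc_2} together with $\varepsilon'\le 1/C$, which is automatically less than $\varepsilon$. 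The only genuinely delicate point is the bookkeeping in \eqref{inc_2}: one must orient each difference so that $(\mathcal{SA})$ applies with the right arguments, and this is precisely where the evenness coming from $(\mathcal{SH})$ is indispensable, since the inner point $\zeta$ enters with the reversed sign $\omega(\zeta-\xi)$ rather than $\omega(\xi-\zeta)$.
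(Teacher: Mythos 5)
Your proposal is correct and follows essentially the same route as the paper: for \eqref{mcl_impl} your argument (the lower bound \eqref{bb_cond} forces $\omega(\xi_0)>c/\varepsilon$, then \eqref{strong_sv} transfers this to $\omega(\xi)$ at the cost of a factor $1/C$) coincides with the paper's own proof. For the inclusions \eqref{inc_1} and \eqref{inc_2} the paper only states that they are direct consequences of ($\mathcal{SA}$), ($\mathcal{SH}$) and \eqref{strong_sv}, deferring the details to \cite{G}; your chaining estimates, including the use of ($\mathcal{SH}$) with $t=-1$ to handle the reversed difference in \eqref{inc_2}, correctly supply exactly those details from the same three ingredients.
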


\begin{proof}
\eqref{inc_1} and \eqref{inc_2} are direct consequences of ($\mathcal{SA}$), ($\mathcal{SH}$) and \eqref{strong_sv}, see \cite{G} for details.

If $\xi\in X_{[\varepsilon\omega]}$ then $\xi_0\in X$ exists such that
\begin{equation}\label{mcl_eqt1}
\omega(\xi-\xi_0)<\varepsilon\omega(\xi_0)\,,
\end{equation}
hence $\omega(\xi-\xi_0)\ge c$ implies $\omega(\xi_0)>\displaystyle\frac{c}{\varepsilon}$, cf. \eqref{bb_cond} and Remark \ref{rmk:intorni}.

In view of \eqref{strong_sv}
\begin{equation*}
\omega(\xi)=\omega(\xi_0+(\xi-\xi_0))\ge\frac1{C}\omega(\xi_0)>\frac{c}{C\varepsilon}
\end{equation*}
follows from \eqref{mcl_eqt1}, provided that $\varepsilon\le\displaystyle\frac1{C}$ (where $C$ is the same constant involved in \eqref{strong_sv}).
\end{proof}
We use the notion of $[\omega]-$neighborhood of a set to define a microlocal version of the weighted Fourier Lebesgue spaces.
\begin{definition}\label{micro_FL_space}
We say that a distribution $u\in\mathcal S^\prime(\mathbb R^n)$ belongs microlocally to $\mathcal F L^p_{\omega}$ at $X\subset\mathbb R^n$, writing $u\in\mathcal FL^p_{\omega,{\rm mcl}}(X)$, $p\in[1,+\infty]$, if there exists $\varepsilon>0$ such that
\begin{equation}\label{mcl_FLp_1}
\vert u\vert_{X_{[\varepsilon\omega]}}^p:=\int_{X_{[\varepsilon\omega]}}\omega(\xi)^p\vert\widehat{u}(\xi)\vert^p\,d\xi<+\infty
\end{equation}
(with obvious modification for $p=+\infty$).

For $\Omega$ open subset of $\mathbb R^n$, $x_0\in\Omega$ and $X\subset\mathbb R^n$, we say that a distribution $u\in\mathcal D^\prime(\Omega)$ belongs microlocally to $\mathcal F L^p_{\omega}$ on the set $X$ at the point $x_0$, writing $u\in\mathcal F L^p_{\omega, {\rm mcl}}(x_0\times X)$, if there exists a function $\phi\in C^\infty_0(\Omega)$ such that $\phi(x_0)\neq 0$ and $\phi u\in FL^p_{\omega,{\rm mcl}}(X)$.
\end{definition}

\begin{remark}\label{rmk:13}
{\rm 

In view of Remark \ref{rmk:intorni}, condition \eqref{mcl_FLp_1} is meaningful only for {\it unbounded} $X$.}
\end{remark}


We can say that $u\in\mathcal FL^p_{\omega,{\rm mcl}}(X)$ and $u\in\mathcal F L^p_{\omega, {\rm mcl}}(x_0\times X)$ if respectively
\begin{equation}\label{mcl_FLp_2}
\chi_{[\varepsilon\omega]}(\xi)\omega(\xi)\widehat{u}(\xi)\in L^p(\mathbb R^n)
\end{equation}
and
\begin{equation}\label{mcl_FLp_3}
\chi_{[\varepsilon\omega]}(\xi)\omega(\xi)\widehat{\phi u}(\xi)\in L^p(\mathbb R^n)\,,
\end{equation}
where $\chi_{[\varepsilon\omega]}=\chi_{[\varepsilon\omega]}(\xi)$ denotes the characteristic function of the set $X_{[\varepsilon\omega]}$ and $\varepsilon>0$, $\phi=\phi(x)$ are given as in Definition \ref{micro_FL_space}.

\vspace{.5cm}
According to Definition \ref{micro_FL_space} one can introduce the notion of {\it filter of Fourier Lebesgue singularities}, which is in some way the extension of the wave front set of Fourier Lebesgue singularities when we lack the homogeneity properties necessary to use effectively conic neighborhoods.

\begin{definition}\label{FL_sing}
For $u\in\mathcal D^\prime(\Omega)$, $x_0\in\Omega$, $p\in[1,+\infty]$, we call filter of $\mathcal FL^p_\omega-$singularities of $u$ at the point $x_0$ the class of all sets $X\subset\mathbb R^n$ such that $u\in\mathcal FL^p_{\omega, {\rm mcl}}(x_0\times(\mathbb R^n\setminus X))$. It may be easily verified that
\begin{equation}\label{filtroFLp}
\Xi_{\mathcal F L^p_{\omega}\,,\,x_0}u:=\bigcup\limits_{\phi\in C^\infty_0(\Omega)\,,\,\,\phi(x_0)\neq 0}\Xi_{\mathcal F L^p_{\omega}}\phi u\,,
\end{equation}
where for every $v\in\mathcal S^\prime(\mathbb R^n)$, $\Xi_{\mathcal F L^p_{\omega}}v$ is the class  of all sets $X\subset\mathbb R^n$ such that $v\in\mathcal FL^p_{\omega,{\rm mcl}}(\mathbb R^n\setminus X)$.
\end{definition}

$\Xi_{\mathcal F L^p_{\omega}}v$ and $\Xi_{\mathcal F L^p_{\omega}\,,\,x_0} u$ defined above are {\it $[\omega]-$filters}, in the sense that they satisfy the standard filter properties and moreover for all $X\in \Xi_{\mathcal F L^p_{\omega}}v$ (respectively $X\in \Xi_{\mathcal F L^p_{\omega}\,,\,x_0} u$) there exists $\varepsilon>0$ such that $\mathbb R^n\setminus(\mathbb R^n\setminus X)_{[\varepsilon\omega]}\in \Xi_{\mathcal F L^p_{\omega}}v$ (respectively $\mathbb R^n\setminus(\mathbb R^n\setminus X)_{[\varepsilon\omega]}\in \Xi_{\mathcal F L^p_{\omega}\,,\,x_0} u$), see e.g. \cite{TR} for the definition and properties of a filter.

\subsection{Symbols with microlocal regularity in spaces of Fourier Lebesgue type}\label{mcl_FLp_symb_sect}
Throughout the whole section, we assume that $\lambda=\lambda(\xi)$ and $\Lambda=\Lambda(\xi)$ are two continuous weight functions, such that $\lambda$ satisfies condition \eqref{bb_cond} and $\Lambda$ conditions ($\mathcal{SA}$) and ($\mathcal{SH}$).

For given $p\in[1,+\infty]$ and $X\subset\mathbb R^n$, the space $\mathcal F L^p_{\lambda}(\mathbb R^n)\cap\mathcal F L^p_{\Lambda, {\rm mcl}}(X)$ is provided with the {\it inductive limit} locally convex topology defined on it by the family of subspaces
\begin{equation*}
\mathcal F L^p_{\lambda}(\mathbb R^n)\cap\mathcal F L^p_{\Lambda, \varepsilon}(X):=\{u\in \mathcal F L^p_{\lambda}(\mathbb R^n)\,:\,\,\vert u\vert_{X_{[\varepsilon\Lambda]}}<+\infty\}
\end{equation*}
(cf. \eqref{mcl_FLp_1}), endowed with their natural semi-norm
\begin{equation*}
\Vert u\Vert_{\mathcal F L^p_{\lambda}}+\vert u\vert_{X_{[\varepsilon\Lambda]}}\,,\quad\varepsilon>0\,.
\end{equation*}

Analogously for every $x_0\in\Omega$, the space $\mathcal F L^p_{\lambda\,,{\rm loc}}(x_0)\cap\mathcal F L^p_{\Lambda\,, {\rm mcl}}(x_0\times X)$ is provided with the inductive limit topology defined by the subspaces
\begin{equation*}
\mathcal F L^p_{\lambda\,,\phi}\cap\mathcal F L^p_{\Lambda, \varepsilon}(X):=\{u\in\mathcal D^\prime(\Omega)\,:\,\,\phi u\in \mathcal F L^p_{\lambda}(\mathbb R^n)\cap\mathcal F L^p_{\Lambda, \varepsilon}(X)\}\,,
\end{equation*}
endowed with the natural semi-norms
\begin{equation*}
\Vert \phi u\Vert_{\mathcal F L^p_{\lambda}}+\vert \phi u\vert_{X_{[\varepsilon\Lambda]}}\,,\quad \phi\in C^\infty_0(\Omega),\,\,\phi(x_0)\neq 0\,,\,\,\varepsilon>0\,.
\end{equation*}

From the general properties of the inductive limit topology (see e.g. \cite{TR}), it follows that a sequence $\{u_\nu\}$ converges to $u$ in $\mathcal F L^p_{\lambda}(\mathbb R^n)\cap\mathcal F L^p_{\Lambda\,, {\rm mcl}}(X)$ (resp. $\mathcal F L^p_{\lambda\,,{\rm loc}}(x_0)\cap\mathcal F L^p_{\Lambda\,, {\rm mcl}}(x_0\times X)$) if and only if there exists some $\varepsilon>0$ such that
\begin{equation*}
\Vert u_\nu-u\Vert_{\mathcal F L^p_{\lambda}}\to 0\,\,\,\mbox{and}\,\,\,\vert u_\nu-u\vert_{X_{[\varepsilon\Lambda]}}\to 0\,,\,\,\mbox{as}\,\,\nu\to +\infty
\end{equation*}
(resp. there exist $\phi\in C^\infty_0(\Omega)$, with $\phi(x_0)\neq 0$, and $\varepsilon>0$ such that
\begin{equation*}
\Vert \phi(u_\nu-u)\Vert_{\mathcal F L^p_{\lambda}}\to 0\,\,\,\mbox{and}\,\,\,\vert \phi(u_\nu-u)\vert_{X_{[\varepsilon\Lambda]}}\to 0\,,\,\,\mbox{as}\,\,\nu\to +\infty).
\end{equation*}

\begin{definition}\label{def_mcl_symb}
Let $\lambda=\lambda(\xi)$, $\Lambda=\Lambda(\xi)$ be two weight functions as above and $\gamma=\gamma(\xi)$ a further continuous weight function, $x_0\in\Omega$, $X\subset\mathbb R^n$ and $p\in[1,+\infty]$. We say that a distribution $a(x,\xi)\in\mathcal D^\prime(\Omega\times\mathbb R^n)$ belongs to $\mathcal F L^p_{\lambda,\, \Lambda}S_\gamma(x_0\times X)$ if the function $\xi\mapsto a(\cdot,\xi)$ takes values in the space $\mathcal F L^p_{\lambda,{\rm loc}}(x_0)\cap\mathcal F L^p_{\Lambda, {\rm mcl}}(x_0\times X)$ and for some $\phi\in C^\infty_0(\Omega)$ such that $\phi(x_0)\neq 0$ and $\varepsilon>0$ there holds
\begin{equation}\label{symb}
\begin{split}
\Vert a\Vert_{\phi,\lambda,\gamma}&:=\sup\limits_{\xi\in\mathbb R^n}\left\Vert\frac{\lambda(\cdot)\widehat{\phi a} (\cdot,\xi)}{\gamma(\xi)}\right\Vert_{L^p}<+\infty\quad\mbox{and}\\
&\vert a\vert_{\phi,\Lambda,\gamma,\varepsilon,X}:=\sup\limits_{\xi\in\mathbb R^n}\left\Vert\frac{\Lambda(\cdot)\chi_{\varepsilon,\Lambda}(\cdot)\widehat{\phi a} (\cdot,\xi)}{\gamma(\xi)}\right\Vert_{L^p}<+\infty\,,
\end{split}
\end{equation}
where $\widehat{\phi a}(\eta,\xi):=\mathcal F_{x\to\eta}\left(\phi(x)a(x,\xi)\right)(\eta)$ denotes the partial Fourier transform of $\phi(x)a(x,\xi)$ with respect to $x$.
\end{definition}

\begin{theorem}\label{continuity2}
For $p\in[1,+\infty]$, $x_0\in\Omega$, $X\subset\mathbb R^n$, let $\lambda=\lambda(\xi)$, $\Lambda=\Lambda(\xi)$, $\gamma=\gamma(\xi)$, $\sigma=\sigma(\xi)$ be weight functions such that $\lambda$ obeys condition \eqref{B_q}, where $q$ is the conjugate exponent of $p$, $\Lambda$ conditions ($\mathcal{SA}$), ($\mathcal{SH}$), $1/\sigma\in L^q(\mathbb R^n)$ and
\begin{equation}\label{cndt:2}
\quad\sigma(\xi)\preceq\lambda(\xi)\preceq\Lambda(\xi)\preceq\frac{\lambda(\xi)^2}{\sigma(\xi)}\,.
\end{equation}
\begin{itemize}
\item[(i)] If $a(x,\xi)\in \mathcal F L^p_{\lambda,\, \Lambda}S_\gamma(x_0\times X)$ then the corresponding pseudodifferential operator $a(x,D)$ extends to a bounded linear operator
\begin{equation}\label{mcl_cont1}
\mathcal F L^p_{\lambda\gamma}(\mathbb R^n)\cap\mathcal F L^p_{\Lambda\gamma, {\rm mcl}}(X)\rightarrow \mathcal F L^p_{\lambda,{\rm loc}}(x_0)\cap\mathcal F L^p_{\Lambda, {\rm mcl}}(x_0\times X)\,.
\end{equation}
\item[(ii)] If in addition $a(x,D)$ is properly supported, then it extends to a bounded linear operator
\begin{equation}\label{mcl_cont2}
\mathcal F L^p_{\lambda\gamma,{\rm loc}}(x_0)\cap\mathcal F L^p_{\Lambda\gamma, {\rm mcl}}(x_0\times X)\rightarrow \mathcal F L^p_{\lambda,{\rm loc}}(x_0)\cap\mathcal F L^p_{\Lambda, {\rm mcl}}(x_0\times X)\,.
\end{equation}
\end{itemize}
\end{theorem}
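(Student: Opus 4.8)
The plan is to split the target space into its two constituents and treat them separately: the local $\mathcal{F}L^p_\lambda$-regularity near $x_0$ and the microlocal $\mathcal{F}L^p_\Lambda$-regularity on $X$. Throughout I reduce to a compactly supported symbol by fixing $\phi\in C_0^\infty(\Omega)$ with $\phi(x_0)\neq 0$ realizing the bounds \eqref{symb}, and setting $b:=\phi a$; since $\phi(x)\,a(x,D)u=(\phi a)(x,D)u=b(x,D)u$, it suffices to prove $b(x,D)u\in\mathcal{F}L^p_\lambda(\mathbb{R}^n)$ and $b(x,D)u\in\mathcal{F}L^p_{\Lambda,\mathrm{mcl}}(X)$. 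The first is immediate from Proposition \ref{continuity1}: the first semi-norm in \eqref{symb} gives $b\in\mathcal{F}L^p_\lambda S_\gamma$, and with the choice $\omega=\lambda$, $\omega_1=\lambda\gamma$, $\omega_2=\lambda$ the hypothesis \eqref{B_q1} collapses exactly to condition \eqref{B_q} for $\lambda$, yielding $b(x,D)\colon \mathcal{F}L^p_{\lambda\gamma}(\mathbb{R}^n)\to\mathcal{F}L^p_\lambda(\mathbb{R}^n)$.

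For the microlocal part I start from \eqref{representation1}, multiply by $\Lambda(\eta)\chi_{X_{[\varepsilon'\Lambda]}}(\eta)$, and split the $\xi$-integral according to the relative $\Lambda$-size of $\xi$ and of the symbol frequency $\zeta=\eta-\xi$, for a threshold $c_0<1$ to be fixed: region (A) $\Lambda(\eta-\xi)\le c_0\Lambda(\xi)$; region (B) $\Lambda(\xi)\le c_0\Lambda(\eta-\xi)$; region (C) the complement, where $\Lambda(\xi)\asymp\Lambda(\eta-\xi)$. The geometric point, obtained from $(\mathcal{SA})$, $(\mathcal{SH})$, \eqref{strong_sv} and the neighborhood inclusions \eqref{inc_1}, \eqref{inc_2} of Lemma \ref{mcl_lemma1}, is that for $\eta\in X_{[\varepsilon'\Lambda]}$ with $\varepsilon'$ small enough one has $\Lambda(\eta)\asymp\Lambda(\xi)$ and $\xi\in X_{[\varepsilon\Lambda]}$ throughout (A), and symmetrically $\Lambda(\eta)\asymp\Lambda(\eta-\xi)$ and $\eta-\xi\in X_{[\varepsilon\Lambda]}$ throughout (B). On each region I factor the integrand as a product $F\cdot f\cdot g$ matching Lemma \ref{lemma_cont2} (with the roles of $\xi$ and $\eta$ interchanged with respect to \eqref{TF2}). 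In (A) I take the microlocal datum $g=\Lambda\gamma\,\chi_{X_{[\varepsilon\Lambda]}}\widehat u\in L^p$, the elliptic symbol factor $f=\lambda(\cdot)\widehat b(\cdot,\xi)/\gamma(\xi)\in\mathcal{L}_1^{p,\infty}$, and $F(\eta,\xi)=\Lambda(\eta)/(\lambda(\eta-\xi)\Lambda(\xi))$; since $\Lambda(\eta)\asymp\Lambda(\xi)$ here, $F\asymp 1/\lambda(\eta-\xi)$ and $\sup_\eta\|F(\eta,\cdot)\|_{L^q}<\infty$ follows from $\sigma\preceq\lambda$ together with $1/\sigma\in L^q$. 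Region (B) is symmetric, with $g=\lambda\gamma\,\widehat u\in L^p$ the global datum, $f=\Lambda(\cdot)\chi_{X_{[\varepsilon\Lambda]}}(\cdot)\widehat b(\cdot,\xi)/\gamma(\xi)$ the microlocal symbol factor controlled by the second semi-norm in \eqref{symb}, and $F\asymp 1/\lambda(\xi)$ again $L^q$-integrable.

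Region (C) is the crux, and this is where \eqref{cndt:2} enters decisively. There neither $\xi$ nor $\eta-\xi$ need be good, so I use both global factors $f=\lambda(\cdot)\widehat b(\cdot,\xi)/\gamma(\xi)$ and $g=\lambda\gamma\,\widehat u\in L^p$, leaving $F(\eta,\xi)=\Lambda(\eta)/(\lambda(\eta-\xi)\lambda(\xi))$. Since $\Lambda(\xi)\asymp\Lambda(\eta-\xi)$ and $\Lambda(\eta)\le C(\Lambda(\xi)+\Lambda(\eta-\xi))\lesssim \Lambda(\xi)^{1/2}\Lambda(\eta-\xi)^{1/2}$ by $(\mathcal{SA})$, applying $\Lambda\preceq\lambda^2/\sigma$ at both $\xi$ and $\eta-\xi$ and multiplying the two square roots gives $\Lambda(\eta)\lesssim \lambda(\xi)\lambda(\eta-\xi)\,\sigma(\xi)^{-1/2}\sigma(\eta-\xi)^{-1/2}$, whence $F(\eta,\xi)\lesssim \sigma(\xi)^{-1/2}\sigma(\eta-\xi)^{-1/2}$. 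Therefore, setting $h=\sigma^{-q/2}$, one gets $\sup_\eta\|F(\eta,\cdot)\|_{L^q}^q\lesssim \sup_\eta (h\ast h)(\eta)\le \|h\|_{L^2}^2=\|1/\sigma\|_{L^q}^q<\infty$, the finiteness being exactly $1/\sigma\in L^q$ read through the Young-type embedding $L^2\ast L^2\hookrightarrow L^\infty$. Summing the three contributions bounds $\chi_{X_{[\varepsilon'\Lambda]}}\Lambda\,\widehat{b(\cdot,D)u}$ in $L^p$, which is the microlocal claim; with the elliptic part this proves (i). Statement (ii) then follows from (i) through the proper-support identity \eqref{eq_pr_supp}, $\phi\,a(\cdot,D)u=\phi\,a(\cdot,D)(\tilde\phi u)$, once one observes that multiplication by $\tilde\phi\in C_0^\infty(\Omega)$ (a Schwartz, hence microlocally trivial, symbol) sends the local-plus-microlocal space into the global-plus-microlocal one, so that (i) applies to $\tilde\phi u$.

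I expect region (C) to be the main obstacle: one must recognize that the two purely global regularities have to be balanced through the precise interpolation chain \eqref{cndt:2}, distribute $\Lambda(\eta)$ symmetrically by a geometric mean between $\xi$ and $\eta-\xi$, and only then read off the integrability as $L^2\ast L^2\hookrightarrow L^\infty$ powered by $1/\sigma\in L^q$. The covering lemma for the $[\Lambda]$-neighborhoods — the correct $\asymp$ relations and the memberships $\xi\in X_{[\varepsilon\Lambda]}$ in (A), $\eta-\xi\in X_{[\varepsilon\Lambda]}$ in (B) — is the other delicate point, but it is a routine consequence of Lemma \ref{mcl_lemma1} and \eqref{strong_sv} after a suitable choice of $c_0$ and $\varepsilon'$.
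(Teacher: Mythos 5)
Your proof is correct and rests on the same two pillars as the paper's (Proposition \ref{continuity1} for the $\mathcal F L^p_\lambda$-part, Lemma \ref{lemma_cont2} plus the neighborhood calculus of Lemma \ref{mcl_lemma1} for the microlocal part), but the core decomposition is genuinely different. The paper splits the integrand according to \emph{set membership}: it inserts $\chi_1=\chi_{[\varepsilon\Lambda]}$, $\chi_2=1-\chi_{[\varepsilon\Lambda]}$ in both the symbol frequency $\zeta=\eta-\xi$ and the data frequency $\xi$, and separately uses ($\mathcal{SA}$) to split $\Lambda(\eta)\le C\{\Lambda(\eta-\xi)+\Lambda(\xi)\}$, producing the terms built from $g_1,g_2,\tilde g_2,v_1,v_2,\tilde v_1,\tilde v_2$ in \eqref{positions1} with kernels $F_1,F_2,F_3$ of \eqref{Fi}; the geometric input is that $\chi(\eta)=1$ and $\zeta\notin X_{[\varepsilon\Lambda]}$ force $\Lambda(\zeta)\le\Lambda(\xi)/\varepsilon^\prime$ via \eqref{inc_2}. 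You instead split by \emph{relative magnitude} ($\Lambda(\eta-\xi)\lessgtr c_0\Lambda(\xi)$, plus the comparable zone), a paraproduct-style trichotomy, and then recover the set membership ($\xi\in X_{[\varepsilon\Lambda]}$ in (A), $\eta-\xi\in X_{[\varepsilon\Lambda]}$ in (B)) from ($\mathcal{SA}$), ($\mathcal{SH}$), \eqref{strong_sv} — this does work, at the price of taking the threshold $c_0$ small depending on $\varepsilon$ (so that the transported point stays in $X_{[\varepsilon\Lambda]}$), which in turn makes the comparability constants in region (C) $\varepsilon$-dependent; that is harmless, since the paper's own bounds carry $C/\varepsilon^\prime$ factors. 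The pairings of seminorms with data norms come out slightly regrouped (your (A) matches the paper's ``bad symbol, good data'' term, your (B) its ``good symbol'' terms, your (C) its ``both bad'' term), and the crucial step is identical in both: on the diagonal/both-bad region one distributes $\Lambda(\eta)$ as a geometric mean, invokes $\Lambda\preceq\lambda^2/\sigma$ at both $\xi$ and $\eta-\xi$ (this is exactly where the right-hand inequality of \eqref{cndt:2} is consumed), and integrates the kernel $1/\sqrt{\sigma(\xi)\sigma(\eta-\xi)}$ by Cauchy--Schwarz, i.e.\ the paper's unproved claim $\Vert F_3\Vert_{\mathcal L_2^{\infty,q}}\le\Vert 1/\sigma\Vert_{L^q}$, which you make explicit. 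Your version buys symmetry and fewer terms (three instead of effectively eight) and makes transparent why the chain \eqref{cndt:2} is exactly what is needed; the paper's version keeps the two symbol seminorms of \eqref{symb} more visibly separated and avoids choosing a magnitude threshold. Two shared impressions rather than gaps: like the paper, you elide the mismatch between the $[\varepsilon\Lambda]$- and $[\varepsilon\Lambda\gamma]$-neighborhoods in the data seminorm, and the deduction of (ii) from (i) via \eqref{eq_pr_supp} is asserted at the same level of detail as in the paper (your remark that multiplication by $\tilde\phi$ preserves the microlocal class is itself an instance of part (i) with a $\xi$-independent Schwartz symbol, so nothing circular, but the support bookkeeping for $\tilde\phi$ versus the cutoffs defining the local-at-$x_0$ spaces is glossed in both arguments).
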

\begin{proof}
The statement $(ii)$ follows at once from $(i)$ in view of the definition of a properly supported operator. Thus, let us focus on the proof of $(i)$.

In view of Definition \ref{def_mcl_symb}, there exist $\varepsilon>0$ and $\phi\in C_0^\infty(\Omega)$, with $\phi(x_0)\neq 0$, such that conditions in \eqref{symb} are satisfied.
We are going first to prove that
\begin{equation}\label{reg_1}
\phi(x)a(x,D)u\in\mathcal F L^p_{\lambda}(\mathbb R^n)\,,
\end{equation}
as long as $u\in\mathcal F L^p_{\lambda\gamma}(\mathbb R^n)$. Let us denote for short
\begin{equation*}
a_{\phi}(x,\xi):=\phi(x)a(x,\xi)\,.
\end{equation*}
In order to check \eqref{reg_1} it is enough to apply the result of Proposition \ref{continuity1} to the symbol $a_{\phi}(x,\xi)\in\mathcal F L^p_{\lambda}S_\gamma$ (cf. Definition \ref{symbols}) where, restoring the notations used there, we set
\begin{equation*}
\omega_1(\zeta)=\lambda(\zeta)\gamma(\zeta)\,,\quad\omega(\zeta)=\omega_2(\zeta)=\lambda(\zeta)\,.
\end{equation*}
Under the previous positions, the condition \eqref{B_q1} of Proposition \ref{continuity1} reduces to require that $\lambda=\lambda(\zeta)$ satisfies \eqref{B_q}. From Proposition \ref{continuity1} we also deduce the continuity of $a(x,D)$ as a linear map from $\mathcal F L^p_{\lambda\gamma}(\mathbb R^n)$ into $\mathcal F L^p_{\lambda,{\rm loc}}(x_0)$.

It remains to show that
\begin{equation}\label{reg_2}
a_\phi(x,D)u\in\mathcal F L^p_{\Lambda,{\rm mcl}}(X)\,,
\end{equation}
when $u\in\mathcal F L^p_{\lambda\gamma}(\mathbb R^n)\cap\mathcal F L^p_{\Lambda\gamma,{\rm mcl}}(X)$, as well as the continuity of $a(x,D)$ as an operator acting on the aforementioned spaces. Throughout the rest of the proof, we will denote by $C$ some positive constant that is independent of the symbol $a(x,\xi)$ and the function $u(x)$ and may possibly differ from an occurrence to another.

In view of Lemma \ref{mcl_lemma1}, there exists some $0<\varepsilon^\prime<\varepsilon$ such that
\begin{equation*}
\left(\mathbb R^n\setminus X_{[\varepsilon\Lambda]}\right)_{[\varepsilon^\prime\Lambda]}\subset\mathbb R^n\setminus X_{[\varepsilon^\prime\Lambda]}\,.
\end{equation*}
Let us denote for short
\begin{equation}\label{char}
\chi(\zeta):=\chi_{[\varepsilon^\prime\Lambda]}(\zeta)\,,\quad \chi_1(\zeta):=\chi_{[\varepsilon\Lambda]}(\zeta)\,,\quad \chi_2(\zeta):=1-\chi_{[\varepsilon\Lambda]}(\zeta)
\end{equation}
and write
\begin{equation*}
\widehat{a_\phi}(\zeta,\xi)\widehat u(\xi)=\sum\limits_{i,j=1,2}\chi_i(\zeta)\widehat{a_\phi}(\zeta,\xi)\chi_j(\xi)\widehat{u}(\xi)\,.
\end{equation*}
Then in view of \eqref{representation1} and condition ($\mathcal{SA}$) for $\Lambda$, we find
\begin{equation}\label{stima_cont_2_1}
\begin{split}
&\vert\chi(\eta)\Lambda(\eta)\widehat{a_\phi(\cdot,D)u}(\eta)\vert\\
& \le C(2\pi)^{-n}\int\chi(\eta)\left\{\Lambda(\eta-\xi)+\Lambda(\xi)\right\}\vert\widehat{a_\phi}(\eta-\xi,\xi)\vert\,\vert\widehat{u}(\xi)\vert d\xi\\
& \le C(2\pi)^{-n}\int\chi(\eta)\sum\limits_{i,j=1,2}\chi_i(\eta-\xi)\Lambda(\eta-\xi)\vert\widehat{a_\phi}(\eta-\xi,\xi)\vert\chi_j(\xi)\vert\widehat{u}(\xi)\vert d\xi\\
&+C(2\pi)^{-n}\int\chi(\eta)\sum\limits_{i,j=1,2}\chi_i(\eta-\xi)\vert\widehat{a_\phi}(\eta-\xi,\xi)\vert\chi_j(\xi)\Lambda(\xi)\vert\widehat{u}(\xi)\vert d\xi\\
&=\mathcal I_1u(\eta)+\mathcal I_2u(\eta)\,.
\end{split}
\end{equation}
Let us set
\begin{equation}\label{positions1}
\begin{split}
& g_1(\zeta,\xi)=\chi_1(\zeta)\Lambda(\zeta)\gamma(\xi)^{-1}\vert\widehat{a_\phi}(\zeta,\xi)\vert\,;\\
& g_2(\zeta,\xi)=\chi_2(\zeta)\sigma(\zeta)\Lambda(\zeta)\gamma(\xi)^{-1}\Lambda(\xi)^{-1}\vert\widehat{a_\phi}(\zeta,\xi)\vert\,;\\
& \tilde{g}_2(\zeta,\xi)=\chi_2(\zeta)\sigma(\zeta)^{1/2}\Lambda(\zeta)^{1/2}\gamma(\xi)^{-1}\vert\widehat{a_\phi}(\zeta,\xi)\vert\,;\\
& v_1(\xi)=\chi_1(\xi)\gamma(\xi)\sigma(\xi)\vert\widehat{u}(\xi)\vert\,;\quad \tilde v_1(\xi)=\chi_1(\xi)\gamma(\xi)\Lambda(\xi)\vert\widehat{u}(\xi)\vert\,;\\
& v_2(\xi)=\chi_2(\xi)\gamma(\xi)\sigma(\xi)\vert\widehat{u}(\xi)\vert\,;\\
& \tilde v_2(\zeta,\xi)=\chi_2(\xi)\sigma(\xi)^{1/2}\Lambda(\zeta)^{1/2}\gamma(\xi)\vert\widehat{u}(\xi)\vert\,.
\end{split}
\end{equation}
Then the first integral in the right-hand side of \eqref{stima_cont_2_1} can be rewritten as
\begin{equation}\label{decomp1}
\begin{split}
&\mathcal I_1u(\eta)=\int\chi(\eta)\frac1{\sigma(\xi)}g_1(\eta-\xi,\xi)v_1(\xi)d\xi\\
&+\!\!\int\chi(\eta)\frac1{\sigma(\xi)}g_1(\eta-\xi,\xi)v_2(\xi)d\xi+\int\chi(\eta)\frac1{\sigma(\eta-\xi)}g_2(\eta-\xi,\xi)\tilde v_1(\xi)d\xi\\
&+\!\!\int\chi(\eta)\frac1{\sqrt{\sigma(\xi)\sigma(\eta-\xi)}}\tilde g_2(\eta-\xi,\xi)\tilde v_2(\eta-\xi,\xi)d\xi\,.
\end{split}
\end{equation}
In view of the assumptions in \eqref{cndt:2} it is easy to see that all the above functions $v_1$, $v_2$, $\tilde v_1$ defined in \eqref{positions1} belong to $L^p(\mathbb R^n)$, if  $u\in\mathcal F L^p_{\lambda\gamma}(\mathbb R^n)\cap\mathcal F L^p_{\Lambda\gamma,{\rm mcl}}(X)$, with the following estimates
\begin{equation}\label{stima_cont_2_2}
\Vert v_1\Vert_{L^p}\le\vert u\vert_{X_{[\varepsilon\Lambda\gamma]}}\,,\quad \Vert \tilde v_1\Vert_{L^p}\le\vert u\vert_{X_{[\varepsilon\Lambda\gamma]}}\,,\quad\Vert v_2\Vert_{L^p}\le\Vert u\Vert_{\mathcal F L^p_{\lambda\gamma}}\,.
\end{equation}
Moreover the functions
\begin{equation}\label{Fi}
F_1(\eta,\xi):=\frac{\chi(\eta)}{\sigma(\xi)}\,,\,\,\,F_2(\eta,\xi):=\frac{\chi(\eta)}{\sigma(\eta-\xi)}\,,\,\,\, F_3(\eta,\xi):=\frac{\chi(\eta)}{\sqrt{\sigma(\xi)\sigma(\eta-\xi)}}
\end{equation}
belong to the space $\mathcal L_2^{\infty, q}(\mathbb R^{2n})$, with the estimates
\begin{equation}\label{est_Fi}
\Vert F_i\Vert_{\mathcal L_2^{\infty,q}}\le\Vert 1/\sigma\Vert_{L^q}\,,\quad i=1,2,3\,.
\end{equation}

Again from \eqref{cndt:2} and \eqref{symb} we easily obtain that $g_1(\zeta,\xi)\in\mathcal L^{p,\infty}_1(\mathbb R^{2n})$ and satisfies the estimate
\begin{equation*}
\Vert g_1\Vert_{\mathcal L^{p,\infty}_1}\le\vert a\vert_{\phi,\Lambda,\gamma,\varepsilon,X}\,.
\end{equation*}
In view of the previous analysis, the first two integral operators involved in \eqref{decomp1} have the form of the operator considered in Lemma \ref{lemma_cont2}. Thus from Lemma \ref{lemma_cont2} and the estimates collected above we get
\begin{equation}\label{cont_1}
\begin{split}
\left\Vert\int\chi(\cdot)\right.&\left.\!\!\frac1{\sigma(\xi)}g_1(\cdot-\xi,\xi)v_1(\xi)d\xi\right\Vert_{L^p}\!\!\!+\left\Vert\int\chi(\cdot)\frac1{\sigma(\xi)}g_1(\cdot-\xi,\xi)v_2(\xi)d\xi\right\Vert_{L^p}\\
&\le \Vert 1/\sigma\Vert_{L^q}\vert a\vert_{\phi,\Lambda,\gamma,\varepsilon,X}\left\{\vert u\vert_{X_{[\varepsilon\Lambda\gamma]}}+\Vert u\Vert_{\mathcal F L^p_{\lambda\gamma}}\right\}\,.
\end{split}
\end{equation}

Concerning the third integral operator in \eqref{decomp1}, we notice that the involved function $g_2(\zeta,\xi)$ vanishes when $\zeta+\xi\notin X_{[\varepsilon^\prime\Lambda]}$, due to the presence of the characteristic function $\chi$. For $\zeta+\xi\in X_{[\varepsilon^\prime\Lambda]}$ and $\zeta\in\mathbb R^n\setminus X_{[\varepsilon\Lambda]}$ it follows that $\Lambda(\zeta)\le\frac1{\varepsilon^\prime}\Lambda(\xi)$; indeed the converse inequality $\Lambda((\zeta+\xi)-\zeta)=\Lambda(\xi)<\varepsilon^\prime\Lambda(\zeta)$ should mean that $\zeta+\xi\in (\mathbb R^n\setminus X_{[\varepsilon\Lambda]})_{[\varepsilon^\prime\Lambda]}\subset\mathbb R^n\setminus X_{[\varepsilon^\prime\Lambda]}$. Hence we get
\begin{equation}\label{est_g2}
\vert g_2(\zeta,\xi)\vert\le\frac1{\varepsilon^\prime}\chi_2(\zeta)\sigma(\zeta)\gamma(\xi)^{-1}\vert\widehat{a_\phi}(\zeta,\xi)\vert\,,
\end{equation}
and, using also $\sigma\preceq\lambda$,
\begin{equation*}
\Vert g_2(\cdot,\xi)\Vert_{L^p}\le \frac1{\varepsilon^\prime}\Vert\chi_2(\cdot)\sigma(\cdot)\gamma(\xi)^{-1}\vert\widehat{a_\phi}(\cdot,\xi)\Vert_{L^p}\le \frac{C}{\varepsilon^\prime}\Vert a\Vert_{\phi,\lambda,\gamma}\,.
\end{equation*}
This yields that $g_2(\zeta,\xi)\in\mathcal L_1^{p,\infty}(\mathbb R^{2n})$ with norm bounded by
\begin{equation*}
\Vert g_2\Vert_{\mathcal L_1^{p,\infty}}\le\frac{C}{\varepsilon^\prime}\Vert a\Vert_{\phi,\lambda,\gamma}\,.
\end{equation*}
Hence we may apply again Lemma \ref{lemma_cont2} to the third operator in \eqref{decomp1}, and using also the estimates \eqref{est_Fi}, \eqref{stima_cont_2_2} we find
\begin{equation}\label{cont_2}
\left\Vert\int\chi(\cdot)\frac1{\sigma(\cdot-\xi)}g_2(\cdot-\xi,\xi)\tilde v_1(\xi)d\xi\right\Vert_{L^p}\le \frac{C}{\varepsilon^\prime}\Vert 1/\sigma\Vert_{L^q}\Vert a\Vert_{\phi,\lambda,\gamma}\vert u\vert_{X_{[\varepsilon\Lambda\gamma]}}\,.
\end{equation}
Let us consider now the fourth integral operator in \eqref{decomp1}. Applying the same argument used to provide the estimate \eqref{est_g2}, we obtain
\begin{equation}\label{est_tildev2}
\vert \tilde v_2(\zeta,\xi)\vert\le\frac1{\varepsilon^\prime}\chi_2(\xi)\sigma(\xi)^{1/2}\Lambda(\xi)^{1/2}\gamma(\xi)\vert\widehat{u}(\xi)\vert\,.
\end{equation}
Thanks to \eqref{cndt:2}, $\sigma^{1/2}\Lambda^{1/2}\preceq\lambda$, then
\begin{equation}\label{est_int_oprt}
\begin{split}
\left\vert\int \right.&\left.\frac{\chi(\eta)}{\sqrt{\sigma(\xi)\sigma(\eta-\xi)}}\tilde g_2(\eta-\xi,\xi)\tilde v_2(\eta-\xi,\xi)d\xi\right\vert\\
&\le \frac{C}{\varepsilon^\prime}\int\frac{\chi(\eta)}{\sqrt{\sigma(\xi)\sigma(\eta-\xi)}}\vert\tilde g_2(\eta-\xi,\xi)\vert\lambda(\xi)\gamma(\xi)\vert\widehat{u}(\xi)\vert\,d\xi\,.
\end{split}
\end{equation}
On the other hand, using again $\sigma^{1/2}\Lambda^{1/2}\preceq\lambda$ and $a_\phi(\cdot,\xi)/\gamma(\xi)\in\mathcal F L^p_{\lambda}(\mathbb R^n)$, uniformly with respect to $\xi$, we establish that $\tilde{g}_2(\zeta,\xi)$ belongs to $\mathcal L_1^{p,\infty}(\mathbb R^{2n})$ and satisfies the estimate
\begin{equation}\label{est_tildeg2}
\Vert\tilde g_2\Vert_{\mathcal L_1^{p,\infty}}\le\Vert\sigma(\cdot)^{1/2}\Lambda(\cdot)^{1/2}\gamma(\xi)^{-1}\widehat{a_\phi}(\cdot,\xi)\Vert_{L^p}\le C\Vert a\Vert_{\phi,\lambda,\gamma}\,.
\end{equation}
Since $\lambda(\xi)\gamma(\xi)\vert\widehat{u}(\xi)\vert\in L^p(\mathbb R^n)$ (as $u\in\mathcal FL^p_{\lambda\gamma}(\mathbb R^n)$) and $F_3(\eta,\xi)=\frac{\chi(\eta)}{\sqrt{\sigma(\xi)\sigma(\eta-\xi)}}$ belongs to $\mathcal L_2^{\infty,q}(\mathbb R^{2n})$, the integral operator in the right-hand side of \eqref{est_int_oprt} satisfies the assumptions of Lemma \ref{lemma_cont2}, then from \eqref{est_Fi} and \eqref{est_g2} we find
\begin{equation}\label{cont_3}
\begin{split}
\left\Vert\int \right.&\left.\frac{\chi(\cdot)}{\sqrt{\sigma(\xi)\sigma(\cdot-\xi)}}\tilde g_2(\cdot-\xi,\xi)\tilde v_2(\cdot-\xi,\xi)d\xi\right\Vert_{L^p}\\
&\le \frac{C}{\varepsilon^\prime}\Vert 1/\sigma\Vert_{L^q}\Vert a\Vert_{\phi,\lambda,\gamma}\Vert u\Vert_{\mathcal FL^p_{\lambda\gamma}}\,.
\end{split}
\end{equation}
Summing up the estimates \eqref{cont_1}, \eqref{cont_2}, \eqref{cont_3} the $L^p-$norm of $\mathcal I_1u$ in the right-hand side of \eqref{stima_cont_2_1} is estimated by
\begin{equation}\label{cont_4}
\Vert\mathcal I_1u\Vert_{L^p}\le \frac{C}{\varepsilon^\prime}\Vert 1/\sigma\Vert_{L^q}\left(\vert a\vert_{\phi,\Lambda,\gamma,\varepsilon,X}+\Vert a\Vert_{\phi,\lambda,\gamma}\right)\left(\vert u\vert_{X_{[\varepsilon\Lambda\gamma]}}+\Vert u\Vert_{\mathcal F L^p_{\lambda\gamma}}\right)\,.
\end{equation}
The second integral $\mathcal I_2u(\eta)$ in \eqref{stima_cont_2_1} can be handled similarly as before to provide for its $L^p-$norm the same bound as in \eqref{cont_4}. From \eqref{stima_cont_2_1} we then get
\begin{equation}\label{cont_5}
\begin{split}
\Vert\chi&\Lambda\widehat{a_\phi(\cdot,D)u}\Vert_{L^p}\\
&\le \frac{C}{\varepsilon^\prime}\Vert 1/\sigma\Vert_{L^q}\left(\vert a\vert_{\phi,\Lambda,\gamma,\varepsilon,X}+\Vert a\Vert_{\phi,\lambda,\gamma}\right)\left(\vert u\vert_{X_{[\varepsilon\Lambda\gamma]}}+\Vert u\Vert_{\mathcal F L^p_{(\lambda\gamma)}}\right)
\end{split}
\end{equation}
which proves \eqref{reg_2} and shows the continuity of $a(x,D)$ as a linear map from $\mathcal F L^p_{\lambda\gamma}(\mathbb R^n)\cap\mathcal F L^p_{\Lambda\gamma, {\rm mcl}}(X)$ into $\mathcal F L^p_{\lambda,{\rm loc}}(x_0)\cap\mathcal F L^p_{\Lambda, {\rm mcl}}(x_0\times X)$.
\end{proof}
\begin{remark}\label{rmk:14}
{\rm
Let the same hypotheses of Theorem \ref{continuity2} be satisfied.
Clearly every $v=v(x)\in\mathcal FL^p_{\lambda,{\rm loc}}(x_0)\cap\mathcal FL^p_{\Lambda,{\rm mcl}}(x_0\times X)$ is a $\xi-$independent symbol in the class $\mathcal F L^p_{\lambda,\,\Lambda}S_\gamma(x_0\times X)$ corresponding to the weight function $\gamma(\xi)\equiv 1$, and the product of smooth functions by the multiplier $v$ defines a properly supported ``zeroth order'' operator. Therefore we find that the product of any two elements $u, v\in \mathcal FL^p_{\lambda,{\rm loc}}(x_0)\cap\mathcal FL^p_{\Lambda,{\rm mcl}}(x_0\times X)$ still belongs to the same space (giving a continuous bilinear mapping), as a direct application of Theorem \ref{continuity2}. Similarly as in the proof of Corollary \ref{composition_prop}, see also the subsequent Remark \ref{rmk:3.1}, one can deduce that the composition of a vector-valued distribution $u=(u_1,\dots,u_N)\in\left(\mathcal FL^p_{\lambda,{\rm loc}}(x_0)\cap\mathcal FL^p_{\Lambda,{\rm mcl}}(x_0\times X)\right)^N$ with some nonlinear function $F=F(x,\zeta)$ of $x\in\mathbb R^n$ and $\zeta\in\mathbb C^N$, which is locally smooth with respect to $x$ on some neighborhood of $x_0$ and entire analytic with respect to $\zeta$ in the sense of Remark \ref{rmk:3.1}, is again a distribution in $\mathcal FL^p_{\lambda,{\rm loc}}(x_0)\cap\mathcal FL^p_{\Lambda,{\rm mcl}}(x_0\times X)$.

Let us even point out that in the particular case where $\lambda\equiv\Lambda$ the assumption \eqref{cndt:2} in Theorem \ref{continuity2} reduces to $\sigma\preceq\lambda$. In such a case $\mathcal FL^p_{\lambda,{\rm loc}}(x_0)\cap\mathcal FL^p_{\Lambda,{\rm mcl}}(x_0\times X)\equiv \mathcal FL^p_{\lambda,{\rm loc}}(x_0)$ and $\mathcal FL^p_{\lambda,\Lambda}S_\gamma(x_0\times X)\equiv\mathcal FL^p_{\lambda}S_\gamma(V_{x_0})$ for a suitable neighborhood $V_{x_0}$ of $x_0$, see Definition \ref{symbols}, hence the statement of Theorem \ref{continuity2} reduces to a particular case of the statement of Proposition \ref{continuity1} (where $\omega_1=\gamma\lambda$, $\omega=\omega_2=\lambda$) under slightly more restrictive assumptions; indeed a sub-additive weight function $\lambda$ satisfying $\sigma\preceq\lambda$ for $1/\sigma\in L^q(\mathbb R^n)$ also fulfils condition \eqref{B_q} with the same $q$ (that is the assumption required by Proposition \ref{continuity1}), in view of Proposition \ref{wf_relationship}.{\it ii}.
}
\end{remark}
\section{Propagation of singularities}\label{appl_sct}
In this Section, we give some applications to the local and microlocal regularity of semilinear partial(pseudo)differential equations in weighted Fourier Lebesgue spaces. 

The smooth symbols we consider in this Section are related to a suitable subclass of the weight functions introduced in Section \ref{wf_sct}. More precisely, we consider a continuous function $\lambda:\mathbb R^n\rightarrow]0,+\infty[$ satisfying the following:
\begin{equation}\label{p_g_b}
\lambda(\xi)\ge \frac1{C}(1+\vert\xi\vert)^\nu\,,\quad\forall\,\xi\in\mathbb R^n\,;
\end{equation}
\begin{equation}\label{s_v}
\frac1{C}\le\frac{\lambda(\xi)}{\lambda(\eta)}\le C\,,\quad\mbox{as long as}\,\,\vert\xi-\eta\vert\le \frac1{C}\lambda(\eta)^{1/\mu}\,,
\end{equation}
for suitable constants $C\ge 1$, $0<\nu\le\mu$.

Thanks to Proposition \ref{wf_relationship}, it is clear that $\lambda(\xi)$ is a weight function; indeed it also satisfies the temperance condition ($\mathcal T$) for $N=\mu$. 

All the weight functions described in the examples 1--3 given in Section \ref{wf_sct} obey the assumptions \eqref{p_g_b}, \eqref{s_v}.

\smallskip
For $r\in\mathbb R$, $\rho\in]0,1/\mu]$, we define $S^r_{\rho,\lambda}$ as the class of smooth functions $a(x,\xi)\in C^\infty(\mathbb R^{2n})$ whose derivatives decay according to the following estimates
\begin{equation}\label{smooth_symb}
\vert\partial^\alpha_\xi\partial^\beta_x a(x,\xi)\vert\le C_{\alpha,\beta}\lambda(\xi)^{r-\rho\vert\alpha\vert}\,,\qquad\forall\,(x,\xi)\in\mathbb R^{2n}\,.
\end{equation}
If $\Omega$ is an open subset of $\mathbb R^n$, the local class $S^r_{\rho,\lambda}(\Omega)$ is the set of functions $a(x,\xi)\in C^\infty(\Omega\times\mathbb R^n)$ such that $\phi(x)a(x,\xi)\in S^r_{\rho,\lambda}$ for all $\phi\in C^\infty_0(\Omega)$. We will adopt the shortcut
\begin{equation*}
S^r_{\lambda}:=S^r_{1/\mu,\lambda}\,,\qquad S^r_{\lambda}(\Omega):=S^r_{1/\mu,\lambda}(\Omega)\,.
\end{equation*}
A symbol $a(x,\xi)\in S^r_{\lambda}(\Omega)$ (and the related pseudodifferential operator) is said to be $\lambda-${\it elliptic} if for every compact subset $K$ of $\Omega$ some positive constants $c_K$ and $R_K>1$ exist such that
\begin{equation}\label{ell}
\vert a(x,\xi)\vert\ge c_K\lambda(\xi)^r\,,\qquad\forall\,x\in K\,\,\,\mbox{and}\,\,\,\vert\xi\vert\ge R_K\,.
\end{equation}
Let us also observe that
$\bigcap\limits_{r\in\mathbb R}S^r_{\rho,\lambda}(\Omega)=S^{-\infty}(\Omega)$, where in the classic terms $S^{-\infty}(\Omega)$ is the class of symbols $a(x,\xi)\in C^\infty(\Omega\times\mathbb R^n)$ such that for arbitrarily large $\theta>0$, for all multi-indices $\alpha,\beta\in\mathbb Z^n_+$ and every compact set $K\subset\Omega$ there holds
\begin{equation*}
\vert\partial^\alpha_\xi\partial^\beta_x a(x,\xi)\vert\le C_{\alpha,\beta,\theta}(1+\vert\xi\vert)^{-\theta}\,,\quad\forall\,x\in K\,,\,\,\,\forall\,\xi\in\mathbb R^n\,.
\end{equation*}
Pseudodifferential operators with symbols $a(x,\xi)\in S^{-\infty}(\Omega)$ are {\it regularizing} operators in the sense that they define linear bounded operators $a(x,D):\mathcal E^\prime(\Omega)\rightarrow C^\infty(\Omega)$.

The weighted symbol classes $S^r_{\rho,\lambda}(\Omega)$ considered above are a special case of the more general classes $S_{m,\Lambda}(\Omega)$, associated to the weight function $m(\xi)=\lambda(\xi)^r$ and the weight vector $\Lambda(\xi)=(\lambda(\xi)^\rho,\dots,\lambda(\xi)^\rho)$, as defined and studied in \cite[Definition 1.1]{GM3}. For the weighted symbol classes $S^r_{\rho,\lambda}(\Omega)$, a complete symbolic calculus is available, cf. \cite[Sect.1]{GM3}; in particular, the existence of a {\it parametrix} of any elliptic pseudodifferential operator is guaranteed.
\begin{proposition}\label{parametrix}
Let $a(x.\xi)$ be a $\lambda-$elliptic symbol in $S^r_{\rho,\lambda}(\Omega)$. Then a symbol $b(x,\xi)\in S^{-r}_{\rho, \lambda}(\Omega)$ exists such that the operator $b(x,D)$ is properly supported and satisfies
\begin{equation*}
b(x,D)a(x,D)=I+c(x,D)\,,
\end{equation*}
where $I$ denotes the identity operator and $c(x,D)$ is a regularizing pseudodifferential operator.
\end{proposition}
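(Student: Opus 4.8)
The plan is to construct $b$ by the classical elliptic parametrix method, transposed to the weighted calculus of the classes $S^r_{\rho,\lambda}(\Omega)$. First I would produce the principal part of the parametrix. Using the $\lambda$-ellipticity \eqref{ell}, after localizing in $x$ on a compact $K\subset\Omega$ and cutting off the frequency variable near the origin, set
\[
b_0(x,\xi):=\frac{\psi(\xi)}{a(x,\xi)}\,,
\]
where $\psi\in C^\infty(\mathbb R^n)$ vanishes for small $|\xi|$ and equals $1$ for $|\xi|$ large, chosen so that $a$ does not vanish on $\mathrm{supp}\,\psi$ over $K$. The technical heart of this first step is to check that $b_0\in S^{-r}_{\rho,\lambda}(\Omega)$: differentiating the identity $a\cdot(1/a)=1$ by the Leibniz and Fa\`a di Bruno formulas, each $\xi$-derivative of $1/a$ produces exactly one extra factor $\lambda^{-\rho}$ thanks to \eqref{smooth_symb} and the lower bound \eqref{ell}, while $x$-derivatives cost nothing; this yields the required estimates $|\partial_\xi^\alpha\partial_x^\beta b_0|\le C_{\alpha,\beta}\,\lambda^{-r-\rho|\alpha|}$.

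Next I would invoke the composition formula of the symbolic calculus for $S^r_{\rho,\lambda}(\Omega)$, available through \cite[Sect.1]{GM3}: the symbol of $b_0(x,D)a(x,D)$ has the asymptotic expansion $\sum_\alpha \frac1{\alpha!}\partial_\xi^\alpha b_0\,D_x^\alpha a$, whose term of index $\alpha$ lies in $S^{-\rho|\alpha|}_{\rho,\lambda}(\Omega)$, again because $\xi$-derivatives of $b_0$ gain $\lambda^{-\rho}$ while $x$-derivatives of $a$ do not. The leading term equals $b_0\,a=\psi$, which is $1$ modulo a symbol supported in a bounded frequency region, hence modulo $S^{-\infty}(\Omega)$. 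Therefore
\[
b_0(x,D)a(x,D)=I+c_0(x,D)\,,\qquad c_0\in S^{-\rho}_{\rho,\lambda}(\Omega)\,.
\]

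I would then remove the remainder by a Neumann-type iteration. Writing $\#$ for the symbol composition and using the telescoping identity $(-c_0)^{\#j}\#c_0=-(-c_0)^{\#(j+1)}$, the partial sums satisfy
\[
\Bigl(\sum_{j=0}^{N}(-c_0)^{\#j}\Bigr)\#b_0\#a=I-(-c_0)^{\#(N+1)}\,,
\]
and since $(-c_0)^{\#(N+1)}\in S^{-(N+1)\rho}_{\rho,\lambda}(\Omega)$ the error gains a factor $\lambda^{-\rho}$ at each step. An asymptotic summation of $\sum_{j\ge0}(-c_0)^{\#j}$ (Borel's lemma within the weighted classes, part of the same calculus) yields a symbol whose $\#$-product with $b_0$ is the sought $b\in S^{-r}_{\rho,\lambda}(\Omega)$, so that $b(x,D)a(x,D)=I+c(x,D)$ with $c\in S^{-\infty}(\Omega)$. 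Finally, since every pseudodifferential operator agrees, modulo a regularizing operator, with a properly supported one obtained by truncating its Schwartz kernel near the diagonal, I would replace $b(x,D)$ by its properly supported representative; this alters $c$ only by a further regularizing term and preserves the displayed identity.

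The step I expect to be most delicate is the first one, namely proving $1/a\in S^{-r}_{\rho,\lambda}(\Omega)$, since this is where the ellipticity lower bound \eqref{ell} and the anisotropic gain $\lambda^{-\rho}$ per $\xi$-derivative must be combined carefully; once the weighted symbolic calculus of \cite{GM3} is granted, the composition, the iteration, and the asymptotic summation are essentially formal.
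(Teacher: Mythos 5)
Your construction is correct and is essentially the argument the paper relies on: the paper offers no explicit proof of Proposition~\ref{parametrix}, but appeals to the complete symbolic calculus for the classes $S^r_{\rho,\lambda}(\Omega)$ developed in \cite[Sect.~1]{GM3}, of which your scheme (inversion of the symbol via the ellipticity bound \eqref{ell}, composition expansion, Neumann-type iteration with asymptotic summation, and passage to a properly supported representative) is precisely the standard implementation. The only technicalities to keep in mind are the ones you already flag: since $c_K$, $R_K$ in \eqref{ell} depend on the compact $K$, the frequency cut-off must be chosen via an exhaustion of $\Omega$ (so that $b_0$ lands in the local class), and proper supportedness is needed when the symbolic composition is converted into operator composition, not merely at the final step.
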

The following inclusion
\begin{equation}\label{symb_inclusion}
S^r_{\rho,\lambda}(\Omega)\subset\mathcal FL^p_{\omega}S_{\lambda^r}(\Omega)
\end{equation}
holds true, with continuous imbedding, for all $r\in\mathbb R$, $\rho\in]0,1/\mu]$, $p\in[1,+\infty]$ and any weight function $\omega(\xi)$. As a consequence of Proposition \ref{continuity1} we then obtain the following continuity result.
\begin{proposition}\label{continuity3}
Let $\omega(\xi)$ be any weight function and $p\in[1,+\infty]$. Then every pseudodifferential operator with symbol $a(x,\xi)\in S^r_{\rho,\lambda}(\Omega)$ extends to a linear bounded operator
\begin{equation*}
a(x,D):\mathcal FL^p_{\lambda^r\omega}(\mathbb R^n)\rightarrow\mathcal FL^p_{\omega,{\rm loc}}(\Omega)\,.
\end{equation*}
If in addition $a(x,D)$ is properly supported, then the latter extends to a linear bounded operator
\begin{equation*}
a(x,D):\mathcal FL^p_{\lambda^r\omega,{\rm loc}}(\Omega)\rightarrow\mathcal FL^p_{\omega,{\rm loc}}(\Omega)\,.
\end{equation*}
\end{proposition}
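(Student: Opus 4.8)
The plan is to derive the statement as a direct corollary of Proposition~\ref{continuity1}, using the symbol inclusion \eqref{symb_inclusion} and exploiting the fact that the latter holds for an \emph{arbitrary} weight function. The idea is to choose a convenient auxiliary weight $\omega_0$ to play the role of the symbol-regularity weight in Proposition~\ref{continuity1}, and then to match the remaining weights so that the $\lambda^r$-factors cancel and the integrability hypothesis \eqref{B_q1} reduces to a statement about $\omega$ alone, settled by temperance.

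Concretely, I would apply Proposition~\ref{continuity1} with the identifications $\gamma=\lambda^r$, $\omega_1=\lambda^r\omega$, $\omega_2=\omega$, and with its symbol-regularity weight (the ``$\omega$'' of Proposition~\ref{continuity1}) taken to be a weight $\omega_0$ to be fixed below; these are legitimate weight functions by Proposition~\ref{pm_prop1}. With these choices the kernel in \eqref{B_q1} simplifies, for all $\xi,\eta\in\mathbb R^n$, to
\begin{equation*}
\frac{\omega_2(\xi)\gamma(\eta)}{\omega_1(\eta)\omega_0(\xi-\eta)}=\frac{\omega(\xi)\lambda(\eta)^r}{\lambda(\eta)^r\omega(\eta)\omega_0(\xi-\eta)}=\frac{\omega(\xi)}{\omega(\eta)\omega_0(\xi-\eta)}\,,
\end{equation*}
so that only $\omega$ and the auxiliary weight $\omega_0$ survive.

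The key step is to choose $\omega_0$ so that \eqref{B_q1} holds for \emph{every} prescribed $\omega$. Let $N$ be a temperance exponent of $\omega$, so that $\omega(\xi)\le C\langle\xi-\eta\rangle^N\omega(\eta)$ for all $\xi,\eta$ (cf. \eqref{pg_cond}), and set $\omega_0(\zeta):=\langle\zeta\rangle^{N+M}$, which is a weight function by Peetre's inequality \eqref{peetre}. Then
\begin{equation*}
\frac{\omega(\xi)}{\omega(\eta)\omega_0(\xi-\eta)}\le C\,\frac{\langle\xi-\eta\rangle^N}{\langle\xi-\eta\rangle^{N+M}}=C\,\langle\xi-\eta\rangle^{-M}\,,
\end{equation*}
whence $\left\Vert\omega(\xi)/\left(\omega(\cdot)\omega_0(\xi-\cdot)\right)\right\Vert_{L^q}\le C\Vert\langle\cdot\rangle^{-M}\Vert_{L^q}$, which is finite and independent of $\xi$ as soon as $Mq>n$ (when $q<+\infty$), respectively $M\ge 0$ (when $q=+\infty$). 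Fixing such an $M$ verifies \eqref{B_q1}.

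Finally, \eqref{symb_inclusion} applied with the weight $\omega_0$ gives $a(x,\xi)\in\mathcal FL^p_{\omega_0}S_{\lambda^r}(\Omega)$, so Proposition~\ref{continuity1}.{\it ii} applies with the above data and yields the bounded extension $a(x,D):\mathcal FL^p_{\lambda^r\omega}(\mathbb R^n)\to\mathcal FL^p_{\omega,{\rm loc}}(\Omega)$, while its properly supported counterpart follows from the second assertion of Proposition~\ref{continuity1}.{\it ii}. I do not expect a genuine obstacle here: the argument rests entirely on the cancellation of the $\lambda^r$-factors and on absorbing the polynomial growth of $\omega$ (furnished by temperance) into the freely chosen auxiliary weight $\omega_0$; the only point requiring a little care is the integrability $\langle\cdot\rangle^{-M}\in L^q(\mathbb R^n)$, i.e. the choice $Mq>n$.
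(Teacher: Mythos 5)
Your proposal is correct and follows essentially the same route as the paper: both apply Proposition \ref{continuity1} with $\gamma=\lambda^r$, $\omega_1=\lambda^r\omega$, $\omega_2=\omega$ and a freely chosen auxiliary symbol weight, verified via the inclusion \eqref{symb_inclusion}. The paper simply takes $\tilde\omega(\xi)=(1+\vert\xi\vert)^{\tilde N}$ with $\tilde N$ "sufficiently large" without elaboration, whereas you spell out the choice $\tilde N=N+M$ (temperance exponent plus $Mq>n$) and the cancellation of the $\lambda^r$ factors explicitly.
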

\begin{proof}
In view of \eqref{symb_inclusion}, it is enough to observe that for any weight function $\omega(\xi)$, another weight function $\tilde{\omega}(\xi)$ can be found in such a way that
\begin{equation}\label{B_q2}
\sup\limits_{\xi\in\mathbb R^n}\left\Vert\frac{\omega(\xi)}{\omega(\cdot)\tilde{\omega}(\xi-\cdot)}\right\Vert_{L^q}<+\infty\,,
\end{equation}
where $q\in[1,+\infty]$ is the conjugate exponent of $p$; for instance, one can take $\tilde\omega(\xi)=(1+\vert\xi\vert)^{\tilde N}$, with $\tilde N>0$ sufficiently large. Then the result follows at once, by noticing that $a(x,\xi)$ belongs to $\mathcal FL^p_{\tilde\omega}S_{\lambda^r}(\Omega)$ and \eqref{B_q2} is nothing but condition \eqref{B_q1}, where $\gamma$, $\omega_1$, $\omega_2$ and $\omega$ in Proposition \ref{continuity1} are replaced respectively by $\lambda^r$, $\lambda^r\omega$, $\omega$ and $\tilde\omega$.
\end{proof}
\subsection{Local regularity results}\label{loc_reg_sct}
Let $\lambda=\lambda(\xi)$ be a given continuous weigh function satisfying the assumptions \eqref{p_g_b} and \eqref{s_v}. We consider a nonlinear pseudodifferential equation of the following type
\begin{equation}\label{semilin_eqt}
a(x,D)u+ F(x,b_i(x,D)u)_{1\le i\le M}=f(x)\,,
\end{equation}
where $u=u(x)$ is defined on some open set $\Omega\subseteq\mathbb R^n$ and $a(x,D)$ is a properly supported pseudodifferential operator with symbol $a(x,\xi)\in S^r_{\lambda}(\Omega)$ for given $r>0$. $F(x,b^i(x,D)u)_{1\le i\le M}$ stands for a nonlinear function of $x\in\Omega$ and $b
_1(x,D)u$, $b_2(x,D)u$,..., $b_M(x,D)u$ where $b_i(x,D)$ are still properly supported pseudodifferential operators, and $f=f(x)$ is a given forcing term. We require the equation \eqref{semilin_eqt} to be {\it semilinear} by assuming that the operators involved in the nonlinear part $F(x,b^i(x,D)u)$ have order strictly smaller than the order of the linear part $a(x,D)u$, that is
\begin{equation}\label{lower_order}
b^i(x,\xi)\in S^{r-\varepsilon}_{\lambda}(\Omega)\qquad\mbox{for}\,\, i=1,\dots,M\,,
\end{equation}
for suitable $0<\varepsilon<r$.

For $s\in\mathbb R$, $p\in[1,+\infty]$, let us set
\begin{equation*}
\mathcal FL^p_{s,\lambda}(\mathbb R^n):=\mathcal FL^p_{\lambda^s}(\mathbb R^n)\,,\quad\mathcal FL^p_{s,\lambda,{\rm loc}}(\Omega):=\mathcal FL^p_{\lambda^s,{\rm loc}}(\Omega)\,.
\end{equation*}
The following regularity result can be proved.
\begin{proposition}\label{reg_prop1}
Let the symbol $a(x,\xi)\in S^r_{\lambda}(\Omega)$ be $\lambda-$elliptic and the function $F=F(x,\zeta)$ obey the assumptions collected in Remark \ref{rmk:3.1}. For a given $p\in[1,+\infty]$, take a real number $t$ such that $\lambda^{t-r+\varepsilon}$ fulfils condition \eqref{B_q} with $q$ the conjugate exponent of $p$. If $u\in\mathcal FL^p_{t,\lambda,{\rm loc}}(\Omega)$ is any solution of the equation \eqref{semilin_eqt}, with forcing term $f\in\mathcal F L^p_{s-r,\lambda,{\rm loc}}(\Omega)$ for some $s>t$, then $u\in \mathcal FL^p_{s,\lambda,{\rm loc}}(\Omega)$. 

If in particular $u\in\mathcal FL^p_{t,\lambda,{\rm loc}}(\Omega)$ solves the equation \eqref{semilin_eqt} with $f=0$ (that is the equation \eqref{semilin_eqt} is homogeneous) then $u\in C^\infty(\Omega)$.
\end{proposition}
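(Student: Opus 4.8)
The plan is to invert the $\lambda$-elliptic principal part by the parametrix of Proposition \ref{parametrix} and then to bootstrap the Fourier--Lebesgue regularity of $u$, gaining $\varepsilon$ orders at each iteration; the smoothness in the homogeneous case then follows by letting $s\to+\infty$ and identifying $\bigcap_{s\in\mathbb R}\mathcal FL^p_{s,\lambda,{\rm loc}}(\Omega)$ with $C^\infty(\Omega)$.

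First I would set up the parametrix identity. Since $a(x,\xi)\in S^r_\lambda(\Omega)$ is $\lambda$-elliptic, Proposition \ref{parametrix} gives a properly supported $b(x,D)$ with symbol in $S^{-r}_\lambda(\Omega)$ and a regularizing operator $c(x,D)$ — properly supported, being the difference $b(x,D)a(x,D)-I$ of properly supported operators — with $b(x,D)a(x,D)=I+c(x,D)$. Applying $b(x,D)$ to \eqref{semilin_eqt} and rearranging yields
\begin{equation*}
u=b(x,D)f-b(x,D)\,F(x,b_i(x,D)u)_{1\le i\le M}-c(x,D)u,
\end{equation*}
where $c(x,D)u\in C^\infty(\Omega)$ already, hence lies in $\mathcal FL^p_{s,\lambda,{\rm loc}}(\Omega)$ for every $s$.

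Next I would read off the regularity of the two genuine terms assuming only $u\in\mathcal FL^p_{\tau,\lambda,{\rm loc}}(\Omega)$ for some $\tau\ge t$. By Proposition \ref{continuity3}, the properly supported $b(x,D)$ of order $-r$ sends $f\in\mathcal FL^p_{s-r,\lambda,{\rm loc}}(\Omega)$ into $\mathcal FL^p_{s,\lambda,{\rm loc}}(\Omega)$. For the nonlinear term, each $b_i(x,D)\in S^{r-\varepsilon}_\lambda(\Omega)$ carries $\mathcal FL^p_{\tau,\lambda,{\rm loc}}(\Omega)$ into $\mathcal FL^p_{\tau-r+\varepsilon,\lambda,{\rm loc}}(\Omega)$; provided this last space is an algebra, the composition result of Remark \ref{rmk:3.1} gives $F(x,b_i(x,D)u)_{1\le i\le M}\in\mathcal FL^p_{\tau-r+\varepsilon,\lambda,{\rm loc}}(\Omega)$, and a further application of $b(x,D)$ raises the order by $r$, landing this term in $\mathcal FL^p_{\tau+\varepsilon,\lambda,{\rm loc}}(\Omega)$. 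Inserting back into the representation gives $u\in\mathcal FL^p_{\min(\tau+\varepsilon,\,s),\lambda,{\rm loc}}(\Omega)$; starting from $\tau_0=t$ and setting $\tau_{k+1}=\min(\tau_k+\varepsilon,\,s)$, one reaches $\tau_k=s$ after finitely many steps, which proves the first assertion.

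The point I expect to be the main obstacle is justifying that each step of the bootstrap is legitimate, i.e. that $\mathcal FL^p_{\tau-r+\varepsilon,\lambda,{\rm loc}}(\Omega)$ is an algebra for all $\tau\in[t,s]$, equivalently that condition \eqref{B_q} assumed for $\lambda^{t-r+\varepsilon}$ persists when the exponent is raised. Here I would invoke Proposition \ref{algebra_prop4}: the hypothesis that $\lambda^{t-r+\varepsilon}$ satisfies \eqref{B_q} (for $q<+\infty$ this forces $t-r+\varepsilon>0$) implies that $\lambda^{t-r+\varepsilon}$ obeys ($\mathcal{SM}$) — immediate for $q=+\infty$ by Remark \ref{rmk:4} — and taking roots yields ($\mathcal{SM}$) for $\lambda$ itself. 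Submultiplicativity bounds the ratio $\lambda(\xi)/\bigl(\lambda(\xi-\eta)\lambda(\eta)\bigr)$ uniformly, so for any $a\ge t-r+\varepsilon$ the integral $\int\bigl(\lambda(\xi)/(\lambda(\xi-\eta)\lambda(\eta))\bigr)^{aq}\,d\eta$ is dominated by its value at $a=t-r+\varepsilon$ times a bounded factor, and \eqref{B_q} holds at every level of the iteration. Finally, for $f=0$ one has $0\in\mathcal FL^p_{s-r,\lambda,{\rm loc}}(\Omega)$ for all $s$, so the first part gives $u\in\mathcal FL^p_{s,\lambda,{\rm loc}}(\Omega)$ for every $s$; using \eqref{p_g_b} in the form $\lambda(\xi)^s\ge C^{-s}(1+|\xi|)^{\nu s}$, a Hölder splitting of $\xi^\alpha\widehat{\varphi u}$ against a factor $(1+|\xi|)^{-M}\in L^q(\mathbb R^n)$ (with $Mq>n$) shows $\xi^\alpha\widehat{\varphi u}\in L^1(\mathbb R^n)$ for every multi-index $\alpha$ and every $\varphi\in C^\infty_0(\Omega)$, whence $D^\alpha(\varphi u)\in C^0$ and therefore $u\in C^\infty(\Omega)$.
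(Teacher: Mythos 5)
Your proof is correct and follows essentially the same route as the paper: a bootstrap that gains $\varepsilon$ orders per step by combining the continuity of the $b_i(x,D)$ (Proposition \ref{continuity3}), the composition property of Remark \ref{rmk:3.1} under condition \eqref{B_q}, and $\lambda$-elliptic regularity, iterated until the level $s$ is reached, with the homogeneous case obtained by letting $s\to+\infty$. The only differences are ones of explicitness, all to your credit: you write out the parametrix identity $u=b(x,D)f-b(x,D)F(\cdot)-c(x,D)u$ where the paper merely says ``because of the $\lambda$-ellipticity,'' you justify the persistence of \eqref{B_q} at higher exponents via $(\mathcal{SM})$ where the paper relegates this to a footnote citing Proposition \ref{algebra_prop4}, and you prove the inclusion $\bigcap_{s}\mathcal FL^p_{s,\lambda,{\rm loc}}(\Omega)\subset C^\infty(\Omega)$ which the paper asserts without argument.
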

\begin{proof}
Because of Proposition \ref{continuity3} and the assumption \eqref{lower_order}, $b_i(x,D)u\in\mathcal FL^p_{t-r+\varepsilon, \lambda,{\rm loc}}(\Omega)$ for all $i=1,\dots,M$. Since $\lambda^{t-r+\varepsilon}$ satisfies \eqref{B_q}, Corollary \ref{composition_prop} also implies $F(x,b_i(x,D)u)\in\mathcal FL^p_{t-r+\varepsilon,\lambda,{\rm loc}}(\Omega)$ (cf. Remark \ref{rmk:3.1}).

If $t+\varepsilon\ge s$ then $a(x,D)u=-F(x,b_i(x,D)u)+f\in \mathcal F L^p_{s-r,\lambda,{\rm loc}}(\Omega)$ hence $u\in \mathcal F L^p_{s,\lambda,{\rm loc}}(\Omega)$ because of the $\lambda-$ellipticity of $a(x,D)$.

If on the contrary $t+\varepsilon<s$, applying again the $\lambda-$ellipticity of $a(x,D)$, from $a(x,D)u=-F(x,b_i(x,D)u)+f\in \mathcal F L^p_{t-r+\varepsilon,\lambda,{\rm loc}}(\Omega)$ we derive $u\in \mathcal F L^p_{t+\varepsilon,\lambda,{\rm loc}}(\Omega)$. In the latter case, we may repeat the same arguments above, where now $t$ is replaced by $t+\varepsilon$\,\footnote{Let us notice in particular that if the weight function $\lambda^{t-r+\varepsilon}$ satisfies condition \eqref{B_q}, then the same is true for any power of $\lambda$ with exponent greater than $t-r+\varepsilon$, in view of Proposition \ref{algebra_prop4}.}. After that we get $F(x,b_i(x,D)u)\in\mathcal FL^p_{t-r+2\varepsilon,\lambda,{\rm loc}}(\Omega)$ and, provided that $t+2\varepsilon<s$, $u\in \mathcal F L^p_{t+2\varepsilon,\lambda,{\rm loc}}(\Omega)$. It is now clear that the second part of the argument above can be iterated $N$ times, up to get $F(x,b_i(x,D)u)\in\mathcal FL^p_{t-r+N\varepsilon,\lambda,{\rm loc}}(\Omega)$ with $t+N\varepsilon\ge s$; hence $a(x,D)u=-F(x,b_i(x,D)u)+f\in \mathcal F L^p_{s-r,\lambda,{\rm loc}}(\Omega)$ implies $u\in \mathcal F L^p_{s,\lambda,{\rm loc}}(\Omega)$ from the $\lambda-$ellipticity of $a(x,D)$.

The second part of the theorem, concerning the case $f=0$, follows at once from the first one; in this case the argument above can be applied for arbitrarily large $s$, thus $u\in\bigcap\limits_{s\ge t}\mathcal FL^p_{s,\lambda,{\rm loc}}(\Omega)\subset C^\infty(\Omega)$.
\end{proof}
\begin{remark}\label{rmk:100}
{\rm Let us suppose that the weight function $\lambda=\lambda(\xi)$ fulfils condition ($\mathcal{SA}$) (respectively condition ($\mathcal G$)), besides \eqref{p_g_b} and \eqref{s_v}. Then $\lambda^{t-r+\varepsilon}$ satisfies condition \eqref{B_q} if $t>r+\frac{n}{\nu q}-\varepsilon$ (respectively $t>r+\frac{n}{(1-\delta)\nu q}-\varepsilon$) is assumed.}
\end{remark}
\subsection{Microlocal regularity results}\label{mcl_reg_sct}
The results presented in this section apply to a class of weight functions which is smaller than the one considered in Section \ref{loc_reg_sct}. More precisely here we deal with a continuous function $\lambda:\mathbb R^n\rightarrow]0,+\infty[$ which satisfies ($\mathcal{SA}$), ($\mathcal{SH}$) and obeys the following
\begin{itemize}
\item[($\mathcal{PG}$)]{\bf polynomial growth conditions}: for suitable constants $C\ge 1$, $0<\nu\le\mu$.
\begin{equation}\label{p_g}
\frac1{C}(1+\vert\xi\vert)^{\nu}\le\lambda(\xi)\le C(1+\vert\xi\vert)^{\mu}\,,\quad\forall\,\xi\in\mathbb R^n\,.
\end{equation}
\end{itemize}

\begin{remark}\label{rmk:15}
{\rm It is known from the previous section that such a function $\lambda$ also satisfies condition \eqref{strong_sv}. Then it can be shown that \eqref{strong_sv}, together with \eqref{p_g}, also implies that $\lambda$ obeys the slowly varying condition \eqref{s_v}\footnote{More precisely, from Section \ref{loc_reg_sct} we know that conditions \eqref{s_v} and the second inequality in \eqref{p_g} are equivalent under the assumptions \eqref{strong_sv} and \eqref{p_g_b}}. Thus the class of weight functions considered in this Section is a proper subclass of that considered in Section \ref{loc_reg_sct}. It is worthy to be noticed that weight functions described in the examples 1, 2, given in Section \ref{wf_sct}, are included in the class of weight functions that we are considering here, whereas the multi-quasi-elliptic weight function illustrated in the example 3 does not meet all the assumptions required here, precisely the sub-additivity ($\mathcal{SA}$) is not satisfied unless the complete polyhedron $\mathcal P$ gives rise to a quasi-homogeneous weight function of type \eqref{quasi_ell_wf}. Additional examples of weight functions obeying conditions ($\mathcal{SA}$), ($\mathcal{SH}$) and ($\mathcal{PG}$) are provided by the following
\begin{equation*}
\lambda_{r,s}(\xi)=\langle\xi\rangle^s\left[\log(2+\langle\xi\rangle)\right]^r\,,\quad\mbox{for}\,\, r,s\in ]0,+\infty[\,,
\end{equation*}
which were studied by Triebel \cite{TRI_3} (see also \cite{GM2}), or even by such functions as
\begin{equation*}
\begin{split}
\langle\xi\rangle^2_{\mu,\nu}&=1+\sum\limits_{j=1}^n\vert\xi_j\vert^{\mu_j}\left[\log(2+\vert\xi_j\vert)\right]^{\nu_j}\,,\\
&\quad\mbox{for}\,\,\mu=(\mu_1,\dots,\mu_n), \nu=(\nu_1,\dots,\nu_n)\in ]0,+\infty[^n\,,
\end{split}
\end{equation*}
or
\begin{equation*}
\Lambda_{s,\mathcal P}(\xi)=\langle\xi\rangle^s+\log(\lambda_{\mathcal P}(\xi))\,,\quad\mbox{for}\,\, s\in]0,+\infty[\,,
\end{equation*}
being $\lambda_{\mathcal P}(\xi)$ the multi-quasi-elliptic weight associated to a complete polyhedron $\mathcal P$, as it was introduced in Example 3 of Section \ref{wf_sct} (see \eqref{mqe_wf}).
}
\end{remark}

\smallskip
In order to take advantage of the slowly varying condition \eqref{s_v} (which allows in particular the symbolic calculus for smooth classes $S^r_{\rho, \lambda}(\Omega)$, see Section \ref{appl_sct}), it is convenient to introduce here another family of neighborhoods of an arbitrary set $X$ (in the frequency space $\mathbb R^n_{\xi}$), associated to the weight function $\lambda$, besides the $[\lambda]-$neighborhoods $X_{[\varepsilon\lambda]}$ already defined as in \eqref{omega_neighb}. For arbitrary $X\subset\mathbb R^n$ and $\varepsilon>0$ we set
\begin{equation}\label{omega_neighb_eucl}
X_{\varepsilon\lambda}:=\bigcup\limits_{\xi_0\in X}\left\{\xi\in\mathbb R^n\,:\,\,\vert\xi-\xi_0\vert<\varepsilon\lambda(\xi_0)^{1/\mu}\right\}\,,
\end{equation}
where $\mu>0$ is the same exponent involved in \eqref{p_g} (hence in \eqref{s_v} according to Remark \ref{rmk:15}); we will refer to the set $X_{\varepsilon\lambda}$ as the $\lambda-${\it neighborhood} of $X$ of size $\varepsilon$.

In the following for an open set $\Omega\subset\mathbb R^n$ and $x_0\in\Omega$, we also set for short $X_{\varepsilon\lambda}(x_0):=B_\varepsilon(x_0)\times X_{\varepsilon\lambda}$, where $B_\varepsilon(x_0)$ denotes the open ball in $\Omega$ centered at $x_0$ with radius $\varepsilon$.

\medskip
Compared to the case of $[\lambda]-$neighborhoods of a set $X$, to define the corresponding $\lambda-$neighborhoods the weight function $\lambda$ is replaced by the Euclidean norm, as the measure of the distance from points in $X_{\varepsilon\lambda}$ to points in $X$. This reflects into a slightly different behaviour of $\lambda-$neighborhoods: it is clear (just from the definition) that for $\varepsilon>0$ arbitrarily small the set $X_{\varepsilon\lambda}$ is never empty (unless $X=\emptyset$), cf. Remark \ref{rmk:intorni}; it is also clear that $X_{\varepsilon\lambda}$ is open, for it is the union of a family of open balls in $\mathbb R^n$ (centered at points of $X$).

The same set inclusions as given in Lemma \ref{mcl_lemma1} remain true also when the $[\lambda]-$neighborhoods of a set are replaced by the $\lambda-$neighborhoods, see \cite{RO1}, \cite{GM2.1} for the proof.
\begin{lemma}\label{mcl_lemma3}
Given $\varepsilon>0$, there exists $0<\varepsilon^\prime<\varepsilon$ such that for every $X\subset\mathbb R^n$
\begin{itemize}
\item[(1)] $\left(X_{\varepsilon^\prime\lambda}\right)_{\varepsilon^\prime\lambda}\subset X_{\varepsilon\lambda}$;
\item[(2)] $\left(\mathbb R^n\setminus X_{\varepsilon\lambda}\right)_{\varepsilon^\prime\lambda}\subset\mathbb R^n\setminus X_{\varepsilon^\prime\lambda}$.
\end{itemize}
\end{lemma}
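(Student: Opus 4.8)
The plan is to mimic, in the Euclidean metric, the argument already used for Lemma \ref{mcl_lemma1}, replacing the role the weight plays in measuring distances by the power $\lambda(\cdot)^{1/\mu}$ and invoking the slowly varying condition \eqref{s_v} to compare the values of $\lambda$ at nearby frequencies. Let $C\ge 1$ and $\mu>0$ be the constants appearing in \eqref{s_v}. I would show that the single choice
\begin{equation*}
\varepsilon^\prime:=\min\left\{\frac1{C}\,,\,\frac{\varepsilon}{C^{2/\mu}+1}\right\}
\end{equation*}
works for both inclusions and for every $X\subset\mathbb R^n$ simultaneously; note that $\varepsilon^\prime<\varepsilon$ since $C^{2/\mu}+1\ge 2$, and that $\varepsilon^\prime$ depends only on $\lambda$ (through $C,\mu$) and not on $X$, which is what the statement demands.

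For inclusion (1) I would take $\xi\in\left(X_{\varepsilon^\prime\lambda}\right)_{\varepsilon^\prime\lambda}$, so that there exist $\eta\in X_{\varepsilon^\prime\lambda}$ and $\xi_0\in X$ with $|\xi-\eta|<\varepsilon^\prime\lambda(\eta)^{1/\mu}$ and $|\eta-\xi_0|<\varepsilon^\prime\lambda(\xi_0)^{1/\mu}$. Since $\varepsilon^\prime\le 1/C$, the second inequality lets me apply \eqref{s_v} at $\xi_0$, giving $\lambda(\eta)^{1/\mu}\le C^{1/\mu}\lambda(\xi_0)^{1/\mu}$. Feeding this into the triangle inequality $|\xi-\xi_0|\le|\xi-\eta|+|\eta-\xi_0|$ yields $|\xi-\xi_0|<\varepsilon^\prime(C^{1/\mu}+1)\lambda(\xi_0)^{1/\mu}\le\varepsilon\lambda(\xi_0)^{1/\mu}$ by the choice of $\varepsilon^\prime$ (using $C^{1/\mu}\le C^{2/\mu}$), whence $\xi\in X_{\varepsilon\lambda}$.

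For inclusion (2) I would argue by contradiction: assume $\xi\in\left(\mathbb R^n\setminus X_{\varepsilon\lambda}\right)_{\varepsilon^\prime\lambda}$ but $\xi\in X_{\varepsilon^\prime\lambda}$. Then there are $\eta\notin X_{\varepsilon\lambda}$ and $\xi_0\in X$ with $|\xi-\eta|<\varepsilon^\prime\lambda(\eta)^{1/\mu}$ and $|\xi-\xi_0|<\varepsilon^\prime\lambda(\xi_0)^{1/\mu}$. Because $\varepsilon^\prime\le 1/C$, two applications of \eqref{s_v}, chained through the intermediate frequency $\xi$, give $\lambda(\eta)^{1/\mu}\le C^{1/\mu}\lambda(\xi)^{1/\mu}\le C^{2/\mu}\lambda(\xi_0)^{1/\mu}$. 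The triangle inequality $|\eta-\xi_0|\le|\eta-\xi|+|\xi-\xi_0|$ then produces $|\eta-\xi_0|<\varepsilon^\prime(C^{2/\mu}+1)\lambda(\xi_0)^{1/\mu}\le\varepsilon\lambda(\xi_0)^{1/\mu}$, i.e. $\eta\in X_{\varepsilon\lambda}$, contradicting the choice of $\eta$; hence $\xi\notin X_{\varepsilon^\prime\lambda}$.

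The computations are elementary; the only points requiring care are the bookkeeping of the slowly varying constants and the verification that \eqref{s_v} may legitimately be applied, which is exactly why the restriction $\varepsilon^\prime\le 1/C$ is imposed. I expect the main (mild) subtlety to be the observation that chaining \eqref{s_v} through the auxiliary point $\xi$ in part (2) costs a factor $C^{2/\mu}$ rather than the $C^{1/\mu}$ appearing in part (1); this is why the denominator $C^{2/\mu}+1$ is used in the definition of $\varepsilon^\prime$, so that a single threshold serves both inclusions.
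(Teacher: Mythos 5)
Your proof is correct. Note that the paper does not actually prove this lemma: it only remarks that the inclusions of Lemma \ref{mcl_lemma1} carry over to $\lambda-$neighborhoods and defers the details to \cite{RO1} and \cite{GM2.1}, so your argument supplies exactly what the paper outsources to the references. Your route is the natural adaptation: since $\lambda-$neighborhoods are defined through the Euclidean distance (rather than through the weight itself, as in the $[\omega]-$case), the sub-additivity ($\mathcal{SA}$) that underlies Lemma \ref{mcl_lemma1} can be replaced by the plain triangle inequality, and all comparisons of values of $\lambda$ at nearby frequencies are handled by the slowly varying condition \eqref{s_v}, which is indeed available in this section by Remark \ref{rmk:15}. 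The bookkeeping is sound: in (1) a single application of \eqref{s_v} at the base point $\xi_0$ costs $C^{1/\mu}$, while in (2) chaining through the intermediate point $\xi$ costs $C^{2/\mu}$; your uniform threshold $\varepsilon^\prime=\min\left\{1/C,\,\varepsilon/(C^{2/\mu}+1)\right\}$ is positive, smaller than $\varepsilon$, independent of $X$, legitimizes each use of \eqref{s_v} (the hypothesis $\vert\cdot\vert\le\frac1{C}\lambda(\cdot)^{1/\mu}$ is met because $\varepsilon^\prime\le 1/C$), and the strict inequality needed for membership in $X_{\varepsilon\lambda}$ is preserved throughout. What your version buys, compared with the paper's citation, is a short self-contained proof that makes explicit the $X-$independence of $\varepsilon^\prime$, which is precisely the uniformity the statement requires.
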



A significant relation between $[\lambda]-$ and $\lambda-$neighborhoods is established by the next two results.
\begin{lemma}\label{lemma_inclusione_intorni}
Let $c>0$ be arbitrarily fixed. For every $\varepsilon>0$ there exists $0<\varepsilon^\prime<\varepsilon$ such that the set inclusion
\begin{equation}\label{inclusione_intorni}
\left(X\cap\left\{\lambda(\xi)>c/\varepsilon^\prime\right\}\right)_{\varepsilon^\prime\lambda}\subset X_{[\varepsilon\lambda]}\cap\left\{\lambda(\xi)>c/\varepsilon\right\}
\end{equation}
holds true for every $X\subset\mathbb R^n$.
\end{lemma}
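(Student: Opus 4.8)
The plan is to unravel both neighborhood definitions and reduce the inclusion to two pointwise estimates on a single point. Fix $\xi$ in the left-hand side $\left(X\cap\{\lambda(\xi)>c/\varepsilon'\}\right)_{\varepsilon'\lambda}$; by \eqref{omega_neighb_eucl} there is a point $\xi_0\in X$ with $\lambda(\xi_0)>c/\varepsilon'$ and $|\xi-\xi_0|<\varepsilon'\lambda(\xi_0)^{1/\mu}$. The goal is to select $\varepsilon'$, depending only on $\varepsilon$, $c$ and the structural constants of $\lambda$, so small that $\xi$ lands both in $\{\lambda>c/\varepsilon\}$ and in $X_{[\varepsilon\lambda]}$.

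For the frequency lower bound I would invoke the slowly varying condition \eqref{s_v}: as soon as $\varepsilon'\le 1/C$ with $C$ the constant in \eqref{s_v}, the inequality $|\xi-\xi_0|<\varepsilon'\lambda(\xi_0)^{1/\mu}\le\frac1C\lambda(\xi_0)^{1/\mu}$ forces $\lambda(\xi)\ge\frac1C\lambda(\xi_0)>\frac{c}{C\varepsilon'}$, and imposing further $\varepsilon'\le\varepsilon/C$ yields $\lambda(\xi)>c/\varepsilon$. This establishes $\xi\in\{\lambda>c/\varepsilon\}$.

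The membership $\xi\in X_{[\varepsilon\lambda]}$, that is $\lambda(\xi-\xi_0)<\varepsilon\lambda(\xi_0)$ by \eqref{omega_neighb}, is the more delicate point, and it is here that the truncation $\{\lambda>c/\varepsilon'\}$ enters. Writing $\zeta=\xi-\xi_0$ and applying the upper polynomial growth bound in \eqref{p_g} one gets $\lambda(\zeta)\le C(1+|\zeta|)^\mu<C\bigl(1+\varepsilon'\lambda(\xi_0)^{1/\mu}\bigr)^\mu$. The summand $1$ must now be absorbed into $\lambda(\xi_0)^{1/\mu}$: since $\lambda(\xi_0)>c/\varepsilon'$ gives $\lambda(\xi_0)^{-1/\mu}<(\varepsilon'/c)^{1/\mu}$, factoring out $\lambda(\xi_0)$ produces $\lambda(\zeta)<C\bigl((\varepsilon'/c)^{1/\mu}+\varepsilon'\bigr)^\mu\lambda(\xi_0)$. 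The coefficient $C\bigl((\varepsilon'/c)^{1/\mu}+\varepsilon'\bigr)^\mu$ tends to $0$ as $\varepsilon'\to 0^+$, so it can be forced below $\varepsilon$, delivering $\lambda(\xi-\xi_0)<\varepsilon\lambda(\xi_0)$ and hence $\xi\in X_{[\varepsilon\lambda]}$.

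Finally I would collect the smallness requirements on $\varepsilon'$, namely $\varepsilon'\le 1/C$, $\varepsilon'\le\varepsilon/C$, $C((\varepsilon'/c)^{1/\mu}+\varepsilon')^\mu<\varepsilon$ and $\varepsilon'<\varepsilon$; all hold simultaneously once $\varepsilon'$ is small enough, and none depends on $X$, so the chosen $\varepsilon'$ is uniform in $X$ as claimed. The main obstacle is precisely the absorption of the additive constant $1$ in the polynomial bound: without the restriction to frequencies with $\lambda(\xi_0)>c/\varepsilon'$ that constant would survive and prevent the comparison coefficient from vanishing as $\varepsilon'\to 0$, so the two notions of neighborhood would fail to be comparable.
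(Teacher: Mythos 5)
Your proof is correct. For the core inclusion $\xi\in X_{[\varepsilon\lambda]}$ you use exactly the paper's mechanism: bound $\lambda(\xi-\xi_0)$ by the upper estimate in ($\mathcal{PG}$) and let the truncation $\lambda(\xi_0)>c/\varepsilon^\prime$ absorb the additive constant $1$. The paper implements this via $(1+t)^\mu\le 2^{\mu-1}(1+t^\mu)$ together with $1<(\varepsilon^\prime/c)\lambda(\xi_0)$, ending with the coefficient $C2^{\mu-1}\varepsilon^\prime(1/c+1)$, while you factor out $\lambda(\xi_0)^{1/\mu}$ and obtain $C\bigl((\varepsilon^\prime/c)^{1/\mu}+\varepsilon^\prime\bigr)^\mu$; same idea, different algebra, and your closing comment about the role of the truncation is precisely the point of the lemma. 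Where you genuinely diverge is the lower bound $\lambda(\xi)>c/\varepsilon$: the paper re-derives it quantitatively from ($\mathcal{SA}$), ($\mathcal{SH}$) and ($\mathcal{PG}$), getting $\lambda(\xi)\ge\left(1/C-C2^{\mu-1}\varepsilon^{\prime\,\mu}\right)\lambda(\xi_0)-C2^{\mu-1}$ and then discarding the constant term using $\lambda(\xi_0)>c/\varepsilon^\prime$, whereas you invoke the slowly varying condition \eqref{s_v}, which is legitimate here since Remark \ref{rmk:15} guarantees it for the weights of this section, with the same exponent $\mu$ that enters the definition \eqref{omega_neighb_eucl} of $\lambda-$neighborhoods. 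Your route is shorter and yields cleaner constants (for that half, $\varepsilon^\prime\le\min\{1/C,\varepsilon/C\}$ suffices), at the price of leaning on the previously established property \eqref{s_v}; the paper's computation is self-contained in the standing hypotheses. Both arguments correctly note that the final choice of $\varepsilon^\prime$ depends only on $\varepsilon$, $c$ and the structural constants of $\lambda$, hence is uniform in $X$.
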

\begin{proof}
Let $0<\varepsilon^\prime<\min\{1,\varepsilon\}$ be such that $X\cap\left\{\lambda(\xi)>c/\varepsilon^\prime\right\}$ be nonempty and take an arbitrary $\xi\in\left(X\cap\left\{\lambda(\xi)>c/\varepsilon^\prime\right\}\right)_{\varepsilon^\prime\lambda}$\footnote{If $X$ is unbounded then $X\cap\left\{\lambda(\xi)>c/\varepsilon^\prime\right\}\neq\emptyset$ for $\varepsilon^\prime>0$ arbitrarily small, because of the left inequality of \eqref{p_g}.}; then there exists some $\xi_0\in X$ such that
\begin{equation}\label{eq:1}
\vert\xi-\xi_0\vert<\varepsilon^\prime\lambda(\xi_0)^{1/\mu}\qquad\mbox{and}\qquad\lambda(\xi_0)>c/\varepsilon^\prime\,.
\end{equation}
From \eqref{p_g} and \eqref{eq:1} we get
\begin{equation}\label{eq:1.1}
\begin{split}
\lambda(\xi-\xi_0)&\le C(1+\vert\xi-\xi_0\vert)^\mu\le C2^{\mu-1}(1+\vert\xi-\xi_0\vert^\mu)\\
&<C2^{\mu-1}(1+\varepsilon^{\prime\,\mu}\lambda(\xi_0))<C2^{\mu-1}(\varepsilon^\prime/c\lambda(\xi_0)+\varepsilon^{\prime\,\mu}\lambda(\xi_0))\\
&<C2^{\mu-1}\varepsilon^\prime(1/c+1)\lambda(\xi_0)\,,
\end{split}
\end{equation}
hence $\lambda(\xi-\xi_0)<\varepsilon\lambda(\xi_0)$ provided that $\varepsilon^\prime$ is such that
\begin{equation*}
C2^{\mu-1}\varepsilon^\prime(1/c+1)<\varepsilon\,.
\end{equation*}
Thus $\xi\in X_{[\varepsilon\lambda]}$ provided that $0<\varepsilon^\prime<\min\left\{1,\frac{\varepsilon}{C2^{\mu-1}(1/c+1)}\right\}$.

Let us now prove that $\lambda(\xi)>c/\varepsilon$ up to a further shrinking of $\varepsilon^\prime$. We use again conditions ($\mathcal{SA}$), ($\mathcal{SH}$), ($\mathcal{PG}$) and \eqref{eq:1.1} to find
\begin{equation*}
\begin{split}
\lambda(\xi)&\ge 1/C\lambda(\xi_0)-\lambda(\xi-\xi_0)\ge 1/C\lambda(\xi_0)-C(1+\vert\xi-\xi_0\vert)^\mu\\
&\ge 1/C\lambda(\xi_0)-C2^{\mu-1}(1+\vert\xi-\xi_0\vert^\mu)>1/C\lambda(\xi_0)-C2^{\mu-1}(1+\varepsilon^{\prime\,\mu}\lambda(\xi_0))\\
&=\left(1/C-C2^{\mu-1}\varepsilon^{\prime\,\mu}\right)\lambda(\xi_0)-C2^{\mu-1}\,,
\end{split}
\end{equation*}
from which we deduce, using also \eqref{eq:1},
\begin{equation*}
\lambda(\xi)>\frac1{2C}\lambda(\xi_0)-C2^{\mu-1}>\frac{c}{2C\varepsilon^\prime}-C2^{\mu-1}>\frac{c}{4C\varepsilon^\prime}>\frac{c}{\varepsilon}\,,
\end{equation*}
provided that $\varepsilon^\prime>0$ is chosen such that
\begin{equation*}
\varepsilon^\prime<\min\left\{\frac{1}{2C^{2/\mu}},\frac{c}{2^{\mu+1}C^2},\frac{\varepsilon}{4C}\right\}\,.
\end{equation*}
This ends the proof that $\xi\in X_{[\varepsilon\lambda]}\cap\left\{\lambda(\xi)>c/\varepsilon\right\}$.
\end{proof}
\begin{remark}\label{rmk:intorni1}
{\rm If $X$ is bounded, the set $X\cap\left\{\lambda(\xi)>c/\varepsilon^\prime\right\}$ (hence the neighborhood $\left(X\cap\left\{\lambda(\xi)>c/\varepsilon^\prime\right\}\right)_{\varepsilon^\prime\lambda}$) is empty for $\varepsilon^\prime>0$ sufficiently small, thus the inclusion \eqref{inclusione_intorni} becomes trivial. However, thanks to \eqref{inclusione_intorni}, this never occurs when $X$ is unbounded; in such a case the set $X\cap\left\{\lambda(\xi)>c/\varepsilon^\prime\right\}$ is nonempty for arbitrarily small $\varepsilon^\prime>0$, since $\lambda$ is unbounded on $X$ as a consequence of the left inequality in \eqref{p_g}. This yields in particular that, for an unbounded set $X$ the $[\lambda]-$neighborhood $X_{[\varepsilon\lambda]}$ is nonempty with size $\varepsilon>0$ arbitrarily small, cf. Remark \ref{rmk:intorni}.}
\end{remark}
\begin{corollary}\label{cor_intorni}
For every $\varepsilon>0$ there exists $0<\varepsilon^\prime<\varepsilon$ such that for all $X\subset\mathbb R^n$
\begin{equation}\label{inclusione_intorni_2}
\left(X_{[\varepsilon^\prime\lambda]}\right)_{\varepsilon^\prime\lambda}\subset X_{[\varepsilon\lambda]}\,.
\end{equation}
\end{corollary}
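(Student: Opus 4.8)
The plan is to bridge the two kinds of neighborhoods by first converting the (Euclidean) $\lambda$-neighborhood into a $[\lambda]$-neighborhood via Lemma \ref{lemma_inclusione_intorni}, and then collapsing the resulting pair of nested $[\lambda]$-neighborhoods by means of the inclusion \eqref{inc_1} of Lemma \ref{mcl_lemma1}. The crucial point that makes the first conversion possible is that the cut-off $\{\lambda>c/\varepsilon'\}$ appearing in \eqref{inclusione_intorni} is automatically harmless when the base set is itself a $[\lambda]$-neighborhood of size $\varepsilon'$: indeed, by \eqref{mcl_impl} every $\xi\in X_{[\varepsilon'\lambda]}$ satisfies $\lambda(\xi)>\widehat c/\varepsilon'$, so that $X_{[\varepsilon'\lambda]}=X_{[\varepsilon'\lambda]}\cap\{\lambda>\widehat c/\varepsilon'\}$.

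Concretely, given $\varepsilon>0$ I would first invoke Lemma \ref{mcl_lemma1}, inclusion \eqref{inc_1}, to pick $0<\delta<\varepsilon$ such that $\left(Y_{[\delta\lambda]}\right)_{[\delta\lambda]}\subset Y_{[\varepsilon\lambda]}$ for every $Y\subset\mathbb R^n$; since both operations defining a $[\lambda]$-neighborhood are monotone in the size and in the base set (as is clear from \eqref{omega_neighb}), I may shrink $\delta$ so as to also have $\delta\le\widehat\varepsilon$, where $\widehat\varepsilon$ is the constant of Lemma \ref{mcl_lemma1}, without destroying \eqref{inc_1}. Next I would apply Lemma \ref{lemma_inclusione_intorni} with $c=\widehat c$ and with $\delta$ playing the role of the target size there; this produces $0<\varepsilon'<\delta$ such that $\left(Z\cap\{\lambda>\widehat c/\varepsilon'\}\right)_{\varepsilon'\lambda}\subset Z_{[\delta\lambda]}$ for every $Z\subset\mathbb R^n$.

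It then remains to assemble the chain. Taking $Z=X_{[\varepsilon'\lambda]}$ and using $X_{[\varepsilon'\lambda]}=X_{[\varepsilon'\lambda]}\cap\{\lambda>\widehat c/\varepsilon'\}$ (valid since $\varepsilon'<\delta\le\widehat\varepsilon$, so that \eqref{mcl_impl} applies), the previous step gives $\left(X_{[\varepsilon'\lambda]}\right)_{\varepsilon'\lambda}\subset\left(X_{[\varepsilon'\lambda]}\right)_{[\delta\lambda]}$. Finally, because $\varepsilon'<\delta$ yields $X_{[\varepsilon'\lambda]}\subset X_{[\delta\lambda]}$ and the outer $[\delta\lambda]$-operation is monotone in its base set, I conclude $\left(X_{[\varepsilon'\lambda]}\right)_{[\delta\lambda]}\subset\left(X_{[\delta\lambda]}\right)_{[\delta\lambda]}\subset X_{[\varepsilon\lambda]}$, which is exactly \eqref{inclusione_intorni_2} with $\varepsilon'<\varepsilon$.

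I expect the only genuinely delicate point to be the correct matching of the three size parameters $\varepsilon'<\delta<\varepsilon$, so that the Euclidean-to-$[\lambda]$ conversion lands in a $[\lambda]$-neighborhood whose size $\delta$ is still fine enough for the self-absorption \eqref{inc_1}; everything else reduces to the monotonicity of the neighborhoods and to the automatic lower bound \eqref{mcl_impl} on $\lambda$ inside a $[\lambda]$-neighborhood. No new estimate on $\lambda$ beyond those already used in the proof of Lemma \ref{lemma_inclusione_intorni} is needed.
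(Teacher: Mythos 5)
Your proof is correct and follows essentially the same route as the paper's: both combine the self-absorption property \eqref{inc_1} of Lemma \ref{mcl_lemma1}, the lower bound \eqref{mcl_impl} (which, as you observe, makes the cut-off $\{\lambda>\widehat c/\varepsilon^\prime\}$ harmless on a $[\lambda]$-neighborhood of size $\varepsilon^\prime\le\widehat\varepsilon$), and the conversion Lemma \ref{lemma_inclusione_intorni}, glued together by monotonicity of the neighborhoods in both the size and the base set. The only, immaterial, difference is the order of operations: you apply Lemma \ref{lemma_inclusione_intorni} to $Z=X_{[\varepsilon^\prime\lambda]}$ and enlarge the base set afterwards, whereas the paper first enlarges $X_{[\varepsilon^\prime\lambda]}\subset X_{[\varepsilon^\ast\lambda]}$ inside the Euclidean neighborhood and then converts.
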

\begin{proof}
 In view of Lemma \ref{mcl_lemma1} we first notice that for arbitrary $\varepsilon>0$ we may find $0<\varepsilon^\ast<\varepsilon$ sufficiently small such that
 \begin{equation*}
 \left(X_{[\varepsilon^\ast\lambda]}\right)_{[\varepsilon^\ast\lambda]}\subset X_{[\varepsilon\lambda]}\,.
 \end{equation*}
Then combining the results of Lemma \ref{mcl_lemma1} and Lemma \ref{lemma_inclusione_intorni}, with $X_{[\varepsilon^\ast\lambda]}$ instead of $X$, another $0<\varepsilon^\prime<\varepsilon^\ast$ sufficiently small can be chosen such that
\begin{equation*}
\begin{split}
\left(X_{[\varepsilon^\prime\lambda]}\right)&_{\varepsilon^\prime\lambda}\equiv\left(X_{[\varepsilon^\prime\lambda]}\cap\{\lambda(\xi)>\hat c/\varepsilon^\prime\}\right)_{\varepsilon^\prime\lambda}\\
&\subset\left(X_{[\varepsilon^\ast\lambda]}\cap\{\lambda(\xi)>\hat c/\varepsilon^\prime\}\right)_{\varepsilon^\prime\lambda}\subset \left(X_{[\varepsilon^\ast\lambda]}\right)_{[\varepsilon^\ast\lambda]}\subset X_{[\varepsilon\lambda]}\,,
\end{split}
\end{equation*}
where $\hat c>0$ is given in Lemma \ref{mcl_lemma1}. The proof is complete.
\end{proof}
In order to perform the subsequent analysis, the next technical lemma will be useful; for its proof, the reader is addressed to \cite[Lemma 1.10]{RO1}, see also \cite[Lemma 1]{GM2.1}.
\begin{proposition}\label{mcl_prop_1}
For arbitrary $\varepsilon>0$ and $X\subset\mathbb R^n$ there exists a symbol $\sigma=\sigma(\xi)\in S^0_{\lambda}$ such that ${\rm supp}\,\sigma\subset X_{\varepsilon\lambda}$ and $\sigma(\xi)=1$ if $\xi\in X_{\varepsilon^\prime\lambda}$, for a suitable $\varepsilon^\prime>0$, with $0<\varepsilon^\prime<\varepsilon$, depending only on $\varepsilon$ and $\lambda$. Moreover for every $x_0\in\Omega$, where $\Omega\subset\mathbb R^n$ is an open set, there exists a symbol $\tau_0(x,\xi)\in S^0_{\lambda}(\Omega)$ such that ${\rm supp}\,\tau_0\subset X_{\varepsilon\lambda}(x_0)$ and $\tau_0(x,\xi)=1$, for $(x,\xi)\in X_{\varepsilon^\ast\lambda}(x_0)$, with a suitable $\varepsilon^\ast$ satisfying $0<\varepsilon^\ast<\varepsilon$.
\end{proposition}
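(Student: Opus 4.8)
The plan is to reduce the statement to the construction of a smooth partition of unity on $\mathbb R^n_\xi$ adapted to the length scale $\lambda(\xi)^{1/\mu}$, and then to assemble $\sigma$ by summing those members of this partition whose supports meet a suitably chosen intermediate neighborhood of $X$. The guiding observation is that a bump localized at scale $\lambda(\xi)^{1/\mu}$ has $\xi$-derivatives of order $|\alpha|$ of size $\lambda(\xi)^{-|\alpha|/\mu}$, which is exactly the decay required for membership in $S^0_\lambda=S^0_{1/\mu,\lambda}$ (cf. \eqref{smooth_symb} with $r=0$, $\rho=1/\mu$). The whole construction is powered by the slowly varying condition \eqref{s_v}, which guarantees that $\lambda$ stays comparable to a constant on each Euclidean ball of radius $\frac1C\lambda^{1/\mu}$; this is precisely H\"ormander's slowly varying metric condition for the metric $|d\xi|^2/\lambda(\xi)^{2/\mu}$.

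First I would fix a small parameter $\delta\in(0,\frac1{2C}]$, to be adjusted later in terms of $\varepsilon$ and the structural constants, and produce a countable family of points $\{\xi_k\}\subset\mathbb R^n$ such that the balls $B_k:=B(\xi_k,\delta\lambda(\xi_k)^{1/\mu})$ cover $\mathbb R^n$ while the dilated balls $B(\xi_k,2\delta\lambda(\xi_k)^{1/\mu})$ have uniformly bounded overlap. The existence of such a Besicovitch/Whitney covering is standard once \eqref{s_v} is available, since within the relevant distances $\lambda$ stays comparable to $\lambda(\xi_k)$ with constants depending only on $C$. Subordinate to this covering I would build $g_k\in C_0^\infty$, $\mathrm{supp}\,g_k\subset B(\xi_k,2\delta\lambda(\xi_k)^{1/\mu})$, with $\sum_k g_k\equiv 1$ and the uniform estimates $|\partial^\alpha_\xi g_k(\xi)|\le C_\alpha\lambda(\xi)^{-|\alpha|/\mu}$; these hold because each $g_k$ is a fixed profile rescaled to the scale $\lambda(\xi_k)^{1/\mu}\asymp\lambda(\xi)^{1/\mu}$ on its support, the comparability again coming from \eqref{s_v}.

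Then I would set $\varepsilon':=\min\{\varepsilon/2,1/C\}$ and define $\sigma:=\sum_{k\in K}g_k$, where $K$ collects the indices $k$ with $\mathrm{supp}\,g_k\cap X_{\varepsilon'\lambda}\neq\emptyset$. By bounded overlap this sum is locally finite, so $\sigma\in C^\infty$, and the uniform estimates above together with the finite-overlap property give $|\partial^\alpha_\xi\sigma(\xi)|\le C_\alpha'\lambda(\xi)^{-|\alpha|/\mu}$, i.e. $\sigma\in S^0_\lambda$. If $\xi\in X_{\varepsilon'\lambda}$, then every $g_k$ nonvanishing at $\xi$ has $\xi\in\mathrm{supp}\,g_k$, hence $k\in K$, so $\sigma(\xi)=\sum_k g_k(\xi)=1$; this yields $\sigma\equiv1$ on $X_{\varepsilon'\lambda}$. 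The support inclusion $\mathrm{supp}\,\sigma\subset X_{\varepsilon\lambda}$ is the step that needs genuine care: for $k\in K$ one picks $\eta\in\mathrm{supp}\,g_k\cap X_{\varepsilon'\lambda}$, so $\eta\in B(\xi_0,\varepsilon'\lambda(\xi_0)^{1/\mu})$ for some $\xi_0\in X$, and for any $\zeta\in\mathrm{supp}\,g_k$ one estimates $|\zeta-\xi_0|\le|\zeta-\xi_k|+|\xi_k-\eta|+|\eta-\xi_0|$. Using \eqref{s_v} to compare $\lambda(\xi_k)$, $\lambda(\eta)$ and $\lambda(\xi_0)$ (all within controlled distances, hence comparable with a constant $C''$ depending only on $C$ and not on $\delta$), one bounds the right-hand side by $(4\delta C''^{1/\mu}+\varepsilon')\lambda(\xi_0)^{1/\mu}$; choosing $\delta$ so small that $4\delta C''^{1/\mu}\le\varepsilon/2$ forces $|\zeta-\xi_0|<\varepsilon\lambda(\xi_0)^{1/\mu}$, i.e. $\zeta\in X_{\varepsilon\lambda}$. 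This closes the first part; an alternative route is to mollify the characteristic function of $X_{\varepsilon'\lambda}$ with a kernel of $\xi$-dependent width $\delta\lambda(\xi)^{1/\mu}$, but the covering argument keeps the derivative bookkeeping cleanest.

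For the second part I would simply tensor $\sigma$ with a fixed spatial cutoff: choose $\theta\in C_0^\infty(B_\varepsilon(x_0))$ with $0\le\theta\le1$ and $\theta\equiv1$ on $B_{\varepsilon'}(x_0)$, and set $\tau_0(x,\xi):=\theta(x)\sigma(\xi)$. Since $\theta$ is a fixed compactly supported smooth function, $\partial^\beta_x\partial^\alpha_\xi\tau_0=(\partial^\beta\theta)(\partial^\alpha\sigma)$ inherits the estimates $|\partial^\beta_x\partial^\alpha_\xi\tau_0|\le C_{\alpha,\beta}\lambda(\xi)^{-|\alpha|/\mu}$, so $\tau_0\in S^0_\lambda\subset S^0_\lambda(\Omega)$. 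Recalling $X_{\varepsilon\lambda}(x_0)=B_\varepsilon(x_0)\times X_{\varepsilon\lambda}$, one has $\mathrm{supp}\,\tau_0\subset B_\varepsilon(x_0)\times X_{\varepsilon\lambda}=X_{\varepsilon\lambda}(x_0)$, while on $X_{\varepsilon^\ast\lambda}(x_0)$ with $\varepsilon^\ast:=\varepsilon'$ both factors equal $1$, giving $\tau_0\equiv1$ there. The only real obstacle in the whole argument is the first, $\xi$-only, construction, and within it the quantitative support inclusion; the derivative estimates and the spatial part are then routine consequences of \eqref{s_v} and the rescaling structure.
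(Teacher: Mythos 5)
Your proof is correct: the covering of $\mathbb R^n$ by balls of radius $\sim\delta\lambda(\xi_k)^{1/\mu}$ with bounded overlap (legitimate by the slowly varying condition \eqref{s_v}), the subordinate partition of unity with uniform estimates $\vert\partial^\alpha_\xi g_k\vert\le C_\alpha\lambda(\xi)^{-\vert\alpha\vert/\mu}$, the quantitative support inclusion obtained by comparing $\lambda(\xi_k)$, $\lambda(\eta)$, $\lambda(\xi_0)$ through \eqref{s_v}, and the tensorization $\tau_0=\theta(x)\sigma(\xi)$ all hold up, and your $\varepsilon^\prime$ depends only on $\varepsilon$ and the structural constants of $\lambda$, as the statement requires. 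Note that the paper itself offers no proof of this proposition, deferring to \cite[Lemma 1.10]{RO1} and \cite[Lemma 1]{GM2.1}, and the construction in those references is along the same lines as yours — a cutoff built from a partition of unity adapted to the slowly varying metric $\vert d\xi\vert^2/\lambda(\xi)^{2/\mu}$ — so your argument is essentially the standard one being cited.
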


\begin{remark}\label{rmk:17}
{\rm As an application of Corollary \ref{cor_intorni}, one can easily see that a statement similar to Proposition \ref{mcl_prop_1} also holds when $\lambda-$neighborhoods are replaced with the corresponding $[\lambda]-$neighborhoods; indeed for arbitrary $X\subset\mathbb R^n$ and $\varepsilon>0$, take $0<\tilde\varepsilon<\varepsilon$ such that $\left(X_{[\tilde\varepsilon\lambda]}\right)_{\tilde\varepsilon\lambda}\subset X_{[\varepsilon\lambda]}$ and apply the result of Proposition \ref{mcl_prop_1}, where $X$ is replaced by $X_{[\tilde\varepsilon\lambda]}$. Then some numbers $0<\varepsilon^{\prime\prime}<\varepsilon^\prime<\tilde\varepsilon$ and a symbol  $\sigma=\sigma(\xi)\in S^0_{\lambda}$ exist such that ${\rm supp}\,\sigma\subset \left(X_{[\tilde\varepsilon\lambda]}\right)_{\varepsilon^\prime\lambda}\subset \left(X_{[\tilde\varepsilon\lambda]}\right)_{\tilde\varepsilon\lambda}\subset X_{[\varepsilon\lambda]}$ and $\sigma\equiv 1$ on $\left(X_{[\tilde\varepsilon\lambda]}\right)_{\varepsilon^{\prime\prime}\lambda}$ (hence on $X_{[\tilde\varepsilon\lambda]}$). As for the construction of a counterpart of the variable coefficients symbol $\tau_0(x,\xi)\in  S^0_{\lambda}(\Omega)$ in the second part of the statement above, it comes from the use of the symbol $\sigma(\xi)$ by following the same lines as in Proposition \ref{mcl_prop_1}, see \cite[Lemma 1]{GM2.1}.}
\end{remark}

\begin{definition}\label{mcl_ell}
Let us consider a symbol $a(x,\xi)\in S^r_{\rho,\lambda}(\Omega)$, $x_0\in\Omega$ and $X\subset\mathbb R^n$. We say that $a(x,\xi)$ (or the corresponding pseudodifferential operator) is microlocally $[\lambda]-$elliptic in $X$ at point $x_0$, writing $a(x,\xi)\in{\rm mce}_{r,[\lambda]}X(x_0)$, if there exist constants $c_0>0$ and $\varepsilon>0$ sufficiently small such that
\begin{equation}\label{mcl_ell_ineq}
\vert a(x_0,\xi)\vert\ge c_0\lambda(\xi)^r\,,\qquad\mbox{for}\,\,\xi\in X_{[\varepsilon\lambda]}\,.
\end{equation}
\end{definition}
\begin{remark}\label{rmk:16}
{\rm Let us remark that in the above definition we do not explicitly require that frequencies $\xi$, for which \eqref{mcl_ell_ineq} holds true, are larger than some positive constant (that is usual when defining an ellipticity condition, cf. \eqref{ell}); indeed, because of Lemma \ref{mcl_lemma1}, $\xi\in X_{[\varepsilon\lambda]}$ yields $\lambda(\xi)>\hat c/\varepsilon$ and, for sufficiently small $\varepsilon>0$, the latter turns out to be a largeness condition on $\xi$, in view of the polynomial growth condition ($\mathcal{PG}$).}
\end{remark}
Let us recall the following notion, providing a microlocal counterpart of the notion of regularizing symbol
\begin{definition}
We say that a symbol $a(x,\xi)\in S^r_{\rho,\lambda}(\Omega)$ is rapidly decreasing in $\Theta\subset \Omega\times{\bf R}^n$ if there exists $a_0(x,\xi)\in S^r_{\rho,\lambda}(\Omega)$ such that $a(x,\xi)-a_0(x,\xi)\in S^{-\infty}(\Omega)$ and  $a_0(x,\xi)=0$ in $\Theta$.
\end{definition}
The following notion is a natural substitute of that of {\it characteristic set} of a symbol, in the absence of any homogeneity property.
\begin{definition}
We define the characteristic filter of a symbol $a(x,\xi)\in S^r_{\rho,\lambda}(\Omega)$ at a point $x_0\in\Omega$ to be the set
\begin{equation}\label{char_filter}
\Sigma_{[\lambda], x_0}a:=\left\{ X\subset\mathbb R^n\,:\,\,\, a(x,\xi)\in{\rm mce}_{r,[\lambda]}(\mathbb R^n\setminus X)(x_0)\right\}\,.
\end{equation}
\end{definition}
Using Lemma \ref{mcl_lemma1}, it is easy to check that $\Sigma_{[\lambda], x_0}a$ is a $[\lambda]-$filter.

\smallskip
The reader is addressed to \cite{RO1} and \cite{GM3} where analogous notions as above are stated in a more general setting.

\smallskip
Arguing on the properties of $\lambda-$neighborhoods of a set and the slowly varying condition \eqref{s_v} as in the proof of \cite[Lemma 4.3]{GM3}, one can prove that $a(x,\xi)\in S^s_{\rho,\lambda}(\Omega)$ is microlocally $[\lambda]-$elliptic in $X$ at point $x_0$ if and only if
\begin{equation}\label{mcl_ell_ineq_BIS}
\vert a(x,\xi)\vert\ge c^\ast\lambda(\xi)^r\,,\qquad\mbox{for}\,\,(x,\xi)\in B_{\tilde\varepsilon}(x_0)\times \left(X_{[\tilde\varepsilon\lambda]}\right)_{\tilde\varepsilon\lambda}\,,
\end{equation}
for suitable constants $c^\ast>0$ and sufficiently small $\tilde\varepsilon>0$.

Then following the same lines of the proof of \cite[Theorem 4.6]{GM3} one can prove the following
\begin{proposition}\label{mcl_parametrix}
For every symbol $a(x,\xi)\!\in\! S^r_{\rho,\lambda}(\Omega)$ microlocally $[\lambda]\!-$elliptic in $\{x_0
\}\times X$, there exists a symbol $b(x,\xi)\in S^{-r}_{\rho,\lambda}(\Omega)$ such that the associated operator $b(x,D)$ is properly supported and
\begin{equation}\label{MPTEO1a}
b(x,D)a(x,D)={\rm Id}+c(x,D),
\end{equation}
where $c(x,\xi)\in S^0_{\rho,\lambda}(\Omega)$ is rapidly decreasing in $B_{\tilde\varepsilon}(x_0)
\times \left(X_{[\tilde\varepsilon\lambda]}\right)_{\tilde\varepsilon\lambda}$ for a suitable small $\tilde\varepsilon>0$.
\end{proposition}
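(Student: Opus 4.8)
The plan is to adapt the classical microlocal parametrix construction to the present weighted, inhomogeneous setting, relying on the symbolic calculus for the classes $S^r_{\rho,\lambda}(\Omega)$ (available as recalled in Section \ref{appl_sct}, see \cite[Sect.~1]{GM3}) together with the microlocal cutoff symbols furnished by Proposition \ref{mcl_prop_1} and Remark \ref{rmk:17}. The starting point is the reformulation \eqref{mcl_ell_ineq_BIS} of microlocal $[\lambda]-$ellipticity: there are $c^\ast>0$ and $\tilde\varepsilon>0$ such that $\vert a(x,\xi)\vert\ge c^\ast\lambda(\xi)^r$ on the full neighborhood $B_{\tilde\varepsilon}(x_0)\times\left(X_{[\tilde\varepsilon\lambda]}\right)_{\tilde\varepsilon\lambda}$.

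First I would fix, using Remark \ref{rmk:17} and Corollary \ref{cor_intorni}, a cutoff symbol $\psi(x,\xi)\in S^0_{\rho,\lambda}(\Omega)$ that is identically $1$ on a slightly smaller neighborhood of $\{x_0\}\times X$ and supported where the ellipticity estimate \eqref{mcl_ell_ineq_BIS} holds. Then the quotient
\[
b_0(x,\xi):=\frac{\psi(x,\xi)}{a(x,\xi)}
\]
is well defined and, thanks to the lower bound on $\vert a\vert$ and the decay estimates \eqref{smooth_symb}, belongs to $S^{-r}_{\rho,\lambda}(\Omega)$. By the composition formula, the operator $b_0(x,D)a(x,D)$ has symbol with principal part $b_0 a=\psi$, so that $b_0(x,D)a(x,D)=\psi(x,D)+R_0(x,D)$ with $R_0\in S^{-\rho}_{\rho,\lambda}(\Omega)$ supported in ${\rm supp}\,\psi$.

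Next I would set up the usual iterative correction, determining symbols $b_j\in S^{-r-j\rho}_{\rho,\lambda}(\Omega)$ recursively so as to cancel, order by order in powers of $\lambda^{-\rho}$, the successive remainders produced by the composition expansion; each $b_j$ is obtained by dividing a known lower-order symbol by $a$ (equivalently, by multiplying by $b_0$), which is legitimate precisely on the region where $\psi\equiv 1$ and where \eqref{mcl_ell_ineq_BIS} is available. Here the nested inclusions of Lemma \ref{mcl_lemma1} and Corollary \ref{cor_intorni} are used to guarantee that all the cutoffs, hence all the $b_j$, are supported in a common microlocal region over which the ellipticity holds. I would then form the asymptotic sum $b\sim\sum_{j\ge 0}b_j\in S^{-r}_{\rho,\lambda}(\Omega)$, which is well defined since these classes admit asymptotic summation, and replace $b$ by a properly supported operator differing from it by a regularizing one (this is always possible and affects neither the order nor the microlocal remainder). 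The recursive cancellation of all orders, combined with $\psi\equiv 1$ on the inner neighborhood and the asymptotic summation property, then forces $c:=b\,\#\,a-{\rm Id}\in S^0_{\rho,\lambda}(\Omega)$ to be rapidly decreasing on $B_{\tilde\varepsilon}(x_0)\times\left(X_{[\tilde\varepsilon\lambda]}\right)_{\tilde\varepsilon\lambda}$ for a suitably shrunk $\tilde\varepsilon>0$, which is exactly the claim in \eqref{MPTEO1a}.

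The main obstacle will be the bookkeeping of the microlocal localization rather than any single estimate: since the ellipticity is only available on the set $\left(X_{[\tilde\varepsilon\lambda]}\right)_{\tilde\varepsilon\lambda}$, one must verify that at every step the remainder terms arising from the composition formula remain supported inside a fixed such neighborhood, and that the finitely many shrinkings of $\varepsilon$ needed to invoke Proposition \ref{mcl_prop_1}, Lemma \ref{mcl_lemma1} and Corollary \ref{cor_intorni} can be carried out consistently. Once this nesting of $[\lambda]-$ and $\lambda-$neighborhoods is organized, exactly as in the proof of \cite[Theorem 4.6]{GM3}, the algebraic part of the parametrix construction and the asymptotic summation proceed by the standard arguments of the calculus for $S^r_{\rho,\lambda}(\Omega)$.
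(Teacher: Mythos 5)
Your proposal is correct and takes essentially the same route as the paper: the paper's own ``proof'' consists of invoking \cite[Theorem 4.6]{GM3}, whose argument is exactly the classical microlocal parametrix iteration you reconstruct --- the reformulated ellipticity \eqref{mcl_ell_ineq_BIS}, the cutoff symbols of Proposition \ref{mcl_prop_1} and Remark \ref{rmk:17}, the nested-neighborhood bookkeeping via Lemma \ref{mcl_lemma1} and Corollary \ref{cor_intorni}, and the asymptotic summation provided by the symbolic calculus for $S^r_{\rho,\lambda}(\Omega)$. The only cosmetic imprecision is that your remainders (e.g.\ $R_0$) are supported in ${\rm supp}\,\psi$ only modulo regularizing symbols, which is harmless because rapid decrease is defined modulo $S^{-\infty}(\Omega)$.
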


\medskip
For $s\in\mathbb R$, $p\in[1,+\infty]$, $U$ open neighborhood of $x_0\in\mathbb R^n$ and $X\subset\mathbb R^n$ given, let $\mathcal FL^p_{s,\lambda,{\rm loc}}(U)$ and $\mathcal FL^p_{s,\lambda,{\rm mcl}}(x_0\times X)$ denote the local and microlocal Fourier Lebesgue classes corresponding to the weight function $\lambda^s$, according to Definitions  \ref{loc_w_FL_spaces}, \ref{micro_FL_space}. Agreeing with these notations, we denote by $\Xi_{[\lambda],\mathcal FL^p_{s,\lambda},x_0}$ the related $[\lambda]-$filter of Fourier Lebesgue singularities.

\medskip
By resorting to Proposition \ref{mcl_prop_1} and arguing similarly as in the proof of \cite[Proposition 4.5]{GM2.2} and \cite[Proposition 4.10]{GM3}, we are able to prove the following characterization of microlocal Fourier Lebesgue spaces.

\begin{proposition}\label{char_FL}
Let $x_0\in\Omega$, $\Omega\subset\mathbb R^n$ open set, and $X\subset\mathbb R^n$ be given. A distribution $u\in\mathcal D^\prime(\mathbb R^n)$ belongs to $\mathcal FL^p_{s,\lambda,{\rm mcl}}(x_0\times X)$ if and only if one of the following two equivalent conditions is satisfied:
\begin{itemize}
\item[i.] there exist constants $0<\varepsilon^\prime<\varepsilon$ sufficiently small and $\phi\in C^\infty_0(\Omega)$, with $\phi(x_0)\neq 0$, such that
    \begin{equation}\label{char_eq1}
    \sigma(D)(\phi u)\in \mathcal FL^p_{s,\lambda}(\mathbb R^n)\,,
    \end{equation}
    where $\sigma=\sigma(\xi)\in S^0_{\lambda}$ is some symbol satisfying ${\rm supp}\,\sigma\subset X_{[\varepsilon\lambda]}$ and $\sigma\equiv 1$ on $X_{[\varepsilon^\prime\lambda]}$;
\item[ii.] There exist an operator $\tau(x,D)\in \widetilde{\rm Op}\,S^0_{\lambda}(\Omega)$ microlocally $[\lambda]-$elliptic in $\{x_0\}\times X$ such that
    \begin{equation}\label{char_eq2}
    \tau(x,D)u\in\mathcal FL^p_{s,\lambda,{\rm loc}}(\Omega)\,.
    \end{equation}
\end{itemize}
\end{proposition}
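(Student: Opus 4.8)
The plan is to prove the equivalences through the cycle: membership $u\in\mathcal FL^p_{s,\lambda,{\rm mcl}}(x_0\times X)$ $\Leftrightarrow$ i., then i.\ $\Rightarrow$ ii.\ $\Rightarrow$ membership. I first record that, by Definition \ref{micro_FL_space}, the membership means that there are $\phi\in C^\infty_0(\Omega)$ with $\phi(x_0)\neq 0$ and $\varepsilon>0$ such that $\chi_{X_{[\varepsilon\lambda]}}\,\lambda^s\,\widehat{\phi u}\in L^p(\mathbb R^n)$; here the neighborhoods $X_{[\varepsilon\lambda]}$ of \eqref{omega_neighb} are those built on the base weight $\lambda$ (which fulfils ($\mathcal{SA}$), ($\mathcal{SH}$)), the exponent $s$ entering only through the weight $\lambda^s$ in the $L^p$ integral.

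For the direction membership $\Rightarrow$ i., I would fix $\phi$ and $\varepsilon_0$ as above, pick $0<\varepsilon\le\varepsilon_0$, and invoke Remark \ref{rmk:17} (the $[\lambda]-$neighborhood analogue of Proposition \ref{mcl_prop_1}) to produce $\sigma\in S^0_\lambda$ with ${\rm supp}\,\sigma\subset X_{[\varepsilon\lambda]}$ and $\sigma\equiv 1$ on $X_{[\varepsilon'\lambda]}$ for some $0<\varepsilon'<\varepsilon$. Since $\widehat{\sigma(D)(\phi u)}=\sigma\,\widehat{\phi u}$ is supported in $X_{[\varepsilon\lambda]}\subset X_{[\varepsilon_0\lambda]}$ and $\sigma$ is bounded, one gets $\|\sigma(D)(\phi u)\|_{\mathcal FL^p_{s,\lambda}}\le\|\sigma\|_{L^\infty}\,\|\chi_{X_{[\varepsilon_0\lambda]}}\lambda^s\widehat{\phi u}\|_{L^p}<+\infty$, i.e.\ \eqref{char_eq1}. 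For the converse i.\ $\Rightarrow$ membership, on $X_{[\varepsilon'\lambda]}$ one has $\sigma\equiv1$, whence $\chi_{X_{[\varepsilon'\lambda]}}\lambda^s\widehat{\phi u}=\chi_{X_{[\varepsilon'\lambda]}}\lambda^s\widehat{\sigma(D)(\phi u)}\in L^p$, which is precisely the membership with neighborhood size $\varepsilon'$.

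With the equivalence membership $\Leftrightarrow$ i.\ in hand I may assume the cutoff $\phi$ of i.\ equals $1$ near $x_0$. I would then choose $\psi\in C^\infty_0(\Omega)$ with $\psi(x_0)\neq0$ and ${\rm supp}\,\psi\subset\{\phi\equiv1\}$, and take $\tau(x,D)\in\widetilde{\rm Op}\,S^0_\lambda(\Omega)$ to be a properly supported representative of $\psi(x)\sigma(D)$. As $\sigma\equiv1$ on $X_{[\varepsilon'\lambda]}$ and $\psi(x_0)\neq0$, the ellipticity inequality \eqref{mcl_ell_ineq} holds with $r=0$, so $\tau\in{\rm mce}_{0,[\lambda]}X(x_0)$. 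Splitting $\tau(x,D)u=\psi\,\sigma(D)(\phi u)+\psi\,\sigma(D)\bigl((1-\phi)u\bigr)$ modulo $C^\infty$, the second term is smooth because the multiplier $\sigma(D)$ is pseudolocal (its kernel is rapidly decreasing off the diagonal) and ${\rm supp}\,\psi\cap{\rm supp}(1-\phi)=\emptyset$, while $\sigma(D)(\phi u)\in\mathcal FL^p_{s,\lambda}$ by i.\ and multiplication by $\psi\in\mathcal S$ is bounded on $\mathcal FL^p_{s,\lambda}$ (a Young-type convolution estimate using the temperance of $\lambda^s$). Hence $\tau(x,D)u\in\mathcal FL^p_{s,\lambda,{\rm loc}}(\Omega)$, i.e.\ \eqref{char_eq2}, which is ii.

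Finally, for ii.\ $\Rightarrow$ membership I would apply the microlocal parametrix of Proposition \ref{mcl_parametrix}: $\tau\in{\rm mce}_{0,[\lambda]}X(x_0)$ yields a properly supported $b(x,D)$, $b\in S^0_\lambda(\Omega)$, and $c\in S^0_\lambda(\Omega)$ rapidly decreasing in $\Theta:=B_{\tilde\varepsilon}(x_0)\times(X_{[\tilde\varepsilon\lambda]})_{\tilde\varepsilon\lambda}$, with $b(x,D)\tau(x,D)={\rm Id}+c(x,D)$, so that $u=b(x,D)\bigl(\tau(x,D)u\bigr)-c(x,D)u$. The first term lies in $\mathcal FL^p_{s,\lambda,{\rm loc}}(\Omega)$ since $\tau(x,D)u$ does and the order-zero operator $b(x,D)$ preserves this space (Proposition \ref{continuity3} with $r=0$), a fortiori it lies in $\mathcal FL^p_{s,\lambda,{\rm mcl}}(x_0\times X)$; everything therefore reduces to proving that $c(x,D)u\in\mathcal FL^p_{s,\lambda,{\rm mcl}}(x_0\times X)$ as well. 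Writing $c=c_0+c_{-\infty}$ with $c_0=0$ on $\Theta$ and $c_{-\infty}\in S^{-\infty}(\Omega)$ regularizing, this is the assertion that an operator whose symbol vanishes on the inhomogeneous microlocal neighborhood $\Theta$ of $\{x_0\}\times X$ is microlocally $\mathcal FL^p_{s,\lambda}-$smoothing there, and this is the main obstacle. I expect to establish it by localizing $c_0(x,D)u$ with $\theta(D)\phi_0$, where $\phi_0\equiv1$ near $x_0$ is supported in $B_{\tilde\varepsilon}(x_0)$ and ${\rm supp}\,\theta\subset X_{[\varepsilon_1\lambda]}$ with $\varepsilon_1$ small, and by estimating the resulting weighted $L^p$ norm through the integral representation of the composed symbol together with Lemma \ref{lemma_cont2}: since $\phi_0(x)c_0(x,\xi)$ vanishes for $\xi\in(X_{[\tilde\varepsilon\lambda]})_{\tilde\varepsilon\lambda}$, an output frequency $\eta\in X_{[\varepsilon_1\lambda]}$ can be reached only from input frequencies $\xi$ outside that thickened neighborhood, which forces $\eta-\xi$ to be large in the $\lambda$-scale, so smoothness of $c_0$ in $x$ makes $\widehat{\phi_0 c_0}(\eta-\xi,\xi)$ decay faster than any power and absorbs $\lambda^s$ via \eqref{p_g}. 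Controlling this dichotomy rigorously, and in particular the repeated passage between $[\lambda]-$ and $\lambda-$neighborhoods governed by Lemma \ref{mcl_lemma1}, Lemma \ref{lemma_inclusione_intorni} and Corollary \ref{cor_intorni}, is the technical heart, carried out as in \cite[Proposition 4.5]{GM2.2} and \cite[Proposition 4.10]{GM3}; once it is in place the cycle closes and all three conditions are equivalent.
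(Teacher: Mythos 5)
The paper itself gives no proof of Proposition \ref{char_FL}; it only points to Proposition \ref{mcl_prop_1} and to the proofs of \cite[Proposition 4.5]{GM2.2}, \cite[Proposition 4.10]{GM3}, so your reconstruction can only be judged on its own terms. Its architecture is the natural one and matches the paper's pointers: membership $\Leftrightarrow$ i.\ via the cutoff symbol of Remark \ref{rmk:17}, then i.\ $\Rightarrow$ ii.\ by building $\tau$ from $\psi(x)\sigma(\xi)$, then ii.\ $\Rightarrow$ membership via the microlocal parametrix of Proposition \ref{mcl_parametrix} and Proposition \ref{continuity3}. The parts you execute in full (the equivalence membership $\Leftrightarrow$ i., the decomposition $u=b(x,D)\tau(x,D)u-c(x,D)u$ and the treatment of the term $b(x,D)\tau(x,D)u$) are correct.

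There is, however, a genuine gap in i.\ $\Rightarrow$ ii.: the sentence ``I may assume the cutoff $\phi$ of i.\ equals $1$ near $x_0$'' does not follow from what you proved. Membership and condition i.\ are existential in $\phi$, and your equivalence is established cutoff-by-cutoff, for one and the same $\phi$; nothing yet permits replacing a $\phi$ with $\phi(x_0)\neq 0$ by a cutoff identically $1$ near $x_0$. Your construction genuinely needs this: you require ${\rm supp}\,\psi\subset\{\phi\equiv 1\}$, a set which may well be empty, and without it the term $\psi\,\sigma(D)\bigl((1-\phi)u\bigr)$ is not negligible, since ${\rm supp}\,\psi$ then meets ${\rm supp}(1-\phi)$. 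The missing ingredient is a cutoff-invariance lemma: if $\chi_{X_{[\varepsilon\lambda]}}\lambda^s\widehat{\phi u}\in L^p$ and $g\in C^\infty_0$, then $\chi_{X_{[\varepsilon^\prime\lambda]}}\lambda^s\widehat{g\phi u}\in L^p$ for suitable $0<\varepsilon^\prime<\varepsilon$; applying it with $g=\rho/\phi$, $\rho\equiv 1$ near $x_0$ and ${\rm supp}\,\rho\subset\{\phi\neq 0\}$, yields the desired upgraded cutoff. This lemma is true and provable with the paper's tools — split $\widehat g\ast\widehat{\phi u}$ over $\xi\in X_{[\varepsilon\lambda]}$ (handled by temperance of $\lambda^s$ and Young's inequality) and $\xi\notin X_{[\varepsilon\lambda]}$, where \eqref{inc_2} of Lemma \ref{mcl_lemma1} forces $\lambda(\eta-\xi)\ge\varepsilon^\prime\lambda(\xi)$ and, by ($\mathcal{SA}$), $\lambda(\eta)\lesssim\lambda(\eta-\xi)$, so the rapid decay of $\widehat g$ absorbs all weights through ($\mathcal{PG}$) — but it must be stated and proved; it is of exactly the same nature as the frequency dichotomy you invoke later. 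Finally, note that your last step (microlocal regularity of $c_0(x,D)u$ when $c_0$ vanishes on $B_{\tilde\varepsilon}(x_0)\times\bigl(X_{[\tilde\varepsilon\lambda]}\bigr)_{\tilde\varepsilon\lambda}$) identifies the correct separation estimate but is deferred to \cite{GM2.2}, \cite{GM3} rather than carried out, so the proposal, like the paper, remains incomplete precisely at that point.
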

Following similar arguments to those in \cite[Propositions 5.1, 5.2]{GM2.2} we give the following results.
\begin{proposition}\label{cont1}
Let $s\in\mathbb R$, $r>0$, $x_0\in\Omega$, $X\subset\mathbb{R}^n$, $a(x,D)\in\widetilde{{\rm Op}}S^r_{\rho,\lambda}(\Omega)$ be given. Then for $p\in[1,\infty]$ and $u\in mcl\mathcal FL^p_{s,\lambda}(x_0\times X)$ one has $a(x,D)u\in mcl\mathcal FL^{p}_{s-r,\lambda}(x_0\times X)$.
\end{proposition}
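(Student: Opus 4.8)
The plan is to reduce everything to the microlocal ellipticity characterization of Proposition \ref{char_FL} together with the microlocal parametrix of Proposition \ref{mcl_parametrix}, in the spirit of \cite[Propositions 5.1, 5.2]{GM2.2}. By Proposition \ref{char_FL}(ii) applied at the target regularity level $s-r$, it will suffice to exhibit a single operator $\psi(x,D)\in\widetilde{{\rm Op}}S^0_{\lambda}(\Omega)$ which is microlocally $[\lambda]-$elliptic in $\{x_0\}\times X$ and for which $\psi(x,D)a(x,D)u\in\mathcal FL^p_{s-r,\lambda,{\rm loc}}(\Omega)$. Since $a(x,D)$ is properly supported, $a(x,D)u\in\mathcal D^\prime(\Omega)$ is well defined, and by the symbolic calculus $\psi(x,D)a(x,D)\in\widetilde{{\rm Op}}S^r_{\rho,\lambda}(\Omega)$.

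To set up the comparison I would first apply Proposition \ref{char_FL}(ii) to the hypothesis $u\in\mathcal FL^p_{s,\lambda,{\rm mcl}}(x_0\times X)$: this provides $\tau(x,D)\in\widetilde{{\rm Op}}S^0_{\lambda}(\Omega)$, microlocally $[\lambda]-$elliptic in $\{x_0\}\times X$, with $\tau(x,D)u\in\mathcal FL^p_{s,\lambda,{\rm loc}}(\Omega)$. By Proposition \ref{mcl_parametrix} there is then a properly supported $b(x,D)$, with $b(x,\xi)\in S^0_{\rho,\lambda}(\Omega)$, such that
\[
b(x,D)\tau(x,D)={\rm Id}+c(x,D),
\]
where $c(x,\xi)\in S^0_{\rho,\lambda}(\Omega)$ is rapidly decreasing in $\Theta:=B_{\tilde\varepsilon}(x_0)\times\left(X_{[\tilde\varepsilon\lambda]}\right)_{\tilde\varepsilon\lambda}$ for a suitably small $\tilde\varepsilon>0$. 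I would then choose $\psi(x,D)$ so that ${\rm supp}\,\psi\subset\Theta$ while $\psi\equiv 1$ on a smaller neighborhood of $\{x_0\}\times X$; the existence of such a $\psi\in S^0_{\lambda}(\Omega)$, microlocally elliptic in $\{x_0\}\times X$ but supported inside $\Theta$, follows from Proposition \ref{mcl_prop_1} and Remark \ref{rmk:17}, the admissible nesting of the $\varepsilon$-parameters being guaranteed by Lemma \ref{mcl_lemma1} and Corollary \ref{cor_intorni}.

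With these ingredients I would decompose
\[
\psi(x,D)a(x,D)u=\psi(x,D)a(x,D)b(x,D)\,\tau(x,D)u-\psi(x,D)a(x,D)c(x,D)u.
\]
For the first term, $\psi(x,D)a(x,D)b(x,D)\in\widetilde{{\rm Op}}S^r_{\rho,\lambda}(\Omega)$ is properly supported of order $r$, so by Proposition \ref{continuity3} (with $\omega=\lambda^{s-r}$, whence $\lambda^r\omega=\lambda^s$) it maps $\tau(x,D)u\in\mathcal FL^p_{s,\lambda,{\rm loc}}(\Omega)$ into $\mathcal FL^p_{s-r,\lambda,{\rm loc}}(\Omega)$. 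For the second term, the symbol of $\psi(x,D)a(x,D)$ is, modulo $S^{-\infty}(\Omega)$, supported in ${\rm supp}\,\psi\subset\Theta$, whereas $c$ is rapidly decreasing in $\Theta$; hence the two operators have disjoint microsupports, their composition $\psi(x,D)a(x,D)c(x,D)$ is regularizing, and the second term lies in $C^\infty(\Omega)\subset\mathcal FL^p_{s-r,\lambda,{\rm loc}}(\Omega)$. Adding the two contributions gives $\psi(x,D)a(x,D)u\in\mathcal FL^p_{s-r,\lambda,{\rm loc}}(\Omega)$, and Proposition \ref{char_FL}(ii) then yields $a(x,D)u\in\mathcal FL^p_{s-r,\lambda,{\rm mcl}}(x_0\times X)$, as claimed.

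The two mapping estimates are routine applications of the continuity and calculus results already available. The delicate point, and the main obstacle to make fully rigorous, is the geometry of the $[\lambda]-$ and $\lambda-$neighborhoods: one must trap ${\rm ess\,supp}\,\psi$ strictly inside the region $\Theta$ on which the parametrix remainder $c$ vanishes to infinite order, while keeping $\psi$ elliptic on the whole of $\{x_0\}\times X$. This requires the chain of inclusions placing $\{x_0\}\times X$ inside the ellipticity set of $\psi$, inside ${\rm supp}\,\psi$, inside $\Theta$, inside the microlocal ellipticity set of $\tau$, which is precisely what Lemma \ref{mcl_lemma1}, Lemma \ref{lemma_inclusione_intorni} and Corollary \ref{cor_intorni} are designed to furnish.
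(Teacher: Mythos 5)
Your argument is sound in substance, but note first that the paper itself contains \emph{no} proof of Proposition \ref{cont1}: it only refers to \cite[Propositions 5.1, 5.2]{GM2.2}. The route suggested by that citation is the classical one for the forward mapping property: starting from Proposition \ref{char_FL}.(i), one takes the Fourier cutoff $\sigma(D)$ and spatial cutoff $\phi$ furnished by the hypothesis on $u$, splits $\tilde\phi u=\tilde\phi\,\sigma(D)(\phi u)+\tilde\phi\,(1-\sigma(D))(\phi u)$, treats the first piece by the continuity results (Proposition \ref{continuity3}, ultimately Lemma \ref{lemma_cont2}) and the second by a disjoint-support argument; no parametrix is needed, that tool being reserved for the elliptic converse, Proposition \ref{cont2}. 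You instead invoke Proposition \ref{char_FL}.(ii), build a microlocal parametrix $b(x,D)$ of the characterizing operator $\tau(x,D)$ via Proposition \ref{mcl_parametrix}, and write $u=b(x,D)\tau(x,D)u-c(x,D)u$, so that $\psi(x,D)a(x,D)u=\psi(x,D)a(x,D)b(x,D)(\tau(x,D)u)-\psi(x,D)a(x,D)c(x,D)u$ plays the role of the classical splitting. This is a genuinely different, and heavier, route: it lets you argue purely at the operator level, using both directions of the characterization \ref{char_FL}.(ii), at the price of invoking the parametrix machinery where the standard proof needs none, and of justifying a ``disjoint microsupports implies regularizing'' step inside the weighted calculus $S^r_{\rho,\lambda}(\Omega)$.

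Two points must be attended to before your sketch is a complete proof. First, the symbol $\psi(x,\xi)$ you obtain from Proposition \ref{mcl_prop_1} and Remark \ref{rmk:17} is of the form (compactly $x$-supported cutoff)$\,\times\,$(Fourier cutoff), hence $\psi(x,D)$ is \emph{not} properly supported, whereas Proposition \ref{char_FL}.(ii) requires an operator in $\widetilde{\rm Op}\,S^0_{\lambda}(\Omega)$ and your appeal to the local-to-local part of Proposition \ref{continuity3} also presupposes proper support. This is repairable by standard means (correct $\psi(x,D)$ by a regularizing operator, and insert a cutoff $\tilde\phi$ with $\psi(x,D)a(x,D)b(x,D)(\tau(x,D)u)=\psi(x,D)a(x,D)b(x,D)(\tilde\phi\,\tau(x,D)u)$ so that the global-to-local part of Proposition \ref{continuity3} applies), but it should be said. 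Second, the assertion that $\psi(x,D)a(x,D)c(x,D)$ is regularizing is exactly the step that carries the weight of the proof: writing $c=c_0+(\hbox{smoothing})$ with $c_0\equiv 0$ on the open set $\Theta=B_{\tilde\varepsilon}(x_0)\times\left(X_{[\tilde\varepsilon\lambda]}\right)_{\tilde\varepsilon\lambda}$, one must check that the symbol of $\psi(x,D)a(x,D)$ can be taken, modulo $S^{-\infty}(\Omega)$, supported in ${\rm supp}\,\psi\subset\Theta$, so that every term $\partial^\alpha_\xi q\,D^\alpha_x c_0$ of the composition expansion vanishes identically and the completeness of the calculus (cited from \cite{GM3}) places the whole composed symbol in $S^{-\infty}(\Omega)$. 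You correctly identify this as the delicate point but assert rather than prove it; since the paper nowhere states such a composition lemma explicitly, spelling out this support bookkeeping is the one substantive addition your argument still needs.
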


\begin{proposition}\label{cont2}
For $s\in\mathbb R$, $r>0$, $x_0\in\Omega$, $X\subset\mathbb{R}^n$, let $a(x,D)\in\widetilde{\rm Op}S^r_{\rho,\Lambda}(\Omega)$ be microlocally $[\lambda]-$elliptic in $\{x_0\}\times X$. Then for every $p\in[1,\infty]$ and $u\in\mathcal{D}'(\mathbb R^n)$ such that $a(x,D)u\in mcl\mathcal FL^{p}_{s-r,\lambda}(x_0\times X)$ one has $u\in mcl\mathcal FL^{p}_{s}(x_0\times X)$.
\end{proposition}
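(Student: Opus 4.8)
The plan is to run the standard elliptic-regularity argument through the microlocal parametrix. Since $a(x,D)$ is microlocally $[\lambda]-$elliptic in $\{x_0\}\times X$, Proposition \ref{mcl_parametrix} provides a properly supported operator $b(x,D)$ with symbol $b(x,\xi)\in S^{-r}_{\rho,\lambda}(\Omega)$ such that
\[
b(x,D)a(x,D)=\mathrm{Id}+c(x,D),
\]
where $c(x,\xi)\in S^0_{\rho,\lambda}(\Omega)$ is rapidly decreasing in $B_{\tilde\varepsilon}(x_0)\times\left(X_{[\tilde\varepsilon\lambda]}\right)_{\tilde\varepsilon\lambda}$ for some small $\tilde\varepsilon>0$. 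Applying this identity to $u$ yields the decomposition
\[
u=b(x,D)\big(a(x,D)u\big)-c(x,D)u,
\]
and it suffices to show that both terms on the right belong to $mcl\mathcal FL^p_{s,\lambda}(x_0\times X)$.

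The first term is immediate from the continuity result just established. By hypothesis $a(x,D)u\in mcl\mathcal FL^p_{s-r,\lambda}(x_0\times X)$, while $b(x,D)$ is a properly supported operator of order $-r$; hence the mapping property of Proposition \ref{cont1} (whose proof carries over verbatim to operators of non-positive order) applies with index $s-r$ and order $-r$, giving $b(x,D)\big(a(x,D)u\big)\in mcl\mathcal FL^p_{(s-r)-(-r),\lambda}(x_0\times X)=mcl\mathcal FL^p_{s,\lambda}(x_0\times X)$.

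The heart of the proof is to show that the error term $c(x,D)u$ is microlocally smooth at $(x_0,X)$, hence lies in $mcl\mathcal FL^p_{s,\lambda}(x_0\times X)$ for every $s$. By the definition of rapidly decreasing symbol, I would split $c=c_0+\tilde c$ with $\tilde c\in S^{-\infty}(\Omega)$ and $c_0$ vanishing on $B_{\tilde\varepsilon}(x_0)\times\left(X_{[\tilde\varepsilon\lambda]}\right)_{\tilde\varepsilon\lambda}$, both operators properly supported. The regularizing part is harmless, since $\tilde c(x,D)u\in C^\infty(\Omega)$. For $c_0(x,D)u$ I would invoke the characterization of Proposition \ref{char_FL}: choosing $\phi\in C^\infty_0(B_{\tilde\varepsilon}(x_0))$ with $\phi(x_0)\neq 0$ and a symbol $\sigma\in S^0_{\lambda}$ with $\mathrm{supp}\,\sigma\subset X_{[\varepsilon\lambda]}$ and $\sigma\equiv 1$ on $X_{[\varepsilon^\prime\lambda]}$ for suitable $0<\varepsilon^\prime<\varepsilon$, the claim reduces to $\sigma(D)\phi\,c_0(x,D)u\in\mathcal FL^p_{s,\lambda}(\mathbb R^n)$. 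Taking $\varepsilon\le\tilde\varepsilon$ and using the monotonicity of neighborhoods together with $X_{[\tilde\varepsilon\lambda]}\subset\left(X_{[\tilde\varepsilon\lambda]}\right)_{\tilde\varepsilon\lambda}$, one arranges $\mathrm{supp}_\xi\,\sigma\subset X_{[\varepsilon\lambda]}\subset\left(X_{[\tilde\varepsilon\lambda]}\right)_{\tilde\varepsilon\lambda}$, so that on $\mathrm{supp}\,\phi$ the symbol $c_0$ and all its derivatives vanish on the frequency support of $\sigma$. By the asymptotic composition formula for the classes $S^r_{\rho,\lambda}(\Omega)$, the composed symbol of $\sigma(D)\phi\,c_0(x,D)$ then lies in $S^{-\infty}$, so this operator is regularizing and maps $u$ into $\mathcal S(\mathbb R^n)\subset\mathcal FL^p_{s,\lambda}(\mathbb R^n)$, which is the required membership.

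Combining the two contributions through the decomposition $u=b(x,D)(a(x,D)u)-c(x,D)u$ gives $u\in mcl\mathcal FL^p_{s,\lambda}(x_0\times X)$, as claimed. I expect the only genuinely delicate point to be the last one, namely making quantitative the assertion that $\sigma(D)\phi\,c_0(x,D)$ is regularizing: this is where the interplay between the supports prescribed by the $[\lambda]-$ and $\lambda-$neighborhoods and the symbolic calculus must be controlled, with the bookkeeping of the sizes $\varepsilon,\varepsilon^\prime,\tilde\varepsilon$ organized by Lemma \ref{mcl_lemma1} and Corollary \ref{cor_intorni}. Everything else is a routine application of the parametrix and of the continuity statement in Proposition \ref{cont1}.
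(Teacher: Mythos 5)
Your proposal is correct and follows essentially the route the paper itself intends: the paper gives no written proof of this proposition but defers to \cite[Propositions 5.1, 5.2]{GM2.2}, and the argument there is precisely your parametrix scheme — write $u=b(x,D)\big(a(x,D)u\big)-c(x,D)u$ via Proposition \ref{mcl_parametrix}, apply the mapping property of Proposition \ref{cont1} (extended to the order $-r$ operator $b(x,D)$) to the first term, and dispose of the error term using the rapid decrease of $c(x,\xi)$ on $B_{\tilde\varepsilon}(x_0)\times\left(X_{[\tilde\varepsilon\lambda]}\right)_{\tilde\varepsilon\lambda}$ together with the characterization of Proposition \ref{char_FL}. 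The two technical points you flag — the negative-order version of Proposition \ref{cont1}, and the support bookkeeping showing that $\sigma(D)\,\phi\,c_0(x,D)$ has all terms of its composed asymptotic expansion vanishing, hence is regularizing — are genuine but standard, and are resolved exactly as you indicate.
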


It is also straightforward to show that the results of Propositions \ref{cont1}, \ref{cont2} can be restated in terms of the filters of Fourier Lebesgue singularities and characteristic filter of a symbol as follows.
\begin{proposition}\label{filter_inclusions}
Let $s\in\mathbb R$, $r>0$ be arbitrary real numbers, $a(x,D)\in\widetilde{\rm Op}\,S_{\rho, \lambda}(\Omega)$, $x_0\in\Omega$ and $p\in[1,\infty]$. Then the following inclusions are satisfied for every $u\in\mathcal{D}'(\mathbb R^n)$:
\begin{equation*}
\Xi_{[\lambda],\mathcal FL^p_{s-r,\lambda},x_0}a(x,D)u\cap\Sigma_{[\lambda],x_0}a\subset \Xi_{[\lambda],\mathcal FL^p_{s,\lambda},x_0}u\subset\Xi_{[\lambda],\mathcal FL^p_{s-r,\lambda},x_0}a(x,D)u\,.
\end{equation*}
\end{proposition}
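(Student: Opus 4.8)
The plan is to read off both inclusions directly from Propositions \ref{cont1} and \ref{cont2}, after unwinding the definitions of the two filters involved. The crucial observation is that, by Definition \ref{FL_sing}, a set $X$ belongs to $\Xi_{[\lambda],\mathcal FL^p_{s,\lambda},x_0}v$ precisely when $v\in\mathcal FL^p_{s,\lambda,{\rm mcl}}(x_0\times(\mathbb R^n\setminus X))$, and that, by \eqref{char_filter}, $X$ belongs to $\Sigma_{[\lambda],x_0}a$ precisely when $a(x,\xi)\in{\rm mce}_{r,[\lambda]}(\mathbb R^n\setminus X)(x_0)$; thus membership in either filter is a microlocal regularity (respectively ellipticity) statement on the complementary set $\mathbb R^n\setminus X$. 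The whole argument therefore consists in applying the two propositions with $\mathbb R^n\setminus X$ in place of $X$ and translating back into the filter language.

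For the right-hand inclusion, I would take $X\in\Xi_{[\lambda],\mathcal FL^p_{s,\lambda},x_0}u$, which by the above means $u\in\mathcal FL^p_{s,\lambda,{\rm mcl}}(x_0\times(\mathbb R^n\setminus X))$. Since $a(x,D)$ is a properly supported operator with symbol in $S^r_{\rho,\lambda}(\Omega)$, Proposition \ref{cont1}, applied to the set $\mathbb R^n\setminus X$, yields $a(x,D)u\in\mathcal FL^p_{s-r,\lambda,{\rm mcl}}(x_0\times(\mathbb R^n\setminus X))$; by Definition \ref{FL_sing} this is exactly the assertion $X\in\Xi_{[\lambda],\mathcal FL^p_{s-r,\lambda},x_0}a(x,D)u$.

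For the left-hand inclusion, I would take $X$ in the intersection $\Xi_{[\lambda],\mathcal FL^p_{s-r,\lambda},x_0}a(x,D)u\cap\Sigma_{[\lambda],x_0}a$. Unwinding the two memberships gives, on one hand, $a(x,D)u\in\mathcal FL^p_{s-r,\lambda,{\rm mcl}}(x_0\times(\mathbb R^n\setminus X))$ and, on the other hand, microlocal $[\lambda]$-ellipticity of $a$ in $\mathbb R^n\setminus X$ at $x_0$. These are precisely the two hypotheses of Proposition \ref{cont2} with $\mathbb R^n\setminus X$ in place of $X$, whose conclusion $u\in\mathcal FL^p_{s,\lambda,{\rm mcl}}(x_0\times(\mathbb R^n\setminus X))$ reads, again through Definition \ref{FL_sing}, as $X\in\Xi_{[\lambda],\mathcal FL^p_{s,\lambda},x_0}u$.

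There is no genuine analytical obstacle here, since the hard work has already been carried out in Propositions \ref{cont1} and \ref{cont2}: the statement is a pure reformulation. The only point demanding care is the bookkeeping with complements, in that one must consistently apply the propositions to $\mathbb R^n\setminus X$ rather than to $X$, together with checking that the objects produced are indeed sets in the respective $[\lambda]$-filters, which is guaranteed by the filter properties recorded after Definition \ref{FL_sing} and after \eqref{char_filter}. Thus the proof reduces to the two short translations above.
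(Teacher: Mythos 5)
Your proposal is correct and is exactly the argument the paper intends: the paper states Proposition \ref{filter_inclusions} as a ``straightforward'' restatement of Propositions \ref{cont1} and \ref{cont2} in filter language, which is precisely your translation via Definition \ref{FL_sing} and \eqref{char_filter} applied to $\mathbb R^n\setminus X$. The complement bookkeeping you carry out is the only content of the proof, and you handle it correctly.
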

\subsection{Semilinear equations}\label{semi-linear_sct}
Gathering the results collected in the preceding Sections \ref{w_FL_pdo_sct} and \ref{mcl_reg_sct}, we prove here a result of microlocal regularity in Fourier Lebesgue spaces for solutions to semilinear partial differential equations of type \eqref{semilin_eqt} already considered in Section \ref{loc_reg_sct}. Throughout this Section, we assume that $a(x,D)$, $b_i(x,D)$ for $1\le i\le M$ in \eqref{semilin_eqt} are properly supported operators with symbols $S^r_\lambda(V_{x_0})$ and $S^{r-\varepsilon}_\lambda(V_{x_0})$ on some open bounded neighborhood $V_{x_0}$ of a point $x_0$, where as in Section \ref{w_FL_pdo_sct} $0<\varepsilon<r$ are given, and the nonlinear function $F=F(x,\zeta)$ depending as a $C^\infty-$function on its first argument of $x\in V_{x_0}$ and entire analytic on its second argument $\zeta=(\zeta_i)_{1\le i\le M}\in\mathbb C^M$, satisfying the same requirement made in Section \ref{algebra_sct} (see also Remark \ref{rmk:14}).

The following result was originally proved in \cite{G}.
\begin{theorem}\label{semilin_thm}
For $0<\varepsilon<r$ and $1\le p\le +\infty$ given as above, let $\tau$, $\tilde t$, $s$ be positive real numbers such that
\begin{equation}\label{condizioni_esponenti}
\tau+r-\varepsilon\le\tilde t<s
\end{equation}
and $\lambda^{-\tau}\in L^q(\mathbb R^n)$, where $q$ is the conjugate exponent of $p$. As for the semilinear equation \eqref{semilin_eqt}, let us assume that the pseudodifferential operator $a(x,D)$ is $[\lambda]-$microlocally elliptic in $X\subset\mathbb R^n$ at the point $x_0$ and the source term $f=f(x)$ belongs to $\mathcal FL^p_{s-r,\lambda,{\rm mcl}}(x_0\times X)$. Then every solution $u\in\mathcal FL^p_{\tilde t,\lambda,{\rm loc}}(x_0)$ to the equation \eqref{semilin_eqt} with source term $f$ also satisfies
\begin{equation}\label{reg_finale}
u\in\mathcal FL^p_{t,\lambda,{\rm mcl}}(x_0\times X)\,,\quad\mbox{for all}\,\, t\le\min\left\{s,\tilde t+\left(E\left(\frac{\tilde t-r-\tau}{\varepsilon}\right)+2\right)\varepsilon\right\}\,,
\end{equation}
where $E(\theta)$ is the greatest integer less than or equal to $\theta\in\mathbb R$.
\end{theorem}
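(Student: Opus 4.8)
The plan is to establish \eqref{reg_finale} by a finite bootstrap — the microlocal counterpart of the local iteration carried out in Proposition \ref{reg_prop1} — that raises the microlocal order of $u$ in $X$ by $\varepsilon$ at each step, exploiting the microlocal $[\lambda]$-ellipticity of $a(x,D)$ in $X$ at $x_0$, and that terminates once the order is stopped either by the regularity $s-r$ of the forcing term or by the admissibility of the nonlinear superposition. Throughout, the \emph{local} order of $u$ at $x_0$ stays fixed at $\tilde t$, since $a(x,D)$ is only microlocally (not everywhere) elliptic; it is solely the microlocal order in $X$ that I will push upward. As a starting point I use that local regularity entails microlocal regularity in any set, so that $u\in\mathcal FL^p_{\tilde t,\lambda,{\rm loc}}(x_0)$ gives $u\in\mathcal FL^p_{\tilde t,\lambda,{\rm mcl}}(x_0\times X)$; I set $t_0:=\tilde t$ and $\theta:=\frac{\tilde t-r-\tau}{\varepsilon}$, so that the admissibility hypothesis $\tau+r-\varepsilon\le\tilde t$ is precisely $\theta\ge -1$.

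The inductive step I would run is the following. Assuming $u\in\mathcal FL^p_{t_k,\lambda,{\rm mcl}}(x_0\times X)$ with $t_k=\min\{s,\tilde t+k\varepsilon\}$, I first apply each $b_i(x,D)$, whose symbol lies in $S^{r-\varepsilon}_{\lambda}(V_{x_0})$: Proposition \ref{continuity3} (local continuity) and Proposition \ref{cont1} (microlocal continuity) together give
\begin{equation*}
b_i(x,D)u\in\mathcal FL^p_{\tilde t-r+\varepsilon,\lambda,{\rm loc}}(x_0)\cap\mathcal FL^p_{t_k-r+\varepsilon,\lambda,{\rm mcl}}(x_0\times X)\,,\qquad 1\le i\le M\,.
\end{equation*}
Next I feed these into $F$ via the composition statement of Remark \ref{rmk:14} (the microlocal analogue of Corollary \ref{composition_prop}), obtaining $F(x,(b_i(x,D)u)_i)$ in the \emph{same} intersection space, hence of microlocal order $t_k-r+\varepsilon$ in $X$. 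Finally I rewrite \eqref{semilin_eqt} as $a(x,D)u=f-F(x,(b_i(x,D)u)_i)$; since $f\in\mathcal FL^p_{s-r,\lambda,{\rm mcl}}(x_0\times X)$, the right-hand side belongs to $\mathcal FL^p_{\min\{s-r,\,t_k-r+\varepsilon\},\lambda,{\rm mcl}}(x_0\times X)$, and the microlocal elliptic regularity of Proposition \ref{cont2} gains $r$ orders, producing $u\in\mathcal FL^p_{t_{k+1},\lambda,{\rm mcl}}(x_0\times X)$ with $t_{k+1}=\min\{s,\tilde t+(k+1)\varepsilon\}$.

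The step whose admissibility I must watch is the application of $F$: invoking Remark \ref{rmk:14} at stage $k$ needs the hypotheses of Theorem \ref{continuity2} with local weight $\lambda^{\tilde t-r+\varepsilon}$, microlocal weight $\lambda^{t_k-r+\varepsilon}$ and auxiliary weight $\sigma=\lambda^{\tau}$, so that $1/\sigma=\lambda^{-\tau}\in L^q$. The subadditivity of $\lambda$ passes to all its positive powers and, combined with $\lambda^{-\tau}\in L^q$ and Proposition \ref{wf_relationship}.{\it ii}, yields condition \eqref{B_q} for $\lambda^{\tilde t-r+\varepsilon}$ and conditions ($\mathcal{SA}$), ($\mathcal{SH}$) for $\lambda^{t_k-r+\varepsilon}$, just as in the closing lines of Remark \ref{rmk:14}; these I regard as routine. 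The genuinely binding requirement is the chain \eqref{cndt:2}, namely
\begin{equation*}
\lambda^{\tau}\preceq\lambda^{\tilde t-r+\varepsilon}\preceq\lambda^{t_k-r+\varepsilon}\preceq\lambda^{2(\tilde t-r+\varepsilon)-\tau}\,,
\end{equation*}
whose first inequality is the admissibility hypothesis, whose middle inequality $\tilde t\le t_k$ is automatic, and whose last inequality reduces to $t_k-r+\varepsilon\le 2(\tilde t-r+\varepsilon)-\tau$, that is $k\le\theta+1$.

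I conclude that the inductive step is licit exactly for $k=0,1,\dots,K$ with $K:=E(\theta+1)=E(\theta)+1$, so that the final iteration yields $u\in\mathcal FL^p_{t_{K+1},\lambda,{\rm mcl}}(x_0\times X)$ with $t_{K+1}=\min\{s,\tilde t+(K+1)\varepsilon\}=\min\{s,\tilde t+(E(\theta)+2)\varepsilon\}$, which is \eqref{reg_finale}. I expect the main obstacle to be precisely this admissibility of the nonlinear composition: the linear mapping of Proposition \ref{cont1} and the elliptic regularity of Proposition \ref{cont2} impose no ceiling on the gain, but the superposition through $F$ forces the upper bound in \eqref{cndt:2}, and it is this bound — controlled by the integrability exponent $\tau$ via $\lambda^{-\tau}\in L^q$ — that caps the total gain at $(E(\theta)+2)\varepsilon$ and makes the bootstrap terminate after finitely many steps.
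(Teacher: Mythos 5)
Your proof is correct and follows essentially the same route as the paper's: the same bootstrap that keeps the local order fixed at $\tilde t$, raises the microlocal order in $X$ by $\varepsilon$ per step using Propositions \ref{continuity3} and \ref{cont1} for the $b_i$, the composition result of Theorem \ref{continuity2} via Remark \ref{rmk:14} with $\sigma=\lambda^{\tau}$, and the microlocal elliptic regularity of Proposition \ref{cont2}, with the upper bound $\Lambda\preceq\lambda^2/\sigma$ in \eqref{cndt:2} capping the admissible steps exactly as in the paper. Your induction on $t_k=\min\{s,\tilde t+k\varepsilon\}$ is just a tidier packaging of the paper's unrolled iteration, and your step count $K+1=E\bigl(\frac{\tilde t-r-\tau}{\varepsilon}\bigr)+2$ coincides with the paper's.
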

\begin{proof}
The proof relies on a bootstrapping argument similar to the one used to prove Proposition \ref{reg_prop1}. So let $u\in\mathcal FL^p_{\tilde t,\lambda,{\rm loc}}(x_0)$ be a solution to equation \eqref{semilin_eqt}. From Propositions \ref{continuity3} we get
\begin{equation}\label{bi}
b_i(x,D)u\in \mathcal FL^p_{\tilde t-r+\varepsilon,\lambda,{\rm loc}}(x_0)\,,\quad i=1,\dots,M\,.
\end{equation}
In view of the assumptions \eqref{condizioni_esponenti}, $\lambda^{-\tau}\in L^q(\mathbb R^n)$ and the sub-additivity of $\lambda$ we may apply the result of Theorem \ref{continuity2} and its consequences stated in Remark \ref{rmk:14}; with reference to the statement of that theorem, here $\lambda^{\tau}$ plays the role of the weight functions $\sigma$ whereas $\lambda^{\tilde t-r+\varepsilon}$ plays the role of both the weight functions $\lambda$, $\Lambda\,\,\,$\footnote{Notice in particular that from the sub-additivity of $\lambda$ and $\lambda^{\tau}\preceq\lambda^{\tilde t-r+\varepsilon}$ (following from \eqref{condizioni_esponenti}), we derive that $\lambda^{\tilde t-r+\varepsilon}$ satisfies condition \eqref{B_q} with the conjugate exponent of $p$ (that is required to apply Theorem \ref{continuity2}.}. Thus it follows from \eqref{bi} that
\begin{equation}\label{Fbi}
F(x,b_i(x,D)u)_{1\le i\le M}\in \mathcal FL^p_{\tilde t-r+\varepsilon,\lambda,{\rm loc}}(x_0)\,.
\end{equation}
If $s\le\tilde t+\varepsilon$, we derive from \eqref{semilin_eqt} that $a(x,D)u\in\mathcal FL^p_{s-r,\lambda,{\rm mcl}}(x_0\times X)$ hence $u\in\mathcal FL^p_{s,\lambda,{\rm mcl}}(x_0\times X)$ in view of the $[\lambda]-$microellipticity of $a(x,D)$ in $X$ at point $x_0$ and Proposition \ref{cont2} (notice that $\tilde t+\varepsilon\le 2\tilde t-r-\tau+2\varepsilon$ under the assumption \eqref{condizioni_esponenti} then $s\le \tilde t+\varepsilon$ implies $s=\min\{s,2\tilde t-r-\tau+2\varepsilon\}$); if on the contrary $s>\tilde t+\varepsilon$ again from \eqref{semilin_eqt} we derive that $a(x,D)u\in\mathcal FL^p_{\tilde t-r+\varepsilon,\lambda,{\rm mcl}}(x_0\times X)$ hence $u\in\mathcal FL^p_{\tilde t+\varepsilon,\lambda,{\rm mcl}}(x_0\times X)$ by the same arguments as before. In this latter case, using once again Propositions \ref{continuity3} and \ref{cont1} we get
\begin{equation*}
b_i(x,D)u\in \mathcal FL^p_{\tilde t-r+\varepsilon,\lambda,{\rm loc}}(x_0)\cap \mathcal FL^p_{\tilde t-r+2\varepsilon,\lambda,{\rm mcl}}(x_0\times X)\,,\quad 1\le i\le M\,.
\end{equation*}
Now we would like to apply Theorem \ref{continuity2} where the role of the weight functions $\sigma$, $\lambda$ and $\Lambda$ is covered respectively by $\lambda^\tau$, $\lambda^{\tilde t-r+\varepsilon}$ and $\lambda^{\tilde t-r+2\varepsilon}$; the only assumption to check is $\Lambda\preceq\lambda^2/\sigma$ which amounts to have that $\tilde t\ge \tau+r$ (cf. \eqref{cndt:2}). If this is the case, Theorem \ref{continuity2} applies to find
\begin{equation*}
F(x,b_i(x,D)u)\in \mathcal FL^p_{\tilde t-r+\varepsilon,\lambda,{\rm loc}}(x_0)\cap \mathcal FL^p_{\tilde t-r+2\varepsilon,\lambda,{\rm mcl}}(x_0\times X)
\end{equation*}
hence from \eqref{semilin_eqt} and Proposition \ref{cont2}
\begin{equation*}
a(x,D)u\in\mathcal FL^p_{s-r,\lambda,{\rm mcl}}(x_0\times X)\,\,\Rightarrow\,\,u\in\mathcal FL^p_{s,\lambda,{\rm mcl}}(x_0\times X) \,,\quad\mbox{if}\,\,s\le\tilde t+2\varepsilon
\end{equation*}
or
\begin{equation*}
a(x,D)u\in\mathcal FL^p_{\tilde t-r+2\varepsilon,\lambda,{\rm mcl}}(x_0\times X)\,\,\Rightarrow\,\,u\in\mathcal FL^p_{\tilde t+2\varepsilon,\lambda,{\rm mcl}}(x_0\times X) \,,\quad\mbox{otherwise}.
\end{equation*}
In the latter case
\begin{equation*}
b_i(x,D)u\in \mathcal FL^p_{\tilde t-r+\varepsilon,\lambda,{\rm loc}}(x_0)\cap \mathcal FL^p_{\tilde t-r+3\varepsilon,\lambda,{\rm mcl}}(x_0\times X)\,,\quad 1\le i\le M
\end{equation*}
and, provided that $\tilde t\ge\tau+r+\varepsilon$, we are still in the position to apply Theorem \ref{continuity2} where $\lambda^{\tau}$ and $\lambda^{\tilde t-r+\varepsilon}$ play again the role of $\sigma$ and $\lambda$, while $\lambda^{\tilde t-r+3\varepsilon}$ plays the role of $\Lambda$. For $\tilde t\ge\tau+r+\varepsilon$ Theorem \ref{continuity2} yields
\begin{equation*}
F(x,b_i(x,D)u)\in \mathcal FL^p_{\tilde t-r+\varepsilon,\lambda,{\rm loc}}(x_0)\cap \mathcal FL^p_{\tilde t-r+3\varepsilon,\lambda,{\rm mcl}}(x_0\times X)
\end{equation*}
and again
\begin{equation*}
a(x,D)u\in\mathcal FL^p_{s-r,\lambda,{\rm mcl}}(x_0\times X)\,\,\Rightarrow\,\,u\in\mathcal FL^p_{s,\lambda,{\rm mcl}}(x_0\times X) \,,\quad\mbox{if}\,\,s\le\tilde t+3\varepsilon
\end{equation*}
or
\begin{equation*}
a(x,D)u\in\mathcal FL^p_{\tilde t-r+3\varepsilon,\lambda,{\rm mcl}}(x_0\times X)\,\,\Rightarrow\,\,u\in\mathcal FL^p_{\tilde t+3\varepsilon,\lambda,{\rm mcl}}(x_0\times X) \,,\quad\mbox{otherwise}.
\end{equation*}
By an iteration of the above procedure we find that if the integer $j\ge 0$ is such that
\begin{equation}\label{condizione_j}
\tilde t<\tau+r+j\varepsilon
\end{equation}
then
\begin{equation*}
u\in\mathcal FL^p_{\min\{\tilde t+(j+1)\varepsilon,s\},\lambda,{\rm mcl}}(x_0\times X)\,.
\end{equation*}
The smallest nonnegative integer $j$ satisfying \eqref{condizione_j} is $\tilde j=E\left(\frac{\tilde t-r-\tau}{\varepsilon}\right)+1$ (from \eqref{condizioni_esponenti} $E\left(\frac{\tilde t-\tau-r}{\varepsilon}\right)\ge -1$ follows), hence $\tilde t+(\tilde j+1)\varepsilon=\tilde t+\left(E\left(\frac{\tilde t-r-\tau}{\varepsilon}\right)+2\right)\varepsilon$. This gives
\begin{equation*}
u\in\mathcal FL^p_{\min\left\{\tilde t+\left(E\left(\frac{\tilde t-r-\tau}{\varepsilon}\right)+2\right)\varepsilon,s\right\},\lambda,{\rm mcl}}(x_0\times X)\,,
\end{equation*}
which completes the proof. 
\end{proof}
In terms of the filter of Fourier Lebesgue singularities and characteristic filter of a symbol, the result of Theorem \ref{semilin_thm} can be restated as follows: for every solution $u\in\mathcal FL^p_{\tilde t,\lambda,{\rm loc}}(x_0)$ to equation \eqref{semilin_eqt} one has
\begin{equation*}
\Xi_{[\lambda],\mathcal FL^p_{s-r,\lambda},x_0}f\cap\Sigma_{[\lambda],x_0}a\subset \Xi_{[\lambda],\mathcal FL^p_{t,\lambda},x_0}u\,,
\end{equation*}
for all $t\le\min\left\{s,\tilde t+\left(E\left(\frac{\tilde t-r-\tau}{\varepsilon}\right)+2\right)\varepsilon\right\}$.
\begin{remark}\label{rmk:18}
{\rm Because of the lower estimate of condition ($\mathcal{PG}$), a sufficient condition for $\lambda^{-\tau}\in L^q(\mathbb R^n)$
is $\tau>\frac{n}{\nu q}$.}
\end{remark}

\subsection{The case of quasi-homogeneous equations}\label{q_hom_sct}
In this section, we deal with pseudodifferential operators whose smooth symbols are associated to a quasi-homogeneous weight as defined in the Example 2 of Section \ref{wf_sct}. We recall that for $M=(m_1,\dots,m_n)\in\mathbb N^n$, with $m_\ast:=\min\limits_{1\le j\le n}m_j\ge 1$, the {\it quasi-homogeneous weight} is defined as
\begin{equation}\label{q_hom_w_n}
\langle\xi\rangle_M:=\left(1+\sum\limits_{j=1}^n\xi_j^{2m_j}\right)^{1/2}\,.
\end{equation}
Throughout the rest of this Section, we will make use of the following notations. We set  $m^\ast:=\max\limits_{1\le j\le n}m_j$, $\frac 1M:=\left( \frac{1}{m_1},\dots, \frac{1}{m_n}\right)$ and defined the {\it quasi-homogeneous norm} as
\begin{equation}\label{q_hom_norm}
\vert\xi\vert^2_M:=\sum\limits_{j=1}^n\xi_j^{2m_j}\,.
\end{equation}
Clearly the usual Euclidean norm $\vert\xi\vert$ corresponds to the quasi-homogeneous norm in the case of $M=(1,\dots,1)$. For every $\alpha\in\mathbb Z^n_+$, $
\xi\in\mathbb R^n$ and $t>0$ we also set $\langle\alpha,\frac1{M}\rangle:=\sum\limits_{j=1}^n\frac{\alpha_j}{m_j}$ and $t^{1/M}\xi:=(t^{1/m_1}\xi_1,\dots, t^{1/m_n}\xi_n)$. It is worth to notice that, in spite of the terminology, the quasi-homogeneous norm $\vert\cdot\vert_M$ is not a norm; instead of the homogeneity and the triangle inequality, required for norms, the  quasi-homogeneous norm enjoys the following properties:
\begin{itemize}
\item[(i)] {\it Quasi-Homogeneity}: for all $t>0$, $\xi\in\mathbb{R}^n$
\begin{equation*}
\vert t^{ 1/M}\xi\vert_M =t\vert\xi\vert_M\,;
\end{equation*}
\item[(ii)] {\it Sub-additivity}: a constant $C\ge 1$ depending only on $M$ exists such that
\begin{equation*}
\vert\xi+\eta\vert_M\leq C(\vert \xi\vert_M+\vert \eta \vert_M)\,,\qquad \forall\,\xi,\eta\in\mathbb R^n\,.
\end{equation*}
\end{itemize}
For $R>0$ and $x_0\in\mathbb R^n$, the {\it $M-$open ball} centered at $x_0$ with radius $R$ is defined to be the set
\begin{equation*}
B_M(x_0;R);=\left\{x\in\mathbb R^n\,:\,\,\vert x-x_0\vert_M<R\right\}\,.
\end{equation*}
The set
\begin{equation}\label{M-unit-ball}
\mathbb S_M:=\left\{x\in\mathbb R^n\,:\,\,\vert x\vert_M=1\right\}
\end{equation}
is the {\it unit $M-$sphere} (centered at the origin). For further details and properties of quasi-homogeneous norm and weight, we address the reader to \cite{GMqh1}, \cite{GMqh2}.

According to the behavior of the weight \eqref{q_hom_w_n} expressed by the estimates \eqref{q_hom_der}, we introduce suitable classes of smooth symbols displaying a decaying behavior of {\it quasi-homogeneous} type.


\begin{definition}\label{def:1}
Given $r\in\mathbb{R}$, $S^r_{M}$ will be the class of functions $a(x,\xi)\in
C^{\infty}(\mathbb{R}^{2n})$ such that for all multi-indices $\alpha,\beta\in\mathbb{Z}^n_+$ there exists $C_{\alpha,\beta}>0$ such that:
\begin{equation}\label{eqh:1}
\vert \partial^{\beta}_x\partial^{\alpha}_{\xi}a(x,\xi)\vert\le
C_{\alpha,\beta}\langle\xi\rangle_M^{r-\langle\alpha,\frac1{M}\rangle},\quad\forall x,\,\xi\in\mathbb{R}^n\,.
\end{equation}
If $\Omega$ is an arbitrary open subset of $\mathbb R^n$, we denote by $S^r_M(\Omega)$ the local class of functions $a(x,\xi)\in C^\infty(\Omega\times\mathbb R^n)$ such that $\phi(x)a(x,\xi)\in S^r_M$ for all $\phi\in C^\infty_0(\Omega)$.
\end{definition}

\medskip
Due to the underlying quasi-homogeneous structure, in the present framework the whole theory of propagation of singularities can be based upon a suitable notion of ``conical'' set in frequency space adapted to this structure.

Let us recall below some basic notions, see \cite{GMqh2} for more details. Later on it is set for short $T^{\circ}\mathbb{R}^n:=\mathbb R^n\times(\mathbb R^n\setminus\{0\})$.
\begin{definition}\label{Mcone}
We say that a set $\Gamma\subset\mathbb{R}^n\setminus\{0\}$ is an $M-$cone (or is $M-$conic), if
\begin{equation*}
\xi\in\Gamma\quad\Rightarrow\quad t^{1/M}\xi\in\Gamma\,\,,\,\,\forall\,t>0\,.
\end{equation*}
\end{definition}

For $\eta\in\mathbb R^n$ and $R>0$ the set
\begin{equation}\label{M-cone}
\Gamma_M(\eta;R):=\left\{t^{1/M}\xi\,:\,\,\xi\in B_M(\eta;R)\,,\,\,t>0\right\}\cap(\mathbb R^n\setminus\{0\})
\end{equation}
is $M-$conic; it is called the {\it $M-$cone generated by $B_M(\eta;R)$}.

\medskip
Since \eqref{q_hom_w_n} also belongs to the class of weight functions considered in Sections \ref{loc_reg_sct}, \ref{mcl_reg_sct}, the results considered there, based upon the notion of $[\lambda]-$filter, could be applied to the quasi-homogeneous setting (that is $\lambda(\xi)=\langle\xi\rangle_M$).
The next results of this Section will provide some evidences that these two alternative approaches are essentially equivalent.
\begin{proposition}\label{equivalence_prop}
There exist constants $\hat c>0$ and $\hat\varepsilon>0$ sufficiently small such that for all $0<\varepsilon\le\hat\varepsilon$ another $0<\varepsilon^\prime<\varepsilon$ exists such that for all $M-$conic set $X\subset\mathbb R^n\setminus\{0\}$
\begin{equation}\label{equiv:1}
X_{[\varepsilon^\prime\langle\cdot\rangle_M]}\subset\bigcup\limits_{\eta\in X\cap\mathbb S_M}\Gamma_M(\eta;\varepsilon)\cap\left\{\langle\xi\rangle_M>\hat c/\varepsilon\right\}
\end{equation}
and, conversely,
\begin{equation}\label{equiv:2}
\bigcup\limits_{\eta\in X\cap\mathbb S_M}\Gamma_M(\eta;\varepsilon^\prime)\cap\left\{\langle\xi\rangle_M>\hat c/\varepsilon^\prime\right\}\subset X_{[\varepsilon\langle\cdot\rangle_M]}\,.
\end{equation}
\end{proposition}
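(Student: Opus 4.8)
The plan is to work in quasi-homogeneous polar coordinates and to exploit the fact that, whereas the weight $\langle\cdot\rangle_M$ only obeys $\langle\xi\rangle_M^2=1+\vert\xi\vert_M^2$, the quasi-homogeneous norm $\vert\cdot\vert_M$ scales \emph{exactly}: $\vert t^{1/M}\xi\vert_M=t\vert\xi\vert_M$ by property (i). The first reduction I would record is the reformulation of membership in an $M$-cone: writing $t^{-1/M}\xi:=(t^{-1/m_1}\xi_1,\dots,t^{-1/m_n}\xi_n)$, one has from Definition \ref{M-cone} that $\xi\in\Gamma_M(\eta;R)$ if and only if there exists $t>0$ with $\vert t^{-1/M}\xi-\eta\vert_M<R$. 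The strategy is then that both inclusions \eqref{equiv:1} and \eqref{equiv:2} reduce, via this reformulation, to the single clean identity obtained by rescaling a point of $X$ to the appropriate size, and that the only genuine work is converting the resulting $\vert\cdot\vert_M$-estimates into $\langle\cdot\rangle_M$-estimates, the discrepancy being absorbed by the largeness conditions $\langle\xi\rangle_M>\hat c/\varepsilon$.

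For \eqref{equiv:2} I would fix $\xi$ in the left-hand side, so that $\langle\xi\rangle_M>\hat c/\varepsilon^\prime$ and, for some $\eta\in X\cap\mathbb S_M$ and $t>0$, the point $\zeta:=t^{-1/M}\xi$ satisfies $\vert\zeta-\eta\vert_M<\varepsilon^\prime$. Setting $v:=\vert\xi\vert_M/\vert\zeta\vert_M$ and $\xi_0:=v^{1/M}\eta$, the $M$-conicity of $X$ gives $\xi_0\in X$, while property (i) yields the exact relations $\vert\xi_0\vert_M=v$ and $\vert\xi-\xi_0\vert_M=v\,\vert\zeta-\eta\vert_M<\varepsilon^\prime v=\varepsilon^\prime\vert\xi_0\vert_M$. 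Using the sub-additivity (ii) one checks $\vert\zeta\vert_M\in[1/(2C),2C]$ for $\varepsilon^\prime$ small, whence $v\ge\vert\xi\vert_M/(2C)$ is large because $\langle\xi\rangle_M>\hat c/\varepsilon^\prime$. It then remains to verify $1+\varepsilon^{\prime\,2}v^2=\langle\xi-\xi_0\rangle_M^2<\varepsilon^2(1+v^2)=\varepsilon^2\langle\xi_0\rangle_M^2$, i.e. $1-\varepsilon^2<(\varepsilon^2-\varepsilon^{\prime\,2})v^2$, which holds once $\varepsilon^\prime<\varepsilon/\sqrt2$ and $v$ is large enough; since $v^2\gtrsim(\hat c/\varepsilon^\prime)^2\to\infty$ as $\varepsilon^\prime\to0$ while the threshold stays bounded, shrinking $\varepsilon^\prime$ (depending on $\varepsilon$, $\hat c$, $C$) gives $\xi\in X_{[\varepsilon\langle\cdot\rangle_M]}$.

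For \eqref{equiv:1} I would fix $\xi\in X_{[\varepsilon^\prime\langle\cdot\rangle_M]}$, so that some $\xi_0\in X$ satisfies $\langle\xi-\xi_0\rangle_M<\varepsilon^\prime\langle\xi_0\rangle_M$. Since $\langle\cdot\rangle_M\ge1$ this forces $\langle\xi_0\rangle_M>1/\varepsilon^\prime$, hence $\vert\xi_0\vert_M$ is large and $\langle\xi_0\rangle_M\le\sqrt2\,\vert\xi_0\vert_M$, giving $\vert\xi-\xi_0\vert_M\le\langle\xi-\xi_0\rangle_M<\sqrt2\,\varepsilon^\prime\vert\xi_0\vert_M$. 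Put $w:=\vert\xi_0\vert_M$ and $\eta:=w^{-1/M}\xi_0$; then $\eta\in\mathbb S_M$ and $\eta\in X$ by $M$-conicity. Choosing $t=w$ in the reformulation gives $\vert t^{-1/M}\xi-\eta\vert_M=w^{-1}\vert\xi-\xi_0\vert_M<\sqrt2\,\varepsilon^\prime<\varepsilon$ provided $\varepsilon^\prime<\varepsilon/\sqrt2$, so $\xi\in\Gamma_M(\eta;\varepsilon)$. Finally Lemma \ref{mcl_lemma1} applied with $\omega=\langle\cdot\rangle_M$ yields $\langle\xi\rangle_M>\hat c/\varepsilon^\prime>\hat c/\varepsilon$ (this is where the constant $\hat c$ and the bound $\varepsilon^\prime\le\hat\varepsilon$ of the statement come from), completing \eqref{equiv:1}.

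The constants are organised by taking $\hat c$ and $\hat\varepsilon$ from Lemma \ref{mcl_lemma1} and, for each $\varepsilon\le\hat\varepsilon$, choosing the single $\varepsilon^\prime$ to be the minimum of the thresholds arising in the two paragraphs above, so that both inclusions hold simultaneously. The main obstacle is precisely this bookkeeping: the exact scaling lives on $\vert\cdot\vert_M$ but the neighborhoods are defined through $\langle\cdot\rangle_M$, and the ``$+1$'' in $\langle\xi\rangle_M^2=1+\vert\xi\vert_M^2$ destroys homogeneity near the origin. The whole point of the largeness conditions (and of the freedom to pass to a smaller $\varepsilon^\prime$) is to push the relevant frequencies far enough out that this term becomes negligible; verifying that the chosen $\varepsilon^\prime$ indeed makes the inequalities $1-\varepsilon^2<(\varepsilon^2-\varepsilon^{\prime\,2})v^2$ and $\sqrt2\,\varepsilon^\prime<\varepsilon$ compatible with the available lower bounds on $v$ and $w$ is the only delicate part, everything else being an application of properties (i), (ii) and Lemma \ref{mcl_lemma1}.
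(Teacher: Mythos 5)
Your proposal is correct and follows essentially the same route as the paper's own proof: both directions rescale to the unit $M$-sphere via the exact quasi-homogeneity of $\vert\cdot\vert_M$, convert between $\vert\cdot\vert_M$- and $\langle\cdot\rangle_M$-inequalities by letting the largeness conditions absorb the additive constant in $\langle\xi\rangle_M^2=1+\vert\xi\vert_M^2$, and use Lemma \ref{mcl_lemma1} together with sub-additivity for the lower bounds on the frequencies. The only differences are cosmetic (e.g.\ you square and demand $1-\varepsilon^2<(\varepsilon^2-\varepsilon^{\prime\,2})v^2$ where the paper instead adds $\hat c/(2C)$ to both sides of the $M$-norm estimate), so this is the same argument.
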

\begin{proof}
For a $M-$conic set $X$ and an arbitrary $\tilde\varepsilon>0$, take $\xi\in X_{[\tilde\varepsilon\langle\cdot\rangle_M]}$ and let $\eta\in X$ such that
\begin{equation}\label{equiv_eq:1}
\langle\xi-\eta\rangle_M<\tilde\varepsilon\langle\eta\rangle_M\,.
\end{equation}
Making use of the trivial inequalities
\begin{equation*}
1/\sqrt 2(1+\vert\zeta\vert_M)\le\langle\zeta\rangle_M\le 1+\vert\zeta\vert_M\,,\qquad\forall\,\zeta\in\mathbb R^n\,,
\end{equation*}
\eqref{equiv_eq:1} implies
\begin{equation*}
1+\vert\xi-\eta\vert_M<\sqrt 2\tilde\varepsilon (1+\vert\eta\vert_M)\,,
\end{equation*}
hence
\begin{equation}\label{equiv_eq:2}
\vert\xi-\eta\vert_M<\sqrt 2\tilde\varepsilon\vert\eta\vert_M
\end{equation}
provided that $0<\tilde\varepsilon<1/\sqrt 2$. Since in particular \eqref{equiv_eq:1} implies $\eta\neq 0$, because of the $M-$homogeneity of the $M-$norm, condition \eqref{equiv_eq:2} can be reformulated as follows
\begin{equation*}
\begin{split}
\vert\xi-\eta\vert_M &<\sqrt 2\tilde\varepsilon\vert\eta\vert_M\quad\Leftrightarrow\quad \vert\vert\eta\vert_M^{1/M}(\zeta-\tilde\eta)\vert_M=\vert\eta\vert_M\vert\zeta-\tilde\eta\vert_M<\sqrt 2\tilde\varepsilon\vert\eta\vert_M\\
&\Leftrightarrow\quad\vert\zeta-\tilde\eta\vert_M<\sqrt 2\tilde\varepsilon\,,
\end{split}
\end{equation*}
where $\zeta:=\vert\eta\vert_M^{-1/M}\xi$ and $\tilde\eta:=\vert\eta\vert_M^{-1/M}\eta\in\mathbb S_M\cap X$ because $X$ is $M-$conic. The last inequality above means that $\zeta$ belongs to the $M-$open ball centered at $\tilde\eta$ with radius $\sqrt 2\tilde\varepsilon$, thus $\xi=\vert\eta\vert_M^{1/M}\zeta\in\Gamma_M(\tilde\eta;\sqrt 2\tilde\varepsilon)$ cf. \eqref{M-cone}. Since in view of Lemma \ref{mcl_lemma1}, $\eta\in X_{[\tilde\varepsilon\langle\cdot\rangle_M]}$ also implies
\begin{equation*}
\langle\eta\rangle_M>\hat c/\tilde\varepsilon\,,
\end{equation*}
for suitable $\hat c>0$ independent of $X$ and $\tilde\varepsilon>0$, the inclusion \eqref{equiv_eq:1} follows taking $\hat{\varepsilon}=\frac{1}{\sqrt 2 C}$ and choosing for each $0<\varepsilon\le\hat\varepsilon$, $0<\varepsilon^\prime\le\varepsilon/\sqrt 2$.

Conversely, let $\xi\in \bigcup\limits_{\eta\in X\cap\mathbb S_M}\Gamma_M(\eta;\tilde\varepsilon)\cap\left\{\langle\xi\rangle_M>\hat c/\tilde\varepsilon\right\}$, where $\tilde\varepsilon>0$ is still arbitrary and is chosen sufficiently small such that
\begin{equation*}
1+\vert\xi\vert_M\ge\langle\xi\rangle_M>\hat c/\tilde\varepsilon\quad\Rightarrow\quad\vert\xi\vert_M>\hat c/\tilde\varepsilon-1\ge\hat c/(2\tilde\varepsilon)\,,
\end{equation*}
that is $0<\tilde\varepsilon\le\hat c/2$. By definition (see \eqref{M-cone}) there exist $t>0$ and $\tilde\eta\in X\cap\mathbb S_M$ such that
\begin{equation*}
\xi=t^{1/M}\zeta\,\,\,\,\,\mbox{for some}\,\,\zeta\in B_M(\tilde\eta;\tilde\varepsilon)\,.
\end{equation*}
Therefore, in view of the $M-$homogeneity, we get
\begin{equation*}
\vert\xi-t^{1/M}\tilde\eta\vert_M=\vert t^{1/M}\zeta-t^{1/M}\tilde\eta\vert_M=t\vert \zeta-\tilde\eta\vert_M<\tilde\varepsilon t=\tilde\varepsilon t\vert\tilde\eta\vert_M=\tilde\varepsilon\vert t^{1/M}\tilde\eta\vert_M\,.
\end{equation*}
Since $X$ is $M-$conic, $\eta:=t^{1/M}\tilde\eta\in X$; hence with such an $\eta\in X$ we have
\begin{equation}\label{equiv_eq:3}
\vert\xi-\eta\vert_M<\tilde\varepsilon\vert\eta\vert_M\,.
\end{equation}
On the other hand, from the sub-additivity (ii), $\vert\xi\vert_M>\hat c/(2\tilde\varepsilon)$ and \eqref{equiv_eq:3} we derive
\begin{equation}\label{equiv_eq:4}
\vert\eta\vert_M\ge\frac1 C\vert\xi\vert_M-\vert\xi-\eta\vert_M\ge \left(\frac1C-\tilde\varepsilon\right)\vert\xi\vert_M\ge\frac1{2C}\vert\xi\vert_M\ge\frac{\hat c}{2C\tilde\varepsilon}\,,
\end{equation}
provided that $0<\tilde\varepsilon\le\frac1{2C}$. Summing up \eqref{equiv_eq:3}, \eqref{equiv_eq:4} then gives
\begin{equation*}
\frac{\hat c}{2C}+\vert\xi-\eta\vert_M<2\tilde\varepsilon\vert\eta\vert_M<2\tilde\varepsilon\langle\eta\rangle_M\,.
\end{equation*}
Combining the latter inequality with
\begin{equation*}
\frac{\hat c}{2C}+\vert\xi-\eta\vert_M\ge (1+\vert\xi-\eta\vert_M)\min\left\{1,\frac{\hat c}{2C}\right\}\ge\frac1{\sqrt 2}\min\left\{1,\frac{\hat c}{2C}\right\}\langle\xi-\eta\rangle_M
\end{equation*}
we finally obtain
\begin{equation*}
\langle\xi-\eta\rangle_M<\widehat C\tilde\varepsilon\langle\eta\rangle_M\,,
\end{equation*}
with a suitable constant $\widehat C>0$ independent of $\tilde\varepsilon$, hence $\xi\in X_{[\widehat C\tilde\varepsilon\langle\cdot\rangle_M]}$. From the previous argument, we conclude that the second inclusion \eqref{equiv:2} holds true.
\end{proof}
Essentially the result above tells that a $[\langle\cdot\rangle_M]-$neighborhood of a $M$-conic set $X$ is made by an arbitrary union of open $M$-cones ``outgoing from points of $X\cap\mathbb S_M$, truncated near their vertex".

\begin{remark}\label{rmk:19}
{\rm It is worthwhile noticing that the quasi-homogeneous symbols considered in Definition \ref{def:1} are related to the weighted smooth symbols introduced in Section \ref{appl_sct} by the following inclusion
\begin{equation*}
S^r_M\subset S^r_{1/m_\ast,\langle\cdot\rangle_M}
\end{equation*}
(a similar inclusion being valid for the corresponding classes of local symbols).}
\end{remark}

\subsection{ Example}\label{EX}
For $M=(1,2)$, let us consider in $\mathbb R^2$  the quasi-homogeneous weight function
\begin{equation}\label{q_h_2}
\langle \xi\rangle_{M}=\left(1+\xi_1^2+\xi^4_2 \right)^{1/2}\,.
\end{equation}
We introduce the following operator
\begin{equation}
\begin{split}
P(x,\partial)= x_1\partial_{x_1}+ i\partial_{x_1}-\partial_{x_2^2}^2\,.
\end{split}
\end{equation}

Its symbol $P(x,\xi)=ix_1\xi_1-\xi_1+\xi_2^2$ belongs to the local class $S^1_M(\Omega)$ where $\Omega=\mathbb R^2$.

Introducing the {\it $M$-characteristic set} of $P(x,\partial)$ as
\begin{equation}\label{eqEX4}
\textup{Char}P=\left\{ (x,\xi)\in\mathbb R^2_x\times\mathbb R^2_\xi\setminus\{0\}\, ,\, P(x,\xi)=0\right\},
\end{equation}
we have
\begin{equation*}
\textup{Char}P=\left\{ (0,x_2,\xi_1,\xi_2);\, x_2\in\mathbb R\,,\,\,\xi_1=\xi_2^2\,,\,\,\xi_2\neq 0 \right\}\!=\!\{0\}\times\mathbb R\times\!\!\bigcap\limits_{0<k<1}\!\!(\mathbb R^2\setminus X_k)\,,
\end{equation*}
where
\begin{equation}\label{eqEX2.1}
X_k=\left\{(\xi_1,\xi_2)\in \mathbb R^2\,\,;\,\, \xi_1\le(1-k)\xi_2^2\,\,\textup{or}\,\, \xi_1\ge\frac{1}{1-k}\xi_2^2 \right\}\,,\,\,0<k<1\,.
\end{equation}
Notice also that $P(x,\xi)$ is {\it quasi-homogeneous of degree one}, in the sense that
\begin{equation}\label{QH}
P(x,t^{1/M}\xi)=P(x,t\xi_1,t^{1/2}\xi_2)=tP(x,\xi)\,,\qquad\forall\,t>0\,.
\end{equation}
The properties collected above yield that the symbol $P(x,\xi)$ is $\langle\cdot\rangle_M-$elliptic at every point $x^0=(x^0_1,x^0_2)\in\mathbb R^2$ with $x^0_1\neq 0$; indeed since $\vert P\vert$ does not vanish at each point of the compact set $\{x^0\}\times\mathbb S_M$, being $\mathbb S_M=\{\eta=(\eta_1,\eta_2)\in\mathbb R^2\,:\,\,\eta_1^2+\eta_2^4=1\}$ is the unit $M-$sphere, by continuity
\begin{equation*}
c_0:=\inf\limits_{\eta\in\mathbb S_M}\vert P(x^0,\eta)\vert>0\,.
\end{equation*}
Hence the quasi-homogeneity of $P$ yields for $\vert\xi\vert_M\ge 1$:
\begin{equation*}
\vert P(x^0,\xi)\vert=\vert\xi\vert_M\vert P(x^0,\eta)\vert\ge c_0\vert\xi\vert_M\ge c_0/\sqrt 2\langle\xi\rangle_M\,,
\end{equation*}
where $\eta:=\vert\xi\vert_M^{1/M}\xi\in\mathbb S_M$.

\medskip

By resorting to Proposition \ref{equivalence_prop}, we show now that at every point $x^0=(0,x^0_2)$, with an arbitrary $x^0_2\in\mathbb R$, $P(x,\xi)$ is $[\langle\cdot\rangle_M]-$microlocally elliptic in any set of the family $\{X_k\}_{0<k<1}$ defined by \eqref{eqEX2.1}. So, let us take arbitrary $x^0=(0,x^0_2)$ and $0<k<1$; since $\vert P\vert$ is different from zero and continuous (hence uniformly continuous) on the compact subset $\{x^0\}\times (X_k\cap\mathbb S_M)$ of $T^\circ\mathbb R^n\setminus\textup{Char}P$, some constants $c_k>0$ and $0<\tilde\varepsilon<1$ sufficiently small can be found such that
\begin{equation}
\vert P(x^0,\eta)\vert\ge c_k\,,
\end{equation}
for $\eta$ ranging on the covering of $X_k\cap\mathbb S_M$ made by the open $M-$balls $B_M(\tilde\eta;\tilde\varepsilon)$ centered at points $\tilde\eta$ of $X_k\cap\mathbb S_M$ with radius $\tilde\varepsilon$. Take now an arbitrary point $\xi\in\bigcup\limits_{\tilde\eta\in X_k\cap\mathbb S_M}\Gamma_M(\tilde\eta;\tilde\varepsilon)$ such that $\vert\xi\vert_M>c/\tilde\varepsilon$ with suitable $c>0$; then $\tilde\eta\in X_k\cap\mathbb S_M$ and $t>0$ exist such that $\xi=t^{1/M}\eta$ for some $\eta\in B_M(\tilde\eta;\tilde\varepsilon)$. Since $\vert\tilde\eta\vert_M=1$, we may take $\tilde\varepsilon$ so small that $\vert\eta\vert_M\le\hat c$ for some positive constant $\hat c$ (independent of $\eta$ and $\tilde\eta$). Exploiting again the quasi-homogeneity of $P$ and the quasi-norm $\vert\cdot\vert_M$ we get
\begin{equation*}
\vert P(x^0,\xi)\vert=t\vert P(x^0,\eta)\vert\ge c_kt=\frac{c_k}{\hat c}t\vert\eta\vert_M=\frac{c_k}{\hat c}\vert\xi\vert_M\ge\tilde c_k\langle\xi\rangle_M\,,
\end{equation*}
with suitable $\tilde c_k>0$. Since the set $X_k$ is $M-$conic, in view of Proposition \ref{equivalence_prop} there exists $0<\varepsilon^\prime<\tilde\varepsilon$ (up to shrink $\tilde\varepsilon$ if necessary) such that
\begin{equation*}
(X_k)_{[\varepsilon^\prime\langle\cdot\rangle_M]}\subset\bigcup\limits_{\tilde\eta\in X_k\cap\mathbb S_M}\Gamma_M(\tilde\eta;\tilde\varepsilon)\cap\left\{\vert\xi\vert_M>c/\tilde\varepsilon\right\}\,.
\end{equation*}
This shows that $P$ is microlocally $[\langle\cdot\rangle_M]-$elliptic in $X_k$ at the point $x^0=(0,x^0_2)$.

\medskip
Since $P(x,\xi)\in S^1_M(\mathbb R^2)$ and in view of Remark \ref{rmk:19}, the results of propagation of Fourier Lebesgue singularities for linear an semilinear equations, collected in the preceding Sections \ref{mcl_reg_sct}, \ref{semi-linear_sct}, can be applied to the operator $P(x,\partial)$.

\medskip
Let $u\in\mathcal D^\prime(\mathbb R^2)$ be a solution to the linear equation
\begin{equation}\label{lin_P}
P(x,\partial)u=f(x)\,,
\end{equation}
with a given forcing term $f$. Applying to \eqref{lin_P} the result of Proposition \ref{filter_inclusions} (with $r=1$), we obtain at once that the following inclusions
\begin{equation*}
\Xi_{[\langle\cdot\rangle_M],\,\mathcal FL^p_{s-1,M},\,x^0}f\cap\Sigma_{[\langle\cdot\rangle_M],\,x^0}P\subset\Xi_{[\langle\cdot\rangle_M],\,\mathcal FL^p_{s,M},\,x^0}u\subset\Xi_{[\langle\cdot\rangle_M],\,\mathcal FL^p_{s-1,M},\,x^0}f
\end{equation*}
hold true for all $s\in\mathbb R$ and $p\in[1,+\infty]$.

\medskip
Consider now the following semilinear equation
\begin{equation}\label{eq_semilineare}
P(x,\partial)u+F(x,u,\partial_{x_2}u)=f(x)\,,
\end{equation}
where $F=F(x,\zeta)$ is a nonlinear function of $x=(x_1,x_2)$ and $\zeta=(\zeta_1,\zeta_2)$ fulfilling the regularity assumptions stated in Theorem \ref{semilin_thm}, and $f=f(x)$ some given forcing term. With respect to the quasi-homogeneous weight \eqref{q_h_2}, the order of derivatives of the unknown function $u$ involved in the nonlinearity in \eqref{eq_semilineare} is easily seen to be $\le 1/2$ (that is such derivatives are properly supported operators in $S_M^{l}$ with order $l\le 1/2$). Then we may apply to \eqref{eq_semilineare} the result of Theorem \ref{semilin_thm} (with $r=1$ and $\varepsilon=1/2$) to prove the following statement.
\begin{proposition}
Given $x^0=(x^0_1,x^0_2)\in\mathbb R^2$, $p\in[1,+\infty]$ and $s>\tilde t>\frac{2}{q}+\frac12$, with $\frac1{p}+\frac1{q}=1$, let  $u\in\mathcal FL^p_{\tilde t,\,M,\,{\rm loc}}(x^0)$ be a solution to \eqref{eq_semilineare}.
\begin{itemize}
\item[a.] If $2\tilde t-2-\frac{4}{q}\notin\mathbb Z$ then
\begin{equation}\label{filtro_2}
\Xi_{[\langle\cdot\rangle_M],\,\mathcal FL^p_{s-1,M},\,x^0}f\cap\Sigma_{[\langle\cdot\rangle_M],\,x^0}P\subset\Xi_{[\langle\cdot\rangle_M],\,\mathcal FL^p_{t,M},\,x^0}u\,,
\end{equation}
holds true for all $t\le\min\left\{s,\tilde t+1+\frac12 E\left(2\tilde t-2-\frac{4}{q}\right)\right\}$;
\item[b.] if $2\tilde t-2-\frac{4}{q}\in\mathbb Z$ then the inclusion \eqref{filtro_2} holds true for all $t\le\min\left\{s,\tilde t+\frac12+\frac12 E\left(2\tilde t-2-\frac{4}{q}\right)\right\}$.
\end{itemize}
\end{proposition}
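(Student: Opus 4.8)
The plan is to obtain \eqref{filtro_2} by specializing the filter restatement of Theorem~\ref{semilin_thm} (the inclusion recorded immediately after its proof) to the data of the present example, and then optimizing the resulting threshold over the free exponent $\tau$. First I would record the correspondence: here $\lambda(\xi)=\langle\xi\rangle_M$ with $n=2$ and $M=(1,2)$, the linear operator $P(x,\partial)$ has order $r=1$, and the nonlinearity depends only on $u$ and $\partial_{x_2}u$, whose orders with respect to $\langle\xi\rangle_M$ are $0$ and $\tfrac12$; thus $\varepsilon=\tfrac12$, as noted in the text preceding the statement. The function $F$ satisfies the regularity hypotheses of Theorem~\ref{semilin_thm}, and the microlocal $[\langle\cdot\rangle_M]-$ellipticity of $P$ in each $X_k$ established earlier in this section is exactly what populates the characteristic filter $\Sigma_{[\langle\cdot\rangle_M],x^0}P$. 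Consequently, for every admissible $\tau$ the filter restatement yields the inclusion \eqref{filtro_2} for all $t\le\min\{s,\ \tilde t+(E(\tfrac{\tilde t-r-\tau}{\varepsilon})+2)\varepsilon\}$.

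Next I would pin down the admissible range of $\tau$. By Remark~\ref{rmk:18} (with $\nu=m_\ast=1$ and $n=2$), the requirement $\langle\cdot\rangle_M^{-\tau}\in L^q(\mathbb R^2)$ holds precisely when $\tau>\tfrac2q$. Combined with the constraint $\tau+r-\varepsilon\le\tilde t$ of Theorem~\ref{semilin_thm}, which here reads $\tau\le\tilde t-\tfrac12$, the set of admissible exponents is the half-open interval $\left(\tfrac2q,\ \tilde t-\tfrac12\right]$; this is nonempty exactly under the standing hypothesis $\tilde t>\tfrac2q+\tfrac12$, so the theorem may indeed be invoked.

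The heart of the proof is the optimization over $\tau$: since the $\min\{s,\cdot\}$ with $s$ is unaffected by $\tau$, I would take the union over admissible $\tau$ of the memberships above, which amounts to computing $\sup_{\tau}E\!\left(\tfrac{\tilde t-r-\tau}{\varepsilon}\right)$. With $r=1$ and $\varepsilon=\tfrac12$ the integer-part argument equals $2\tilde t-2-2\tau$, which increases toward the limiting value $2\tilde t-2-\tfrac4q$ as $\tau\downarrow\tfrac2q$. The main obstacle --- and the sole source of the dichotomy in the statement --- is that $E(\cdot)$ is only right-continuous, so the supremum depends on whether this limit is an integer. If $2\tilde t-2-\tfrac4q\notin\mathbb Z$ (case a), then for $\tau$ near $\tfrac2q$ the argument lies in the open interval $\left(E\!\left(2\tilde t-2-\tfrac4q\right),\ 2\tilde t-2-\tfrac4q\right)$, so the supremum equals $E\!\left(2\tilde t-2-\tfrac4q\right)$; if $2\tilde t-2-\tfrac4q\in\mathbb Z$ (case b), the argument approaches this integer strictly from below and the supremum is $E\!\left(2\tilde t-2-\tfrac4q\right)-1$. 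Using $\tilde t>\tfrac2q+\tfrac12$, which forces $2\tilde t-2-\tfrac4q>-1$, I would check that the maximizing $\tau$ genuinely lies in $\left(\tfrac2q,\ \tilde t-\tfrac12\right]$, so the supremum is attained and the bound holds with ``$\le$''.

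Finally, writing $N$ for the supremum just computed and inserting it into $t\le\tilde t+(N+2)\varepsilon$ with $\varepsilon=\tfrac12$, case a produces $t\le\tilde t+1+\tfrac12 E\!\left(2\tilde t-2-\tfrac4q\right)$ and case b produces $t\le\tilde t+\tfrac12+\tfrac12 E\!\left(2\tilde t-2-\tfrac4q\right)$, which are exactly the two thresholds asserted (the $\min\{s,\cdot\}$ surviving in both). This yields the filter inclusion \eqref{filtro_2} in each case and completes the argument.
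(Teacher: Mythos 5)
Your proposal is correct and follows essentially the same route as the paper's proof: both apply Theorem \ref{semilin_thm} with $r=1$, $\varepsilon=\tfrac12$, use Remark \ref{rmk:18} to identify the admissible range $\tau>\tfrac2q$ (together with $\tau\le\tilde t-\tfrac12$), and then let $\tau\downarrow\tfrac2q$, obtaining $E\bigl(2\tilde t-2-2\tau\bigr)=E\bigl(2\tilde t-2-\tfrac4q\bigr)$ in the non-integer case and $E\bigl(2\tilde t-2-\tfrac4q\bigr)-1$ in the integer case. Your explicit optimization over $\tau$ is just a slightly more systematic phrasing of the paper's choice of $\tau$ sufficiently close to $\tfrac2q$.
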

\begin{proof}
For any $\tilde t>\frac{2}{q}+\frac12$, let $\tau>0$ be chosen such that $\tilde t-\frac12\ge\tau>\frac{2}{q}$, then Theorem \ref{semilin_thm} can be directly applied to equation \eqref{eq_semilineare} (where, according to the observations above, it is set $r=1$, $\varepsilon=1/2$, and we also make use of Remark \ref{rmk:18} for $\lambda=\langle\cdot\rangle_M$ and $\tau$ as above) to find that inclusion \eqref{filtro_2} holds true for all $t\le\min\left\{s,\tilde t+1+\frac12E(2\tilde t-2-2\tau)\right\}$. To conclude, it is enough to observe that for $2\tilde t-2-\frac{4}{q}\notin\mathbb Z$ we can take $\tau$ sufficiently close to $\frac{2}{q}$ to have
\begin{equation*}
E\left(2\tilde t-2-2\tau\right)=E\left(2\tilde t-2-\frac{4}{q}\right)\footnote{According to the result of Theorem \ref{semilin_thm}, this corresponds to the best possible range for $t$.};
\end{equation*}
this proves the statement {\it a}.

If, on the contrary, one has $2\tilde t-2-\frac{4}{q}\in\mathbb Z$ then $2\tilde t-2-2\tau<2\tilde t-2-\frac{4}{q}=E(2\tilde t-2-\frac{4}{q})$ whenever $\tau$ is taken as above; then for $\tau$ sufficiently close to $\frac{2}{q}$ we get
\begin{equation*}
E(2\tilde t-2-2\tau)=E\left(2\tilde t-2-\frac{4}{q}\right)-1
\end{equation*}
which gives the statement {\it b}.
\end{proof}

{\small
Gianluca Garello\\
Dipartimento di Matematica\\
Universit\`a di Torino\\
Via Carlo Alberto 10, I-10123 Torino, Italy\\
gianluca.garello@unito.it}
\medskip

\noindent {\small
Alessandro Morando\\
DICATAM-Sezione di Matematica\\
Universit\`a di Brescia\\
Via Valotti 9, I-25133 Brescia, Italy\\
alessandro.morando@unibs.it}

\end{document}